\newcommand{\tnf}[1]{\textnormal{#1}}
\newcommand{\tbf}[1]{\textbf{#1}}
\newcommand{\mbs}[1]{\boldsymbol{#1}}
\newcommand{\mcl}[1]{\mathcal{#1}}
\newcommand{\R}{\mathbb{R}}
\newcommand{\N}{\mathbb{N}}
\newcommand{\norm}[1]{\left\lVert{#1}\right\rVert}
\newcommand{\sign}[0]{\tnf{sgn}}
\newcommand{\bmat}[1]{\begin{bmatrix}#1\end{bmatrix}}
\newcommand{\smallbmat}[1]{\left[{\scriptsize\begin{smallmatrix}
		#1\end{smallmatrix} }\right]}
\newtheorem{thm}{Theorem}
\newtheorem{defn}[thm]{Definition}
\newtheorem{lem}[thm]{Lemma}
\newtheorem{prop}[thm]{Proposition}
\newtheorem{cor}[thm]{Corollary}
\let\bl\bigl
\let\bbl\Bigl
\let\br\bigr
\let\bbr\Bigr
\begin{document}

\title{\LARGE\textbf{Lyapunov Functions can Exactly Quantify Rate Performance of Nonlinear Differential Equations}}
\author{Declan S. Jagt, Matthew M. Peet, \IEEEmembership{Senior Member, IEEE}
	\vspace*{-0.305cm} 
	\thanks{Submitted to IEEE TAC on \today. This work was supported by the National Science Foundation grant CMMI-1931270}
	\thanks{D. S. Jagt and M. M. Peet are with the School for Engineering of Matter, Transport and Energy, Arizona State University, Tempe, AZ 85287 (e-mail: djagt@asu.edu and mpeet@asu.edu)
		%
	}
	\thanks{%
	}
}

\maketitle
\thispagestyle{plain}
\pagestyle{plain}

\begin{abstract}
	Pointwise-in-time stability notions for Ordinary Differential Equations (ODEs) establish bounds on the rate of decay of the system state -- allowing performance to be quantified by the maximum provable decay rate. 
	While Lyapunov tests have been proposed for numerous pointwise-in-time stability notions, including exponential, rational, and finite-time stability, it is unclear whether any of these tests can accurately characterize system performance.\\
	\indent In this paper, we start by proposing a generalized notion of rate performance --- with exponential, rational, and finite-time decay rates being special cases. Then, for any such notion and rate, we associate a Lyapunov condition which is shown to be necessary and sufficient for a system to achieve that rate. Finally, we show how the proposed conditions can be enforced using Sum-of-Squares (SOS) programming in the case of exponential, rational, and finite-time rate performance. Numerical examples in each case demonstrate that the corresponding SOS test can achieve tight bounds on the rate performance with accurate inner bounds on the associated regions of performance.
\end{abstract}

\begin{IEEEkeywords}
	Nonlinear ODEs, Converse Lyapunov Theorems, Sum-of-Squares.
\end{IEEEkeywords}




\section{INTRODUCTION}

Nonlinear Ordinary Differential Equations (ODEs) are a modeling tool for representing physical processes. The premise of this approach is that by studying the properties of solutions of the ODE, we may infer properties of the physical process.

One of the most basic properties that a physical system may possess is stability. Various notions of stability have been proposed to characterize solutions of the system, including Lyapunov stability, asymptotic stability, input-to-state stability, etc; each of which establishes some asymptotic behaviour of the solutions, and each of which can be verified by establishing the existence of a Lyapunov function with certain properties.

However, while Lyapunov theory is well-established for basic properties such as asymptotic stability~\cite{clarke1998smoothaConverseAsymptotic_LF,teel2014converseLF,wei2022converseLF}, class-$\mcl{KL}$ stability notions~\cite{kellet2015converseLFs_survey,teel2000smoothConverseKL_LF,kellett2023TwoMeasureISS} and input-to-state stability properties~\cite{dashkovskiy2011ISS,efimov2024ISS,sontag1999IOSnotions,sontag2000IOS,krichman2001IOSS}, the Lyapunov framework for evaluating \textit{performance} of nonlinear systems is less well-developed. To see this, consider the case of linear ODEs, for which we have a variety of computational tests which can precisely determine performance metrics such as quadratic costs, $H_2$ norm, and $H_\infty$ norm (see e.g.~\cite{boyd1994linear,scherer2000LMIs}). Such quantitative metrics for performance are significant in that they provide bounds on sensitivity to factors such as disturbances, modeling errors, and changes in state. For nonlinear systems, by contrast, there are no computational algorithms which are known to be capable of precisely determining any metric of performance for any subclass of substantially nonlinear systems.

The difficulty in establishing performance of nonlinear systems is threefold.
First, nonlinear systems admit several non-equivalent notions of stability, making it difficult to establish a universal metric for performance which can be quantified for any stable nonlinear system.
For example, consider the notions of exponential and rational stability. A system is exponentially stable if there exists some $M,k>0$ such that all solutions $x(t)$ satisfy $\norm{x(t)}\le M e^{-k t}\norm{x(0)}$, and rationally stable if there exists $M,k>0$ and $p\in \N$ such that $\norm{x(t)}^p\le M \frac{\norm{x(0)}^p}{1+\norm{x(0)}^p kt}$~\cite{bacciotti2005LF_book}. Then, rational stability does not imply exponential stability (consider e.g. $\dot{x}=-x^{3}$) nor does global exponential stability imply global rational stability (even for linear systems e.g. $\dot{x}=-x$). As such, although both exponential and rational stability performance can be quantified by the rate parameter $k$, the resulting metric of exponential rate performance may not be suitable for rationally stable systems, or vice versa.



The second difficulty with establishing a performance metric for nonlinear systems is that existing converse Lyapunov results do not consider system performance. For example, consider again the notions of rate performance for exponential and rational stability. Lyapunov conditions have been established for both exponential and rational stability~\cite{bacciotti2005LF_book}. However, although these Lyapunov conditions are guaranteed to establish \textit{some} lower bound on rate performance (e.g. a particular value of $k$), these bounds are not tight in that the Lyapunov conditions may be unable to verify the \textit{actual} rate performance.

Finally, the third difficulty with establishing performance of nonlinear systems is that, even if we have a necessary and sufficient Lyapunov condition for rate performance, numerically testing this Lyapunov condition may be computationally intractable.
While it is often possible to tighten Lyapunov conditions to Sum-Of-Squares (SOS) constraints -- allowing a polynomial Lyapunov function\footnote{exponential and rational stability of polynomial vector fields can be certified by a polynomial Lyapunov function~\cite{peet2009exponentialStability,peet2012converseSOS_LF,leth2017rational_stability}.} to be found using semidefinite programming~\cite{parrilo2000SOS,ahmadi2011converseSOS} -- establishing such an SOS formulation of the Lyapunov condition is not always trivial.
Specifically, the rational rate performance conditions we obtain are not linear in the Lyapunov function (a decision variable in SOS programming). Additionally, rates for finite-time stability require non-polynomial vector fields and Lyapunov functions. This raises the question of how such conditions might be enforced using SOS without sacrificing accuracy.

Perhaps the most significant progress toward the development of testable performance metrics can be found in~\cite{grune2002KLD_functions,grune2004KLD} under the unassuming name of ``Input to State Dynamic Stability''. Specifically, this work considers a class of ``$\mathcal{KLD}$'' comparison functions which are then associated with certain rates of convergence of solutions (along with bounds on the effect of disturbances). Remarkably, this work was able to establish that for any given rate, there exists a (possibly discontinuous) Lyapunov function capable of establishing this rate. This work was ahead of its time in that it predated much of the development of SOS-based algorithms which might have been used to find such Lyapunov functions and then been applied to specific performance metrics. Indeed, many of the results described in this paper parallel the developments found in~\cite{grune2002KLD_functions,grune2004KLD}, albeit here developed in a more constructive way and focused on the use of SOS algorithms to quantify specific metrics of performance -- including exponential, rational, and finite-time metrics of  performance.

The work of~\cite{grune2002KLD_functions,grune2004KLD} was particularly insightful in that a distinction was made between basic stability (showing that there \textit{exists} some metric of performance) and the evaluation of that performance metric. That is, while traditional Lyapunov conditions establish the existence of a performance metric, the result in~\cite{grune2002KLD_functions,grune2004KLD} provides Lyapunov conditions which can determine what that level of performance actually is (albeit in some region away from the origin).
The goal of this paper, then, is to build on the results from~\cite{grune2002KLD_functions,grune2004KLD} to formulate equivalent Lyapunov conditions for performance of nonlinear systems, including exponentially, rationally, and finite-time stable systems. Computationally, of course, we would also like to be able to find solutions to the proposed Lyapunov characterization using SOS programming.

Before establishing such equivalent characterizations, in Section~\ref{sec:beta_stability}, we first define performance metrics which admit an equivalent Lyapunov characterization. In particular, we focus on generalized notions of rate performance.
To define such rates, we use some of the language of $\mcl{KL}$ stability, wherein for a nonlinear system $\dot x=f(x)$, the solution map $\phi_f(x,t)$ satisfies $\phi_f(x,0)=x$ and $\partial_t\phi_f(x,t)=f(\phi_f(x,t))$ and where for class-$\mcl K$ functions $\alpha_1,\alpha_2$, stability implies existence of a class-$\mcl{KL}$ function $\beta$ such that $\alpha_1(\norm{\phi_f(x,t)})\le \beta(\alpha_2(\norm{x}),t)$~\cite{kellett2014KL}. For simplicity, however, we alter this slightly by fixing $\alpha$ and requiring $\alpha(\norm{\phi_f(x,t)})\le M\beta(\alpha(\norm{x}),kt)$ for some $M$, $k$.
Then, e.g., rational stability may be interpreted as $\mcl{KL}$ stability with additional restriction that $\alpha(y)=y^p$ and $\beta(y,t)=\frac{y}{1+yt}$. Furthermore, for any fixed $\alpha$ and $\beta$, we define rate performance  as the largest $k$ such that $\alpha(\norm{\phi_f(x,t)})\le M\beta(\alpha(\norm{x}),kt)$ for some $M$. However, for this notion of rate performance to be physically meaningful, we must add some additional restriction on the structure of $\beta$. 

First note that, apart from rate performance, an obvious notion of gain performance of $\phi_f$ is the smallest $M$ such that $\alpha(\norm{\phi_f(x,t)})\le M\alpha(\norm{x})$ for all $t\geq 0$.
However, for this gain performance to be consistent with the condition $\alpha(\norm{\phi_f(x,t)})\le M\beta(\alpha(\norm{x}),kt)$, we require that $\beta$ be normalized, so that $\beta(y,0)=y$. This ensures that all notions of stability have the same definition of gain performance. Second, we require time-invariance which ensures
the pointwise-in-time bound does not depend on the history of the state, e.g. if $\beta(y_1,t_{1})=\beta(y_2,t_{2})$, then also $\beta(y_1,t_{1}+t)=\beta(y_2,t_{2}+t)$ for all $t\geq 0$. This also ensures that the ratio $\rho(y):=\frac{\partial_t\beta(y,t)}{\partial_y \beta(y,t)}$ does not vary in time -- and since $\beta$ is normalized, we have $\partial_y \beta(y,0)=1$, so that $\rho(y)=\partial_t\beta(y,t)\vert_{t=0}$ (this implies that time-invariance is also equivalent to the $\mcl{KLD}$ property in~\cite{grune2002KLD_functions}). These conditions are not particularly restrictive and hold for the definitions of exponential, rational, and finite-time stability with: $\beta_{\tnf{e}}(y,t)=ye^{-t}$; $\beta_{\tnf{r}}(y,t)=\frac{y}{1+yt}$ and $\beta_{\tnf{f}}(y,t)=y-t$, respectively. In each case, there is a clear notion of rate of convergence where, for given $\alpha$, a system is exponentially, rationally, or finite-time stable with rate $k$ if it is $\beta$-stable with $\beta(y,t):=\beta_{\tnf{e},\tnf{r},\tnf{f}}(y,kt)$.


Another advantage of normalized, time-invariant $\beta$-metrics is that,
 as will be shown in Subsection~\ref{subsec:LF_characterization_sufficiency}, any such metric may be equivalently characterized using comparison functions. Specifically, any normalized, time-invariant function $\beta$ is generated by a scalar differential equation $\dot y=\rho(y)$ where $\rho(y):=\partial_t \beta(y,t)\vert_{t=0}$. Then for exponential, rational, and finite-time stability with rate $k$, we have $\rho_{\tnf{e}}(y)=-ky$, $\rho_{\tnf{r}}(y)=-ky^2$, and $\rho_{\tnf{f}}(y)=-k$, respectively. This characterization of $\beta$-metrics allows for the ready construction of Lyapunov characterizations as $\dot V\leq k\rho(V)$.

To establish that the proposed class of Lyapunov conditions may be used to precisely characterize performance rates (and gains), in Subsection~\ref{subsec:LF_characterization:necessity} we construct a Kurzweil-Yoshizawa type converse Lyapunov function as $V(x)=\inf_{y\in\mcl{W}(x)}y$ where $\mcl{W}(x):=\{y\in\R_{+}\mid \alpha(\norm{\phi_{f}(x,t)})\leq M\beta(\alpha(\norm{x}),kt),~\forall t\geq 0\}$ (similar to the Lyapunov function from~\cite{grune2002KLD_functions,grune2004KLD}) to show that (for given $\alpha$) the system solution satisfies $\alpha(\norm{\phi_f(x,t)})\le M\beta (\alpha(\norm{x}),kt)$ if and only if there exists a Lyapunov function $V$ satisfying $\alpha(\norm{x})\leq MV(x)\leq M\alpha(\norm{x})$ and $V(\phi_{f}(x,t))\leq \beta(V(x),kt)$ where if $V$ is differentiable, $\dot V(x)\le k\rho(V(x))$. Conditions for differentiability are provided in Thms.~\ref{thm:LF_necessity_diff_FT} and~\ref{thm:LF_necessity_diff}. For exponential, rational, and finite-time stability with rate $k$, we have the equivalent Lyapunov characterizations: $\dot V(x)\le -k V(x)$; $\dot V(x)\le -k V(x)^2$ and $\dot V(x)\le -k $, respectively. 

Having defined rate (and gain) performance, and having equivalently characterized this performance using Lyapunov conditions, in Section~\ref{sec:SOS_tests},
we tighten these conditions to SOS constraints.
For exponential rate performance, the SOS constraints are relatively easy: we require $V$ and $(-\nabla V(x)^Tf(x)-k V(x))$ to be SOS and use a standard Positivstellensatz extension for rates on a semialgebraic domain. For rational and finite-time stability, however, we must slightly manipulate the Lyapunov conditions in order to test them using SOS.

For rational stability, we observe that the condition $\dot V(x)\le-kV(x)^2$ is nonlinear in decision variable $V$. However, since $0\leq V(x)\le \alpha(\norm{x})$, we may instead enforce $\dot V(x)\le -kV(x)\alpha(\norm{x})\le-kV(x)^2$. While this may introduce conservatism, we prove that the obtained rate performance will deviate from the true rate performance by a factor of at most $\frac{1}{M}$, for gain performance $M$. This is a substantial improvement over classical Lyapunov conditions, $\dot{V}(x)\leq -k\alpha(\norm{x})^2$, for which the rate performance may deviate by a factor of $\frac{1}{M^2}$.

For finite-time stability, the polynomial bounds on $\dot{V}$ are problematic since the vector field $f$ will not be polynomial. In this case, however, we may make a change of variables $\tilde{V}(z)=V(z(x))$ and $\tilde{f}(z)=f(z(x))|z(x)|^{1-q}$ for $z(x)=\text{sign}(x)|x|^{q}$ such that $\dot{\tilde{V}}=\nabla\tilde{V}^T \tilde{f}$ is polynomial for $\tilde{V}$ polynomial.

For exponential, rational, and finite-time stability, extensive numerical testing is performed in Section~\ref{sec:examples} to evaluate the accuracy of the proposed rate and gain performance characterization of several nonlinear systems. The bounds on rate performance are compared with numerical simulation-based estimates to verify accuracy.
For the case of rational stability, the bounds are also compared to lower bounds on rate performance obtained with standard Lyapunov conditions for rational stability, verifying that the proposed Lyapunov conditions substantially reduce conservatism, and adhere to the conjectured $\frac{1}{M}$ scaling rule.
Finally, we examine the effect of the domain size on degradation of the performance measure.

\section{Notation}%
Let $\R_{+}:=[0,\infty)$.
We say that $\alpha:\R\to\R_{+}$ is monotonically decreasing or monotonically nonincreasing if $y_{1}>y_{2}$ implies $\alpha(y_{1})<\alpha(y_{2})$ or $\alpha(y_{1})\leq\alpha(y_{2})$, respectively. We say that $\alpha:\R^{n}\to\R_{+}$ is positive definite if $\alpha(x)=0$ if and only if $x=0$, and coercive if $\norm{x}_{2}\to\infty$ implies $\alpha(x)\to\infty$.
For $p>0$, denote by $\|.\|_{p}$ the $p$-(quasi)norm on $\R^{n}$.
For a vector field $f:\Omega\to\R^{n}$ on some domain $\Omega\subseteq\R^{n}$, let $\phi_{f}:\Omega\times\R\to\R^{n}$ denote the solution map to the corresponding ODE, so that 
\begin{equation*}
	\partial_{t}\phi_{f}(x,t)=f(\phi_{f}(x,t)),\quad \phi_{f}(x,0)=x,\quad \forall x\in \Omega,\,t\in\R_{+}.
\end{equation*}
Throughout the paper, we assume the solution map $\phi_{f}$ to be well-defined, unique, and jointly continuous in $x$ and $t$. Such well-posedness can be guaranteed using relatively mild smoothness conditions on $f$, see 
e.g. Cors.~3 and 4  in Sec.~7 of~\cite{arnold1992ODEs}.
An exception is made for finite-time stable systems where uniqueness at the origin is relaxed using the weakened conditions provided in~\cite{bhat2000FiniteTimeStability}, Section 2.
Under these conditions, the solution map $\phi_{f}$ satisfies the semigroup property
\begin{equation*}
	\phi_{f}(\phi_{f}(x,t),s)=\phi_{f}(x,t+s),\qquad \forall t,s\in\R_{+}.
\end{equation*}
We say that $G\subseteq\Omega$ is forward invariant for $f$ if $\phi_{f}(x,t)\in G$ for all $x\in G$ and $t\geq 0$.


\section{Characterizing Pointwise-in-Time Stability}\label{sec:beta_stability}

The main technical result of this paper is a converse Lyapunov theorem which provides an equivalent Lyapunov characterization of any suitably well-posed notion of pointwise-in-time rate performance. To that end, our first goal is to define what constitutes a well-posed metric of pointwise-in-time rate performance. In this section, we suppose that we are presented with a time-invariant vector field, $f$, with associated solution map, $\phi_f:\Omega \times \R_{+} \rightarrow \Omega$ where $\Omega \subset \R^n$ is forward invariant. A pointwise-in-time stability criterion is one which bounds some measure, $\alpha_1:\Omega\rightarrow \R_{+}$, of the state of the system, $\phi_{f}(x,t)$, in terms of some measure, $\alpha_2:\Omega\rightarrow \R_{+}$, of the size of the initial condition, $x$ --- and furthermore, that this bound (defined by $\beta:\R_{+} \times \R_{+} \to \R_{+}$) holds at every point in time, so that
\begin{equation}\label{eq:PTS_bound}
	\alpha_1(\phi_{f}(x,t))\leq \beta(\alpha_2(x),t),\quad \forall x\in \Omega,~t\geq 0.
\end{equation}
This definition is essentially the same as the concept of stability with respect to 2-measures~\cite{kellett2023TwoMeasureISS}. The only difference is that we have not yet imposed the constraint that $\beta$ be class $\mcl{KL}$, although this will be imposed implicitly.

Having separated the measures of state, $\alpha_1,\alpha_2$, from the pointwise bound, $\beta$, we note that for any given notion of pointwise-in-time stability these measures and bounds are not uniquely defined. Indeed, as discussed in~\cite{grune1999asymptotic_exponential,jongeneel2024asymptotic_exponential}, even for fixed $\beta$, exponential and asymptotic stability are equivalent modulo a change in $\alpha$. Furthermore, even with class $\mcl K, \mcl{KL}$ restrictions, there is significant ambiguity in $\alpha_{i},\beta$. To illustrate, for the notion of exponential stability, $\norm{\phi_{f}(x,t)}_2 \le Me^{-k t} \norm{x}_2$, the pointwise-in-time bound can be equivalently characterized by several definitions of $\alpha_1,\alpha_2,\beta$, including e.g. $\alpha_1(x)=\norm{x}_2$, $\alpha_2(x)=M\norm{x}_2$ and $\beta(y,t)=ye^{-k t}$, or, $\alpha_1(x)=\norm{x}^4_2$, $\alpha_2(x)=\norm{x}^4_2$ and $\beta(y,t)=M^4 ye^{-4k t}$.
While we do not claim to propose any method for uniquely defining $\alpha_{i},\beta$, we do claim that this ambiguity allows some flexibility to impose certain structure on $\alpha_{i}$ and $\beta$ which can then be used to define quantifiable notions of rate and gain performance.

To start, we will assume that $\alpha_{1}(x)=\frac{1}{M}\alpha_{2}(x)=\alpha(x)$ for some $M\geq 1$. This will allow us to define a notion of gain performance for a given choice of  $\alpha$ as the smallest value of $M\geq 1$ such that $\alpha(\phi_{f}(x,t))\leq M\alpha(x)$ for all $t\geq 0$. Note that this also implies that gain performance is not explicitly a function of the domain size, and allows for gain performance measures which hold globally (e.g. when the system is linear). Now, because we also want this definition of gain performance to be consistent with $\beta$, we further impose a normalization condition that $\beta(y,0)=y$ -- indicating that $\beta$ is not, itself, a measure of initial condition or state but rather bounds the decay rate of the measures of state defined by $\alpha$.
Finally, we also presume that the notion of stability is time-invariant or memoryless -- implying that $\beta$ is strictly a measure of the state and does not depend on the history of the state.
This definition is formally stated as follows.

\begin{defn}\label{defn:time-invariant}
	We say that $\beta:\R_{+} \times \R_{+}\to\R_{+}$ is \tbf{normalized} if $\beta(y,0)=y$, and \tbf{time-invariant} if $\beta$ is continuous and for all $y,z,t_{0}\geq0$, $\beta(y,t_{0})=\beta(z,0)$ implies $\beta(y,t_{0}+t)=\beta(z,t)$ for all $t\geq 0$.
\end{defn}

\begin{figure}[t]
	\centering
	\includegraphics[width=1.0\linewidth]{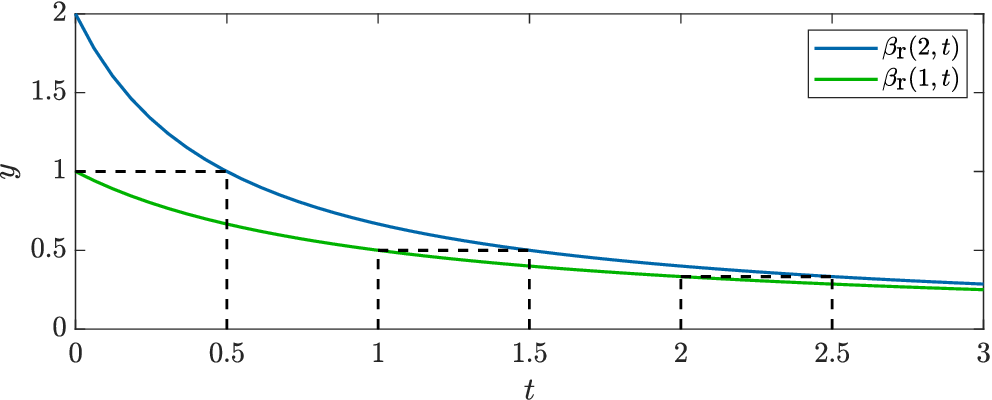}
	\vspace*{-0.5cm}
	\caption{Rational stability bound $\beta_{\tnf{r}}(y,t):=\frac{y}{1+yt}$ for $y=2$ and $y=1$. Since $\beta_{\tnf{r}}(2,0.5)=\beta_{\tnf{r}}(1,0)$, time-invariance of $\beta_{\tnf{r}}$ implies that also $\beta_{\tnf{r}}(2,t+0.5)=\beta_{\tnf{r}}(1,t)$ for all $t\geq 0$.}
	\label{fig:rational_stability}	
	\vspace*{-0.3cm}
\end{figure}

The time-invariance property is illustrated in Fig.~\ref{fig:rational_stability} for the function $\beta_{\tnf{r}}(y,t):=\frac{y}{1+yt}$, which we will use to characterize rational stability in Subsection~\ref{subsec:beta_stability:examples}.
A normalized, time-invariant function, $\beta$, can now be used to define the notion of $\beta$-stability as follows.

\begin{defn}\label{defn:betastability}
For given $\alpha:\R^{n}\to\R_{+}$ and normalized, time-invariant $\beta:\R_{+}\times\R_{+}\to\R_{+}$, we say that $f:\Omega \rightarrow \R^n$ (with associated solution map $\phi_f$) is $\boldsymbol{\beta}$\tbf{-stable} on $G \subseteq \Omega$ with respect to $\alpha$ and with rate $k\geq 0$ and gain $M\geq 1$ if\\[-0.75\baselineskip]
\begin{equation*}
	\alpha(\phi_{f}(x,t))\leq M\beta(\alpha(x),kt),\qquad \forall x\in G,~t\geq 0.
\end{equation*}
\end{defn}
\vspace*{2mm}

Naturally, the notion of $\beta$-stability motivated here is strongly related to traditional notions of $\mcl{KL}$ stability, stability with respect to 2 measures, and $\mcl{KLD}$ stability. The primary distinction, of course, is that for given $\alpha_1,\alpha_2$, $\mcl{KL}$ stability requires $\alpha_{1}(\phi_{f}(x,t))\leq\beta(\alpha_{2}(x),t)$ for \textit{some} $\beta$, whereas Defn.~\ref{defn:betastability} requires this bound to hold for \textit{given} $\beta$. As a result, in the $\mcl{KL}$ framework, there is no clear distinction between different notions of stability, in that strategic choices of the measures $\alpha_{i}$ (potentially not differentiable at $x=0$) allow for equivalence between e.g. rational and exponential stability (by~\cite{sontag1998IISS}, Prop.~7).
The proposed notion of $\beta$-stability avoids such ambiguity, by requiring both the measures $\alpha_{i}$ and the function $\beta$ to be fixed (with $\alpha_{1}=\frac{1}{M}\alpha_{2}$).
This distinction also allows us to define rate and gain performance of the system.

The second distinction between $\beta$-stability and $\mcl{KL}$ stability lies in the characterization of the $\beta$ function as normalized and time-invariant. In particular, note that a class-$\mcl{KL}$ function may not be either normalized (e.g. $\beta(y,t)=2ye^{-t}$) or time-invariant (e.g. $\beta(y,t)=\frac{y}{1+t}$). Conversely, however, any normalized $\beta$ is of class $\mcl{K}$ at $t=0$, since $\beta(y,0)=y$ is monotonically increasing and $\beta(0,0)=0$. If $\beta$ is also time-invariant, this then implies that $\beta(y,t)$ is nondecreasing in $y$ for all $t\ge 0$, as seen in the following lemma.

\begin{lem}\label{lem:radially_increasing}
	If $\beta$ is normalized and time-invariant, then $\beta(y,t)$ is monotonically nondecreasing in $y$ for all $t\geq 0$.
\end{lem}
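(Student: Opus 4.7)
The plan is to argue by contradiction, using continuity of $\beta$ (built into the time-invariance definition), the intermediate value theorem, and the time-invariance property to collapse any putative monotonicity violation into an equality that contradicts the assumed violation.

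First I would fix $t \geq 0$ and $y_1, y_2$ with $y_1 < y_2$ (the case $y_1 = y_2$ is trivial), and suppose for contradiction that $\beta(y_1, t) > \beta(y_2, t)$. At $s = 0$ the normalization gives $\beta(y_1, 0) - \beta(y_2, 0) = y_1 - y_2 < 0$, while at $s = t$ the contradiction hypothesis gives $\beta(y_1, t) - \beta(y_2, t) > 0$. Since $\beta$ is continuous by the definition of time-invariance, the intermediate value theorem yields some $s^\star \in (0, t)$ at which $\beta(y_1, s^\star) = \beta(y_2, s^\star)$; call this common value $c$.

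Next I would exploit normalization to rewrite $\beta(y_i, s^\star) = c = \beta(c, 0)$ for $i = 1, 2$. Time-invariance (applied twice, once with $(y, z, t_0) = (y_1, c, s^\star)$ and once with $(y_2, c, s^\star)$) then gives $\beta(y_1, s^\star + \tau) = \beta(c, \tau) = \beta(y_2, s^\star + \tau)$ for every $\tau \geq 0$. Choosing $\tau = t - s^\star \geq 0$ produces $\beta(y_1, t) = \beta(y_2, t)$, contradicting the assumption $\beta(y_1, t) > \beta(y_2, t)$. Hence $\beta(y_1, t) \leq \beta(y_2, t)$.

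The main subtlety, and what I would spend a moment verifying, is simply that the IVT applies: namely that continuity of $\beta$ (in $t$, or jointly) is indeed built into Definition~\ref{defn:time-invariant}. Given that, the rest is a short syllogism: \emph{any} crossing point for two trajectories $s \mapsto \beta(y_i, s)$ must, by time-invariance plus normalization, merge them from that point onward, ruling out the possibility that $\beta(y_1, \cdot)$ overtakes $\beta(y_2, \cdot)$ anywhere. No further structural assumptions on $\beta$ (differentiability, class $\mcl{K}$, etc.) are needed.
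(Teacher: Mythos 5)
Your proof is correct and follows essentially the same argument as the paper: a contradiction via the intermediate value theorem to find a crossing time, then time-invariance to force the two trajectories to agree from that time onward. Your extra step of routing the time-invariance application through the common value $c=\beta(c,0)$ is just a slightly more explicit invocation of Definition~\ref{defn:time-invariant} than the paper's one-line appeal.
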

\begin{proof}
	Suppose for contradiction that there exists some $t\geq 0$ and $y_{1}< y_{2}$ such that $\beta(y_{1},t)>\beta(y_{2},t)$. Since $\beta$ is normalized, $\beta(y_{1},0)=y_{1}<y_{2}=\beta(y_{2},0)$, and hence (by continuity of $\beta$) there exists some $t^*\in(0,t)$ such that $\beta(y_{1},t^*)=\beta(y_{2},t^*)$. Since $\beta$ is time-invariant,  $\beta(y_{1},t^*+s)=\beta(y_{2},t^*+s)$ for all $s\geq 0$. Choosing $s=t-t^*>0$, this implies $\beta(y_{1},t)=\beta(y_{2},t)$, posing a contradiction.  
\end{proof}

Given the proposed notion of $\beta$-stability, in the following subsection, we show how several classical and quantifiable stability notions can be defined by normalized and time-invariant functions $\beta$.

\vspace*{-0.2cm}
\subsection{Illustrations of $\beta$-Stability}\label{subsec:beta_stability:examples}

In this subsection, we examine several basic pointwise-in-time stability notions and show how the $\beta$-stability framework is used to define associated metrics of performance. In particular, we consider exponential, rational, and finite-time stability.

\subsubsection{Exponential Stability}

Perhaps the most well-studied quantifiable stability notion is that of exponential stability.
\begin{defn}\label{defn:exponential_stability}
	For given $G\subseteq\Omega$, we say that the vector field $f\in\Omega\to\R^{n}$ is \tbf{exponentially stable} on $G$ with rate $k\geq 0$ and gain $M\geq 1$ if\\[-0.95\baselineskip]
	\begin{equation*}
		\norm{\phi_{f}(x,t)}_{2}\leq Me^{-kt}\norm{x}_{2},\qquad \forall x\in G,~t\geq 0.
	\end{equation*}
\end{defn}
\vspace*{2mm}

This definition of exponential stability can be readily expressed in the $\beta$-stability framework as follows.

\begin{lem}\label{lem:exponential_stability}
Let $\alpha(x):=\norm{x}_{2}$ and $\beta_{\tnf{e}}(y,t):=ye^{-t}$. Then $\beta_{\tnf{e}}$ is normalized and time-invariant, and $f$ is exponentially stable on $G$ with rate $k$ and gain $M$ if and only if it is $\beta_{\tnf{e}}$-stable on $G$ with respect to $\alpha$ and with rate $k$ and gain $M$.
\end{lem}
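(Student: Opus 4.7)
The statement decomposes into three independent verifications, so my plan is to handle them in sequence, each by direct substitution into Definitions~\ref{defn:time-invariant} and~\ref{defn:betastability}.

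First I would check that $\beta_{\tnf{e}}$ is normalized: substitution gives $\beta_{\tnf{e}}(y,0)=y e^{0}=y$, which is immediate. Next I would verify time-invariance. Fix $y,z,t_{0}\geq 0$ with $\beta_{\tnf{e}}(y,t_{0})=\beta_{\tnf{e}}(z,0)$, i.e. $ye^{-t_{0}}=z$. Using the multiplicative identity $e^{-(t_{0}+t)}=e^{-t_{0}}e^{-t}$, I would compute
\[
\beta_{\tnf{e}}(y,t_{0}+t)=y e^{-t_{0}}e^{-t}=z e^{-t}=\beta_{\tnf{e}}(z,t),
\]
which establishes the required identity for all $t\geq 0$. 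Continuity of $\beta_{\tnf{e}}$ is obvious, so the time-invariance clause of Defn.~\ref{defn:time-invariant} is satisfied.

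Finally, to prove the equivalence, I would plug the choices $\alpha(x)=\norm{x}_{2}$ and $\beta_{\tnf{e}}(y,t)=ye^{-t}$ into the $\beta$-stability bound~\eqref{eq:beta_bound} and observe that it reads
\[
\norm{\phi_{f}(x,t)}_{2}\leq M\,\beta_{\tnf{e}}(\norm{x}_{2},kt)=M e^{-kt}\norm{x}_{2},\quad \forall x\in G,~t\geq 0,
\]
which is verbatim the condition of Defn.~\ref{defn:exponential_stability}. Since the two inequalities coincide pointwise in $x$ and $t$, the ``if and only if'' is immediate in both directions, with no constants to track.

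There is essentially no obstacle here: the entire lemma amounts to unfolding the definitions and invoking the semigroup property $e^{-(s+t)}=e^{-s}e^{-t}$ of the exponential. The lemma's role is expository, illustrating how the $\beta$-stability template of Defn.~\ref{defn:betastability} specializes to a classical notion; the only care needed is to keep the rate $k$ inside the second argument of $\beta_{\tnf{e}}$ rather than folding it into the first, so that the gain $M$ and rate $k$ of the $\beta$-stability formulation coincide exactly with the gain and rate in Defn.~\ref{defn:exponential_stability}.
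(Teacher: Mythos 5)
Your proof is correct and matches the paper's argument step for step: direct verification of normalization, the same one-line computation $ye^{-t_0}e^{-t}=ze^{-t}$ for time-invariance, and an observation that the two stability inequalities coincide term by term. No meaningful difference in approach or emphasis.
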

\begin{proof}
First, $\beta_{\tnf{e}}$ is normalized since $\beta_{\tnf{e}}(y,0)=y$, and time-invariant since $\beta_{\tnf{e}}(y,t_{0})=ye^{-t_0}=z$ implies
\begin{equation*}
	\beta_{\tnf{e}}(y,t_{0}+t)=ye^{-(t_{0}+t)}=ye^{-t_{0}}e^{-t}=ze^{-t}=\beta_{\tnf{e}}(z,t).
\end{equation*}
Since $\alpha(\phi_{f}(x,t))=\norm{\phi_{f}(x,t)}_{2}$ and $ M\beta_{\tnf{e}}(\alpha(x),kt)=Me^{-kt}\norm{x}_{2}$, equivalence of exponential stability and $\beta_{\tnf{e}}$-stability follows immediately.
\end{proof}
\vspace*{2mm}

\subsubsection{Rational Stability}

Exponential stability is relatively uncommon for systems which are substantially nonlinear (e.g. $\dot x=-x^3$). A more common property of nonlinear systems is that of rational stability (also called weakly intensive behaviour in~\cite{hahn1963LFs}), which can be formulated as follows.

\begin{defn}\label{defn:rational_stability}
	Given $G\subseteq\Omega$, and $p\in \N$, we say that the vector field $f:\Omega\to\R^{n}$ is \tbf{rationally stable} on $G$ with rate $k\ge 0$ and gain $M\ge 1$ if\\[-0.75\baselineskip]
	\begin{equation*}
		\norm{\phi_{f}(x,t)}_{2}^{p}\leq M\frac{\norm{x}_{2}^{p}}{1+\norm{x}_{2}^{p}kt} ,\quad \forall x\in G,~t\geq 0.
	\end{equation*}
\end{defn}
\vspace*{2mm}

Rational stability can be associated with performance metrics using the $\beta$-stability framework as follows.

\begin{lem}\label{lem:rational_stability}
	Let $p\in\N$, $\alpha(x):=\norm{x}_{2}^{p}$, and $\beta_{\tnf{r}}(y,t):=\frac{y}{1+yt}$.
	Then $\beta_{\tnf{r}}$ is normalized and time-invariant, and $f$ is rationally stable on $G$ with rate $k$ and gain $M$ if and only if it is $\beta_{\tnf{r}}$-stable on $G$ with respect to $\alpha$ and with rate $k$ and gain $M$.
\end{lem}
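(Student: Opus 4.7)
The plan is to mirror the structure of the proof of Lemma~\ref{lem:exponential_stability}, handling the two claims separately: first establishing that $\beta_{\tnf{r}}$ is normalized and time-invariant, then showing that the rational stability bound in Defn.~\ref{defn:rational_stability} coincides term-by-term with the $\beta_{\tnf{r}}$-stability bound in Defn.~\ref{defn:betastability} under the prescribed choice of $\alpha$.

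For normalization, I would simply evaluate $\beta_{\tnf{r}}(y,0)=\frac{y}{1+0}=y$. For time-invariance, given $z=\beta_{\tnf{r}}(y,t_{0})=\frac{y}{1+yt_{0}}$, I would substitute into $\beta_{\tnf{r}}(z,t)$ and carry out a short algebraic simplification of the form
\begin{equation*}
	\beta_{\tnf{r}}(z,t)=\frac{z}{1+zt}=\frac{\frac{y}{1+yt_{0}}}{1+\frac{yt}{1+yt_{0}}}=\frac{y}{1+y(t_{0}+t)}=\beta_{\tnf{r}}(y,t_{0}+t),
\end{equation*}
thereby verifying the defining identity. Continuity of $\beta_{\tnf{r}}$ on $\R_{+}\times\R_{+}$ is obvious since the denominator $1+yt$ never vanishes for $y,t\geq 0$.

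For the equivalence, I would observe that with $\alpha(x)=\norm{x}_{2}^{p}$, the left-hand side of the rational stability bound is precisely $\alpha(\phi_{f}(x,t))$, while the right-hand side rewrites as
\begin{equation*}
	M\frac{\norm{x}_{2}^{p}}{1+\norm{x}_{2}^{p}kt}=M\frac{\alpha(x)}{1+\alpha(x)kt}=M\beta_{\tnf{r}}(\alpha(x),kt).
\end{equation*}
This identification immediately yields the iff statement: rational stability of $f$ on $G$ with rate $k$ and gain $M$ is the same inequality as $\beta_{\tnf{r}}$-stability on $G$ with respect to $\alpha$ at the same rate and gain.

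Overall the argument is essentially bookkeeping, with no serious obstacle. The only nontrivial step is the algebraic verification of time-invariance, where one must be careful in clearing the compound fraction $\frac{y/(1+yt_{0})}{1+yt/(1+yt_{0})}$ to recover the canonical form $y/(1+y(t_{0}+t))$; everything else is a direct substitution.
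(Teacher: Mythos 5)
Your proposal is correct and takes essentially the same approach as the paper: verify that $\beta_{\tnf{r}}(y,0)=y$, verify time-invariance by direct algebraic substitution, and then identify the rational-stability bound with $M\beta_{\tnf{r}}(\alpha(x),kt)$ term-by-term. The only cosmetic difference is that the paper checks time-invariance via the reciprocal form $\beta_{\tnf{r}}(y,t_{0})=(y^{-1}+t_{0})^{-1}$ and treats $y=0$ as a separate case, whereas your compound-fraction computation handles $y=0$ without a case split.
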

\begin{proof}
	First, $\beta_{\tnf{r}}$ is normalized, since $\beta_{\tnf{r}}(y,0)=y$. Furthermore, for all $y>0$ and $t_{0}\geq 0$, we have $\beta_{\tnf{r}}(y,t_{0})=(y^{-1}+t_{0})^{-1}$. Therefore, if $y>0$, $\beta_{\tnf{r}}(y,t_{0})=z$ implies that
	\begin{align*}
		\beta_{\tnf{r}}(y,t_{0}+t)&=(y^{-1}+[t+t_{0}])^{-1}	\\
		&
		=(\beta_{\tnf{r}}(y,t_{0})^{-1}+t)^{-1}=\beta_{\tnf{r}}(z,t),\quad \forall t\geq 0.
	\end{align*}
	Alternatively, if $y=0$, then $z=\beta_{\tnf{r}}(y,t_{0})=0$ implies $\beta_{\tnf{r}}(y,t_{0}+t)=0=\beta_{\tnf{r}}(z,t)$ for all $t\geq 0$.
	It follows that $\beta_{\tnf{r}}$ is time-invariant.
	Finally, equivalence holds since $\alpha(\phi_{f}(x,t))=\norm{\phi_{f}(x,t)}_{2}^{p}$ and $M\beta_{\tnf{r}}(\alpha(x),kt)=M\norm{x}_{2}^{p}/(1+\norm{x}_{2}^{p}kt)$.
\end{proof}
\vspace*{2mm}

\subsubsection{Finite-Time Stability}

Unlike exponential and rational stability (where convergence is asymptotic), finite-time stable systems reach an equilibrium in finite time. Although finite-time stability is typically characterized by a ``settling time function'', $T(x)$, where $\lim_{t\rightarrow T(x)}\phi_f(x,t)=0$, we suppose that performance of finite-time stable systems may also be characterized by a growth bound on $T(x)$ as $T(x)\le \frac{1}{k}\norm{x}^\eta_p$. This leads to the following definition.

\begin{defn}\label{defn:finite_time_stability}
	For given $G\subseteq\Omega$ and $p,\eta>0$, we say that the vector field $f:\Omega\to\R^{n}$ is \tbf{finite-time stable} on $G$ with rate $k\ge 0$ and gain $M\ge 1$ if\\[-0.75\baselineskip]
	\begin{equation*}
		\norm{\phi_{f}(x,t)}_{p}^{\eta}\leq \begin{cases}
			M \bl(\norm{x}_{p}^{\eta}-kt\br),	&	t\leq \frac{1}{k}\norm{x}_{p}^{\eta},\\
			0,		&	\tnf{else},
		\end{cases},\quad \forall x\in G.
	\end{equation*}
\end{defn}
\vspace*{2mm}

Finite-time stability and performance metrics are now expressed using the $\beta$-stability framework as follows.

\begin{lem}\label{lem:finite_time_stability}
	For given $p,\eta>0$, let $\alpha(x):=\norm{x}_{p}^{\eta}$ and $\beta_{\tnf{f}}(y,t):=\max\{y-t,0\}$.
	Then $\beta_{\tnf{f}}$ is normalized and time-invariant, and the vector field $f$ is finite-time stable on $G$ with rate $k$ and gain $M$ if and only if it is $\beta_{\tnf{f}}$-stable on $G$ with respect to $\alpha$ and with rate $k$ and gain $M$.
\end{lem}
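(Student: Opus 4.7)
The plan is to mirror the structure used in the proofs of Lemmas~\ref{lem:exponential_stability} and~\ref{lem:rational_stability}: first verify that $\beta_{\tnf{f}}$ satisfies the normalization and time-invariance axioms of Defn.~\ref{defn:time-invariant}, then obtain equivalence by direct substitution using $\alpha(\phi_f(x,t))=\norm{\phi_f(x,t)}_p^\eta$. Normalization is immediate since $\beta_{\tnf{f}}(y,0)=\max\{y,0\}=y$ for $y\in\R_{+}$, and continuity of $\beta_{\tnf{f}}$ on $\R_{+}\times\R_{+}$ follows from continuity of the max of two continuous functions.

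For time-invariance, I would fix $y,t_{0}\geq0$, set $z=\beta_{\tnf{f}}(y,t_{0})$, and verify $\beta_{\tnf{f}}(y,t_{0}+t)=\beta_{\tnf{f}}(z,t)$ by splitting on the sign of $y-t_{0}$. In the case $y\geq t_{0}$, the identity $z=y-t_{0}$ gives $\beta_{\tnf{f}}(z,t)=\max\{y-t_{0}-t,0\}=\beta_{\tnf{f}}(y,t_{0}+t)$ directly. In the case $y<t_{0}$, one has $z=0$ and $t_{0}+t>y$, so $\beta_{\tnf{f}}(y,t_{0}+t)=0=\max\{-t,0\}=\beta_{\tnf{f}}(z,t)$. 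Combining these two subcases yields time-invariance.

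For equivalence, I would argue as follows. If $f$ is $\beta_{\tnf{f}}$-stable with rate $k$ and gain $M$, then $\norm{\phi_f(x,t)}_p^\eta=\alpha(\phi_f(x,t))\leq M\max\{\norm{x}_p^\eta-kt,0\}$, which implies $\norm{\phi_f(x,t)}_p^\eta\leq M(\norm{x}_p^\eta-kt)$ whenever the right-hand side is nonnegative, and forces $\phi_f(x,t)=0$ once $kt\geq\norm{x}_p^\eta$; in either regime the defining inequality of Defn.~\ref{defn:finite_time_stability} holds. Conversely, if $f$ is finite-time stable with rate $k$ and gain $M$, then for $kt\leq\norm{x}_p^\eta$ the right-hand side of Defn.~\ref{defn:finite_time_stability} agrees with $M\beta_{\tnf{f}}(\alpha(x),kt)$, while for $kt>\norm{x}_p^\eta$ the inequality $\norm{\phi_f(x,t)}_p^\eta\leq M(\norm{x}_p^\eta-kt)<0$ together with nonnegativity of the norm forces $\phi_f(x,t)=0$, and hence $\alpha(\phi_f(x,t))=0=M\beta_{\tnf{f}}(\alpha(x),kt)$.

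The only mildly delicate step is the equivalence argument, specifically reconciling the ``$\forall t\geq 0$'' quantifier in Defn.~\ref{defn:finite_time_stability} (where the right-hand side may become negative) with the truncated expression $\max\{\cdot,0\}$ appearing in $\beta_{\tnf{f}}$. I expect this to be the main obstacle only in the sense that one must argue that the seemingly vacuous (or contradictory) regime where $kt>\norm{x}_p^\eta$ in fact enforces $\phi_f(x,t)=0$, which then matches the clipped behaviour of $\beta_{\tnf{f}}$; the rest of the proof is routine.
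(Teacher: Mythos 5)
Your normalization and time-invariance arguments are correct and follow the same case split as the paper's own proof, so those parts are fine. The equivalence argument, however, contains a genuine gap precisely at the point you flagged. In the forward direction you claim that once $\beta_{\tnf{f}}$-stability forces $\phi_f(x,t)=0$ for $kt\geq\norm{x}_p^\eta$, ``in either regime the defining inequality of Defn.~\ref{defn:finite_time_stability} holds'' --- but it does not: for $kt>\norm{x}_p^\eta$ the right-hand side $M(\norm{x}_p^\eta-kt)$ is strictly negative, so even $\norm{\phi_f(x,t)}_p^\eta=0$ violates the inequality. In the converse direction you say the inequality $\norm{\phi_f(x,t)}_p^\eta\leq M(\norm{x}_p^\eta-kt)<0$ together with nonnegativity of the norm ``forces $\phi_f(x,t)=0$''; what it actually forces is a contradiction, not a conclusion about $\phi_f(x,t)$. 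Taken literally, Defn.~\ref{defn:finite_time_stability} with $k>0$ is infeasible for any $x\in G\setminus\{0\}$, since the unclipped right-hand side eventually goes negative, so the two notions cannot be reconciled by arguing inside the $kt>\norm{x}_p^\eta$ regime.

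The paper's own proof side-steps this by simply asserting ``equivalence is trivial since $\ldots\ M(\norm{x}_{p}^{\eta}-kt)=M\beta_{\tnf{f}}(\alpha(x),kt)$,'' an identity that only holds for $kt\leq\norm{x}_p^\eta$. The authors are implicitly reading Defn.~\ref{defn:finite_time_stability} with the right-hand side truncated at zero (as must be intended, since the untruncated version is unsatisfiable), and under that reading the equivalence really is a one-line substitution exactly as in Lem.~\ref{lem:exponential_stability} and Lem.~\ref{lem:rational_stability}. You were right to sense that something needed reconciling, but the correct resolution is to note that the truncation in Defn.~\ref{defn:finite_time_stability} is implicit and then match terms syntactically; trying to derive the unclipped inequality from $\beta_{\tnf{f}}$-stability cannot work.
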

\begin{proof}
	First, $\beta_{\tnf{f}}$ is normalized since $\beta_{\tnf{f}}(y,0)=y$. For time-invariance, let $y,t_{0}\geq 0$ and  $z=\beta_{\tnf{f}}(y,t_{0})$. In the case $t_{0}\geq y$, we have $z=0$, and hence $\beta_{\tnf{f}}(y,t_{0}+t)=0=\beta_{\tnf{f}}(z,t)$ for all $t\geq 0$. Otherwise, if $t_{0}<y$, then for $t\in[0,z)$
	\begin{equation*}
		\beta_{\tnf{f}}(y,t_{0}+t)
		=y-[t+t_{0}]
		=\beta_{\tnf{f}}(y,t_{0})-t	
		=\beta_{\tnf{f}}(z,t),
	\end{equation*}
	and for $t\geq z$, we again have $\beta_{\tnf{f}}(y,t_{0}+t)=0=\beta_{\tnf{f}}(z,t)$.
	Thus, $\beta_{\tnf{f}}$ is time-invariant.
	Finally, equivalence of finite-time stability and $\beta_{\tnf{f}}$-stability follows since $\alpha(\phi_{f}(x,t))=\norm{\phi_{f}(x,t)}_{p}^{\eta}$ and $M\beta_{\tnf{f}}(\alpha(x),kt)=M(\norm{x}_{p}^{\eta}-kt)$.
\end{proof}

These examples illustrate how many classical stability notions can be associated with performance metrics and expressed using the $\beta$-stability framework.
Moreover, each of these stability notions is parameterized by a rate, $k$, offering a way of quantifying rate performance in terms of largest rate $k$ for which $\beta$-stability holds.

\subsection{Notions of Rate and Gain Performance}

Having motivated, defined, and illustrated the $\beta$-stability framework, let us now formally define the associated performance metrics and discuss how one might accurately determine such metrics of performance. Specifically, the two obvious metrics are rate and gain performance.

First we consider gain performance, which is rather trivially defined as the maximum overshoot/amplification with respect to a given choice of $\alpha$:
$M^*:=\sup_{x\in G}\sup_{t\in[0,\infty)}\frac{\alpha(\phi_{f}(x,t))}{\alpha(x)}$.
Note that the gain performance does not depend on $\beta$ and may be equivalently defined for any normalized/time-invariant $\beta$ as the smallest gain such that $f$ is $\beta$-stable on $G$ with rate $k=0$. Indeed, recall that the motivation for normalization and time-invariance of $\beta$ was to ensure that gain performance does not depend on the choice of $\beta$. Beyond motivating the normalization and time-invariance properties, however, gain performance is not particularly new or interesting -- corresponding to Lipschitz continuity of $\phi_f(x,\cdot)$ with respect to $\norm{\cdot}_{L_\infty}$ at $x=0$ and thus implying stability in the sense of Lyapunov (uniform continuity of $\phi_f$).

Next, let us consider the notion of rate performance. Since nonlinear systems exhibit distinct convergence behaviours, for any fixed choice of $\alpha$, a system may be rationally stable, but not exponentially stable or finite-time stable.
This complicates the task of defining a consistent notion of rate performance which is valid for any nonlinear ODE. The $\beta$-stability framework simplifies this problem by providing a universal definition of rate performance. Specifically, we have the following.
\begin{defn}
	For given $\alpha:\R^{n}\to\R_{+}$ and normalized, time-invariant $\beta$, we define the \tbf{rate performance} of $f:\Omega\to\R^{n}$ with respect to $\alpha$ and $\beta$ on $G\subseteq\Omega$ as the largest value of $k\geq 0$ for which there exists $M\geq 1$ such that $f$ is $\beta$-stable on $G$ with respect to $\alpha$ and with rate $k$ and gain $M$.
\end{defn}

We note that, while not all nonlinear systems will admit a rate performance metric for a \textit{given} $\beta$, every asymptotically stable such system will admit a rate performance metric for \textit{some} $\beta$, $\alpha$, as established in~\cite{grune1999asymptotic_exponential} (using $\beta=\beta_{\tnf{e}}$).

Having defined a suitable notion of rate performance, we now face the main challenge of testing the rate performance for a given vector field and choice of $\beta$. To resolve this challenge, in the following section, we propose an equivalent Lyapunov characterization of $\beta$-stability with respect to any given measure, so that $\beta$-stability implies existence of a Lyapunov function certifying this notion of stability. Using this result, the challenge of computing the rate performance may then be equivalently formulated as that of finding a suitable Lyapunov function, which can be posed as an optimization problem, as examined in Section~\ref{sec:SOS_tests}.


\section{A Necessary and Sufficient Lyapunov Function Characterization of $\beta$-Stability}\label{sec:LF_characterization}

Given the notion of $\beta$-stability, in this section we show that for any vector field $f$, $\beta$-stability of $f$ with rate $k$ on some forward invariant set $G$ is equivalent to the existence of a Lyapunov function, $V:G\to\R_{+}$, such that for all $x\in G$, $M^{-1}\alpha(x)\leq V(x)\leq \alpha(x)$ and
(if $V$ is differentiable)
\begin{equation}\label{eq:LF_diff_negativity}
	\nabla V(x)^T f(x)\!\leq\! k\rho(V(x)),\enspace\text{where}~ \rho(y)\!:=\! \partial_t\beta(y,t)\vert_{t=0}.
\end{equation}
To establish this result, Subsection~\ref{subsec:LF_characterization_sufficiency} first addresses sufficiency via Thm.~\ref{thm:LF_sufficiency}, Lem.~\ref{lem:comparison} and Thm.~\ref{thm:LF_sufficiency_diff}. Then, in Subsection~\ref{subsec:LF_characterization:necessity}, we establish the associated necessary counterparts to these results in Thm.~\ref{thm:LF_necessity}, Lem.~\ref{lem:comparison_N} and Thm.~\ref{thm:LF_necessity_diff_FT}.

\subsection{A Sufficient Lyapunov Condition for $\beta$-Stability}\label{subsec:LF_characterization_sufficiency}

In this subsection, we derive a sufficient condition for $\beta$-stability as feasibility of the Lyapunov inequality in~\eqref{eq:LF_diff_negativity}. Specifically, Thm.~\ref{thm:LF_sufficiency} provides a sufficient condition for $\beta$-stability in terms of existence of a $V$ for which $\beta$-stability holds with $\alpha=V$. Lemma~\ref{lem:comparison} then recalls the comparison principle, with application to Thm.~\ref{thm:LF_sufficiency}. Finally, Thm.~\ref{thm:LF_sufficiency_diff} provides sufficient Lyapunov conditions for $\beta$-stability.

In this section, we consider a slight generalization of the notion of $\beta$-stability by allowing for 2 measures, $\alpha_{1},\alpha_{2}:\R^{n}\to \R$. Furthermore, to simplify the presentation, we subsume the rate parameter, $k$, into the $\beta$ function as $\beta(y,kt)\mapsto\beta(y,t)$. Specifically, we define the following notion of $\beta$-stability with respect to two measures.
\begin{defn}
	Given $\alpha_{1},\alpha_{2}:\R^{n}\to\R_{+}$ and normalized, time-invariant $\beta$, we say that $f:\Omega\to\R^{n}$ is $\mbs{\beta}$\tbf{-stable} \tbf{with respect to two measures}, $\alpha_{1},\alpha_{2}$, on domain $G\subseteq\Omega$, if\\[-0.75\baselineskip]
	\begin{equation*}
		\alpha_{1}(\phi_{f}(x,t))\leq \beta(\alpha_{2}(x),t),\qquad \forall x\in G,~t\geq 0.
	\end{equation*}
\end{defn}
\vspace*{2mm}

We note that $\beta$-stability with respect to a measure $\alpha$ and with rate $k$ and gain $M$ as per Defn.~\ref{defn:betastability} implies $\hat{\beta}$-stability with respect to two measures, setting $\alpha_{1}(x)=M^{-1}\alpha(x)$, $\alpha_{2}(x)=\alpha(x)$, and $\hat{\beta}(y,t):=\beta(y,kt)$. Explicit dependency on the rate parameter is recovered in Cor.~\ref{cor:LF_sufficiency_rate} and Cor.~\ref{cor:LF_necessity_rate}.

We now start with the following relatively straightforward conditions for $\beta$-stability in terms of a Lyapunov function whose evolution is upper bounded by $\beta$.

\begin{thm}\label{thm:LF_sufficiency}
	For $\Omega \subseteq \R^n$ and $f:\Omega\to\R^{n}$, let $G\subseteq\Omega$ be forward invariant, and $\alpha_{1},\alpha_{2}:G\to\R_{+}$ be continuous. For any normalized and time-invariant $\beta$, if there exists $V:G\to\R_{+}$ such that for all $x\in G$,
	\begin{align}\label{eq:V_ineq1}
		\alpha_{1}(x)\leq V(x)&\leq \alpha_{2}(x),	\\
		V(\phi_{f}(x,t))&\leq \beta(V(x),t),\qquad \forall t\geq 0,\notag
	\end{align}
	then $f$ is $\beta$-stable on $G$ with respect to $\alpha_{1},\alpha_{2}$.
\end{thm}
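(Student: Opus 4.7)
The plan is to prove Theorem~\ref{thm:LF_sufficiency} by directly chaining the two hypothesized inequalities on $V$, using forward invariance of $G$ to apply them at both $x$ and $\phi_f(x,t)$, and invoking the monotonicity of $\beta$ in its first argument established in Lemma~\ref{lem:radially_increasing}. Specifically, fix $x$ in the full-measure set $S \subseteq G$ on which both hypotheses hold, and assume momentarily that $\phi_f(x,t) \in S$ as well (which is legitimate since $G$ is forward invariant). Combining the evolution bound $V(\phi_f(x,t)) \leq \beta(V(x),t)$ with the lower bound $\alpha_1(\phi_f(x,t)) \leq V(\phi_f(x,t))$ applied at $\phi_f(x,t)$, and the upper bound $V(x) \leq \alpha_2(x)$ applied at $x$ together with monotonicity of $\beta$, one obtains
\[
\alpha_1(\phi_f(x,t)) \leq V(\phi_f(x,t)) \leq \beta(V(x),t) \leq \beta(\alpha_2(x),t).
\]

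To upgrade this from an almost-everywhere statement to the pointwise statement required by $\beta$-stability, I would exploit the joint continuity of the maps $(x,t) \mapsto \alpha_1(\phi_f(x,t))$ and $(x,t) \mapsto \beta(\alpha_2(x),t)$, which follows from the assumed continuity of $\alpha_1, \alpha_2, \beta$ and of the flow $\phi_f$. Since $S$ has full measure in $G$ it is dense, and consequently the above inequality propagates from $S$ to all of $G$ for each fixed $t \geq 0$, yielding $\alpha_1(\phi_f(x,t)) \leq \beta(\alpha_2(x),t)$ on $G \times \R_+$, which is precisely $\beta$-stability with respect to the two measures $\alpha_1, \alpha_2$.

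The main technical obstacle will be handling the almost-everywhere qualifier cleanly: the chain above actually requires both $x$ and $\phi_f(x,t)$ to sit in $S$, and this joint condition is not automatic even when $S$ has full measure, because the orbit $\{\phi_f(x,t) : t \geq 0\}$ can in principle live in the null set $G \setminus S$. The remedy is to avoid any reliance on continuity of $V$ and instead argue directly via the continuity of the compositions $\alpha_1 \circ \phi_f$ and $\beta(\alpha_2(\cdot),\cdot)$, extending from $S$ to $G$ by density; the remaining manipulations are routine.
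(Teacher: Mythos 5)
Your argument follows the paper's own proof step by step: the three-part chain $\alpha_1(\phi_f(x,t)) \le V(\phi_f(x,t)) \le \beta(V(x),t) \le \beta(\alpha_2(x),t)$, monotonicity of $\beta$ in its first argument via Lemma~\ref{lem:radially_increasing}, forward invariance of $G$ to keep the orbit in the domain of $V$, and a continuity argument to upgrade an almost-everywhere inequality to all of $G\times\R_+$. In structure this is exactly what the paper does.

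The subtlety you flag in your final paragraph is genuine, and for what it is worth it is also passed over silently in the paper's proof, which justifies the chain merely by noting that forward invariance keeps $\phi_f(x,t)$ in $G$ rather than in the full-measure subset $S$ where~\eqref{eq:V_ineq1} actually holds. But your proposed remedy does not close the gap you raised. ``Arguing directly via continuity of $\alpha_1\circ\phi_f$ and $\beta(\alpha_2(\cdot),\cdot)$, extending from $S$ to $G$ by density'' is a restatement of the continuity step, not a fix: to run it you still need the set $D=\{(x,t)\in G\times\R_+ \mid x\in S,\ \phi_f(x,t)\in S\}$ to be dense in $G\times\R_+$, and density (or even full measure) of $S$ alone does not deliver that. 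With only continuity of $\phi_f$ assumed, the time-$t$ map $\phi_f(\cdot,t)$ need not pull null sets back to null sets, and a single orbit $\{\phi_f(x,t)\mid t\ge 0\}$ can sit entirely inside the Lebesgue-null complement $G\setminus S$ -- precisely the scenario you describe. Closing this requires some additional structure -- e.g.\ taking $S$ forward invariant, assuming~\eqref{eq:V_ineq1} for every $x\in G$, or imposing local Lipschitzness on $f$ so that $\phi_f(\cdot,t)$ is locally bi-Lipschitz and hence preserves null sets -- none of which your sketch supplies. The closing claim that ``the remaining manipulations are routine'' therefore overclaims.
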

\begin{proof}
	Suppose that $V$ satisfies~\eqref{eq:V_ineq1}. Since $\beta$ is normalized and time-invariant, by Lemma~\ref{lem:radially_increasing}, $\beta(y,t)$ is monotonically nondecreasing in $y$. Since forward invariance implies $\phi_{f}(x,t)\in G$ for all $x\in G$ and $t\geq 0$, it follows that
	\begin{equation*}
		\alpha_{1}(\phi_{f}(x,t))
		\leq V(\phi_{f}(x,t))	
		\leq \beta(V(x),t)
		\leq \beta(\alpha_{2}(x),t),
	\end{equation*}
	for all $x\in G$ and $t\geq 0$.
\end{proof}

Although Thm.~\ref{thm:LF_sufficiency} provides simple sufficient conditions for $\beta$-stability, these conditions are not readily verifiable without knowledge of the solution map, $\phi_{f}$. To express these conditions in terms of $\dot V$, then, we may use the comparison principle.
We recall the comparison principle in the following lemma, following immediately from Lemma~3.4 in~\cite{khalil2002nonlinear}. This lemma allows for continuous but not necessarily differentiable comparison functions, $v:\R\to\R$, using $D_{t}^{+} v$ to denote the upper right-hand (Dini) derivative of $v(t)$. However, for symmetry with the converse result in Lemma~\ref{lem:comparison_N}, we explicitly define $v(t):=V(\phi_f(x,t))$.

\begin{lem}[Comparison Principle]\label{lem:comparison}
	For $\rho:\R_{+}\to\R$, let $\beta$ be the unique continuous function satisfying $\beta(y,0)=y$ and $\partial_{t}\beta(y,t)=\rho(\beta(y,t))$ for all $t,y\geq 0$.
	For $\Omega\subseteq\R^{n}$ and $f:\Omega\to\R^{n}$, let $G$ be forward invariant for $f$, and let $V:G\to\R_{+}$ be such that $V(\phi_{f}(x,t))$ is continuous in $t\geq 0$ for all $x\in G$. Define $\dot{V}(x):=D_{t}^{+}V(\phi_{f}(x,t))|_{t=0}$.

	If $\dot{V}(x)\leq \rho(V(x))$ for all $x\in G$,
	then $V(\phi_{f}(x,t))\leq \beta(V(x),t)$ for all $x\in G$ and $t\geq 0$.
\end{lem}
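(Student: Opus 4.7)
The plan is to reduce this statement to the standard comparison principle (Lemma~3.4 in~\cite{khalil2002nonlinear}) applied to the scalar function $v(t) := V(\phi_f(x,t))$. Khalil's lemma states that if $v$ is continuous on $[0,T)$ with $v(0)=v_{0}$ and satisfies $D_{t}^{+}v(t)\leq \rho(v(t))$ pointwise in $t$, then $v(t)\leq y(t)$ where $y$ is the unique continuous solution of $\dot{y}=\rho(y)$ with $y(0)=v_{0}$. Since the hypothesis of the present lemma already identifies $\beta(V(x),\cdot)$ as exactly this $y$ with $v_{0}=V(x)$, the whole task reduces to upgrading the pointwise-in-state bound $\dot{V}(x)\leq\rho(V(x))$ to a pointwise-in-time Dini bound on $v$.

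To carry this out, I would fix $x\in G$, set $v(t):=V(\phi_{f}(x,t))$, and compute $D_{t}^{+}v$ at an arbitrary $t_{0}\geq 0$ using the semigroup property of $\phi_{f}$. Writing
\begin{equation*}
    v(t_{0}+h)-v(t_{0})=V(\phi_{f}(\phi_{f}(x,t_{0}),h))-V(\phi_{f}(x,t_{0})),
\end{equation*}
and dividing by $h>0$, the $\limsup$ as $h\to 0^{+}$ matches the definition of $\dot V$ evaluated at the state $\phi_{f}(x,t_{0})$, so $D_{t}^{+}v(t_{0})=\dot{V}(\phi_{f}(x,t_{0}))$. By forward invariance of $G$, we have $\phi_{f}(x,t_{0})\in G$, so the hypothesis applies at this point and yields $\dot{V}(\phi_{f}(x,t_{0}))\leq \rho(V(\phi_{f}(x,t_{0})))=\rho(v(t_{0}))$. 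Since $t_{0}\geq 0$ was arbitrary, $D_{t}^{+}v(t)\leq \rho(v(t))$ holds for all $t\geq 0$.

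With this trajectory-wise Dini bound in hand, together with the continuity of $v$ provided in the hypothesis and the initial condition $v(0)=V(x)$, Khalil's Lemma~3.4 applies directly and gives $V(\phi_{f}(x,t))=v(t)\leq \beta(V(x),t)$ for all $t\geq 0$, which is the desired conclusion. The main conceptual obstacle is purely bookkeeping: converting the single-point Lyapunov inequality into the time-parameterized Dini inequality required by the classical comparison principle. The semigroup property plus forward invariance of $G$ are precisely the two ingredients that make this translation trivial, so I expect no genuine difficulty beyond making the Dini-derivative identification rigorously.
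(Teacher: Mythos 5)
Your proposal is correct and matches the paper's route exactly: the paper gives no separate argument, stating the lemma as following immediately from Lemma~3.4 of~\cite{khalil2002nonlinear} applied to $v(t):=V(\phi_f(x,t))$, and your use of the semigroup property plus forward invariance of $G$ is precisely the bookkeeping that justifies that reduction. Note only that your argument (necessarily) reads the hypothesis $\dot V(x)\leq\rho(V(x))$ as holding at every point of $G$ (so that it applies along the whole forward orbit), which is the intended interpretation, since the condition at the single initial point alone cannot control the trajectory.
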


The use of the comparison principle is not conservative since, as will be shown in Lemma~\ref{lem:comparison_N}, a bound of the form $V(\phi_{f}(x,t))\leq\beta(V(x),t)$ implies that $\dot V(x)\leq  \rho(V(x))$.

Of course, application of the comparison principle requires $\beta(y,0)=y$ and $\partial_{t}\beta(y,t)=\rho(\beta(y,t))$ for some $\rho\;:\;\R_+\rightarrow \R$. However, this is equivalent to normalization and time-invariance of $\beta$, as shown in the following proposition.

\begin{prop}\label{prop:ODE2flow}
	If $\beta(y,t)$ is differentiable in $t$, normalized, and time-invariant, then
	\begin{align}\label{eq:ODE_scalar}
		\partial_{t}\beta(y,t)&=\rho(\beta(y,t)),	&	\beta(y,0)&=y,	&
		\forall y,t\geq 0,
	\end{align}
	where $\rho(y):=\partial_{t}\beta(y,t)\vert_{t=0}$. Conversely, for any $\rho:\R_{+}\to\R$, if there exists a unique and continuous function $\beta$ that satisfies~\eqref{eq:ODE_scalar}, then $\beta$ is normalized and time-invariant.
\end{prop}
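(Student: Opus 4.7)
The plan is to prove the two implications separately, leveraging the semigroup-like identity implied by time-invariance for the forward direction, and uniqueness of ODE solutions for the converse.

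For the forward direction, I would fix arbitrary $y,t_{0}\geq 0$ and set $z:=\beta(y,t_{0})$. Normalization gives $\beta(z,0)=z=\beta(y,t_{0})$, so time-invariance (Defn.~\ref{defn:time-invariant}) yields the semigroup identity $\beta(y,t_{0}+t)=\beta(z,t)=\beta(\beta(y,t_{0}),t)$ for all $t\geq 0$. Differentiating both sides in $t$ at $t=0$, the left side becomes $\partial_{t}\beta(y,t)\vert_{t=t_{0}}$ and the right side is $\partial_{t}\beta(z,t)\vert_{t=0}=\rho(z)=\rho(\beta(y,t_{0}))$ by definition of $\rho$. Since $t_{0}\geq 0$ was arbitrary, this is the ODE $\partial_{t}\beta(y,t)=\rho(\beta(y,t))$, and the initial condition $\beta(y,0)=y$ is nothing but normalization.

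For the converse, I would assume $\beta$ is the unique continuous solution to the scalar ODE. The condition $\beta(y,0)=y$ is built into the initial data, so $\beta$ is normalized. To verify time-invariance, fix $y,z,t_{0}\geq 0$ with $\beta(y,t_{0})=\beta(z,0)$; by the just-established normalization, $\beta(z,0)=z$, so $\beta(y,t_{0})=z$. Define $g(t):=\beta(y,t_{0}+t)$; then $g(0)=z$, and by the chain rule together with the ODE for $\beta(y,\cdot)$ at the shifted argument, $g'(t)=\rho(g(t))$. Hence $g$ and $t\mapsto\beta(z,t)$ are both continuous solutions of the same scalar initial value problem, and uniqueness forces $g(t)=\beta(z,t)$, which is exactly $\beta(y,t_{0}+t)=\beta(z,t)$ for all $t\geq 0$.

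The main subtlety lies in the forward direction: one must recognize that the symmetric time-invariance condition of Defn.~\ref{defn:time-invariant}, combined with normalization, is equivalent to the semigroup identity $\beta(y,t_{0}+t)=\beta(\beta(y,t_{0}),t)$, from which the ODE structure falls out by differentiation at $t=0$. The converse direction is routine once uniqueness is invoked, and this uniqueness is given explicitly in the hypothesis.
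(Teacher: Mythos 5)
Your proposal is correct and follows essentially the same route as the paper: in the forward direction, deriving the semigroup identity $\beta(y,t_{0}+t)=\beta(\beta(y,t_{0}),t)$ from normalization plus time-invariance and differentiating at $t=0$; in the converse, using uniqueness of the scalar IVP to identify the shifted solution $t\mapsto\beta(y,t_{0}+t)$ with $t\mapsto\beta(z,t)$ (the paper phrases this as the semigroup property of the unique solution, but the underlying argument is the same).
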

\begin{proof}
	Suppose that $\beta$ is normalized and time-invariant.
	Then, for all $y,t\geq 0$, letting $z=\beta(z,0)=\beta(y,t)$ we have
	\begin{equation*}
		\beta(\beta(y,t),s)=\beta(z,s)=\beta(y,t+s),\qquad \forall s\geq 0.
	\end{equation*}
	It follows that, for all $y,t\geq 0$,
	\begin{align*}
		\partial_{t}\beta(y,t)
		&=\partial_{s}\beta(y,t+s)|_{s=0}	\\
		&=\partial_{s}\beta(\beta(y,t),s)|_{s=0}
		=\rho(\beta(y,t)).
	\end{align*}
	Conversely, if $\beta$ is the unique and continuous function satisfying~\eqref{eq:ODE_scalar}, then $\beta$ is normalized by definition, and satisfies the semigroup property, $\beta(\beta(y,t),s)=\beta(y,t+s)$ for all $y,t,s\geq 0$. It follows that, if $\beta(y,t)=\beta(z,0)=z$ for some $y,t\geq 0$, then also $\beta(y,t+s)=\beta(\beta(y,t),s)=\beta(\beta(z,0),s)=\beta(z,s)$ for all $s\geq 0$, and thus $\beta$ is time-invariant.
\end{proof}

We now apply Prop.~\ref{prop:ODE2flow} and Lemma~\ref{lem:comparison} to tighten the condition $V(\phi_{f}(x,t))\leq \beta(V(x),t)$ in Thm.~\ref{thm:LF_sufficiency} to $\dot{V}(x)\leq \rho(V(x))$, obtaining the following sufficient Lyapunov conditions for $\beta$-stability with respect to two measures.

\begin{thm}\label{thm:LF_sufficiency_diff}
	For $\,\Omega \subseteq \R^n$ and $f:\Omega\to\R^{n}$, let $G\subseteq\Omega$ be forward invariant, and $\alpha_{1},\alpha_{2}:G\to\R_{+}$ be continuous. For $\rho:\R_{+}\to\R$, let $\beta$ be the unique continuous function satisfying $\beta(y,0)=y$ and $\partial_{t}\beta(y,t)=\rho(\beta(y,t))$.	
	If there exists a continuous function $V:G\to\R_{+}$ which
	satisfies
	\begin{align*}
		\alpha_{1}(x)\leq V(x)&\leq \alpha_{2}(x),	\\
		\dot{V}(x)&\leq \rho(V(x)),\qquad \forall x\in G,
	\end{align*}
	where $\dot{V}(x):=D_{t}^{+}V(\phi_{f}(x,t))|_{t=0}$,
	then
	$f$ is $\beta$-stable on $G$ with respect to $\alpha_{1},\alpha_{2}$. 
\end{thm}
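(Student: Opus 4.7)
The plan is to chain together three ingredients already established in the paper. First, Proposition~\ref{prop:ODE2flow} guarantees that the $\beta$ generated as the unique continuous solution of $\beta(y,0)=y$, $\partial_{t}\beta(y,t)=\rho(\beta(y,t))$ is both normalized and time-invariant, so that the $\beta$-stability framework (and in particular Theorem~\ref{thm:LF_sufficiency}) is applicable. Second, Lemma~\ref{lem:comparison} (the comparison principle) converts the pointwise differential inequality $\dot{V}(x)\le \rho(V(x))$ into the integrated inequality $V(\phi_{f}(x,t))\le \beta(V(x),t)$ along trajectories. Third, Theorem~\ref{thm:LF_sufficiency} closes the loop by taking such an integrated bound on $V$ and producing the desired $\beta$-stability bound in terms of $\alpha_{1}$ and $\alpha_{2}$.

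Concretely, I would proceed as follows. Begin by fixing any $x\in G$ and considering the scalar map $t\mapsto V(\phi_{f}(x,t))$: forward invariance of $G$ keeps the trajectory in the domain where $V$ and the differential inequality are defined, while continuity of $V$ together with continuity of $\phi_{f}(x,\cdot)$ makes this map continuous in $t$, which is precisely the regularity required by Lemma~\ref{lem:comparison}. Next, for those $x\in G$ at which the hypothesis $\dot{V}(x)\le \rho(V(x))$ is available, invoke Lemma~\ref{lem:comparison} to deduce $V(\phi_{f}(x,t))\le \beta(V(x),t)$ for all $t\geq 0$. Combined with the sandwich $\alpha_{1}(x)\le V(x)\le \alpha_{2}(x)$, this places $V$ inside the hypotheses of Theorem~\ref{thm:LF_sufficiency} on a set of full measure in $G$, and the continuity-based extension argument already used in that theorem delivers $\alpha_{1}(\phi_{f}(x,t))\le \beta(\alpha_{2}(x),t)$ for every $x\in G$ and $t\ge 0$.

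The main technical obstacle will be reconciling the ``for almost every $x\in G$'' quantifier on the differential inequality with Lemma~\ref{lem:comparison}, which conceptually requires the inequality to hold \emph{along an entire trajectory}, not merely at one initial point. A Lebesgue-null set in $G$ need not be null along a given orbit, so one cannot simply pick an $x$ satisfying the derivative bound and immediately invoke the comparison principle. The standard remedy is to use absolute continuity of the scalar map $t\mapsto V(\phi_{f}(x,t))$ (guaranteed here by the regularity of $V$ and $\phi_{f}$), together with a Fubini-type argument that transfers ``a.e.\ in $G$'' to ``a.e.\ in $t$ along a.e.\ orbit'' -- which is exactly the regularity under which the Dini-derivative comparison argument of Lemma~\ref{lem:comparison} operates. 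Once the integrated bound $V(\phi_{f}(x,t))\le \beta(V(x),t)$ is obtained on a set of full measure in $G$, the continuity-based upgrade in the proof of Theorem~\ref{thm:LF_sufficiency} promotes it to all $x\in G$, completing the argument.
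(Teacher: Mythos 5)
Your proposal follows the paper's route exactly: invoke Prop.~\ref{prop:ODE2flow} to certify that $\beta$ is normalized and time-invariant, use Lem.~\ref{lem:comparison} to pass from $\dot V(x)\le\rho(V(x))$ to $V(\phi_{f}(x,t))\le\beta(V(x),t)$, and then apply Thm.~\ref{thm:LF_sufficiency} to conclude $\beta$-stability; the paper's proof is precisely this three-step chaining and nothing more. Where you diverge is in the treatment of the ``for almost every $x\in G$'' quantifier. In the paper this is a non-issue by construction: Lem.~\ref{lem:comparison} is stated with a hypothesis at the single point $x$ (``for any $x\in G$, if $\dot V(x)\le\rho(V(x))$, then $V(\phi_{f}(x,t))\le\beta(V(x),t)$ for all $t\ge0$''), so it is applied separately at each $x$ in the full-measure set, and Thm.~\ref{thm:LF_sufficiency} itself only requires its hypotheses for almost every $x\in G$, with continuity upgrading the conclusion to all of $G$. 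Your instinct that a comparison principle really needs the differential inequality along the whole orbit (via the semigroup property, $\dot V(\phi_{f}(x,s))\le\rho(V(\phi_{f}(x,s)))$ for all $s\ge0$) is a fair criticism of the lemma as literally stated, but the repair you sketch is weaker than you claim: a Fubini transfer of ``a.e.\ in $G$'' to ``a.e.\ in $t$ along a.e.\ orbit'' requires the time-$t$ flow maps not to concentrate null sets (orbits are themselves Lebesgue-null for $n\ge 2$, so an entire orbit may lie inside the exceptional set), and, more seriously, absolute continuity of $t\mapsto V(\phi_{f}(x,t))$ is \emph{not} guaranteed by the theorem's hypotheses---$V$ is only assumed continuous---so an a.e.-in-$t$ Dini bound alone does not yield the integrated inequality (the Cantor-function phenomenon). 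In short: as a reconstruction of the paper's argument your chaining is correct and structurally identical; the additional Fubini/absolute-continuity machinery is neither needed under the paper's formulation of Lem.~\ref{lem:comparison} nor fully justified by the assumptions you have available.
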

\begin{proof}
	Suppose that $V:G\to \R_{+}$ satisfies the proposed conditions.
	Then,
	by Lemma~\ref{lem:comparison}, $V(\phi_{f}(x,t))\leq \beta(V(x),t)$ for all $x\in G$, and all $t\geq 0$. Since, by Prop.~\ref{prop:ODE2flow}, $\beta$ is normalized and time-invariant, it follows by Thm.~\ref{thm:LF_sufficiency} that $f$ is $\beta$-stable on $G$ with respect to $\alpha_{1},\alpha_{2}$.
\end{proof}

Having now obtained the main sufficiency results for $\beta$-stability with respect to two measures, we interpret this result using the original definition of $\beta$-stability using single-measure rate-performance.

\begin{cor}\label{cor:LF_sufficiency_rate}
	For $\Omega \subseteq \R^n$ and $f:\Omega\to\R^{n}$, let $G\subseteq\Omega$ be forward invariant, and $\alpha:G\to\R_{+}$ be continuous. For $\rho:\R_{+}\to\R$, let $\beta$ be the unique, continuous function satisfying $\beta(y,0)=y$ and $\partial_{t}\beta(y,t)=\rho(\beta(y,t))$ for all $t,y\geq 0$. For any $k\in\R$ and $M\geq 1$, if there exists a continuous function $V:G\to\R_{+}$ such that, for all $x\in G$,
	\begin{equation*}
		M^{-1}\alpha(x)\leq V(x)\leq \alpha(x),	\quad\text{and}\quad
		\dot{V}(x)\leq k\rho(V(x)),
	\end{equation*}
	where $\dot{V}(x):=D_{t}^{+}V(\phi_{f}(x,t))|_{t=0}$,
	then $f$ is $\beta$-stable on $G$ with respect to $\alpha$, with rate $k$ and gain $M$. 
\end{cor}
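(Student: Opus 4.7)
The plan is to obtain Cor.~\ref{cor:LF_sufficiency_rate} as a direct reduction to Thm.~\ref{thm:LF_sufficiency_diff} (the 2-measure version) by absorbing the rate parameter $k$ into the bound function $\beta$ and splitting the single measure $\alpha$ into two measures $\alpha_1,\alpha_2$.

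First I would introduce $\alpha_1(x) := M^{-1}\alpha(x)$ and $\alpha_2(x) := \alpha(x)$, both of which inherit continuity from $\alpha$, so that the bounds $M^{-1}\alpha(x)\le V(x)\le \alpha(x)$ become exactly $\alpha_1(x)\le V(x)\le \alpha_2(x)$. Next, I would define the rescaled rate $\tilde\rho(y):=k\rho(y)$ and the rescaled bound $\tilde\beta(y,t):=\beta(y,kt)$. A short calculation using the chain rule together with the hypothesis $\partial_t\beta(y,t)=\rho(\beta(y,t))$ shows
\begin{equation*}
    \partial_t\tilde\beta(y,t)=k\,\rho(\beta(y,kt))=\tilde\rho(\tilde\beta(y,t)),\qquad \tilde\beta(y,0)=y,
\end{equation*}
so $\tilde\beta$ is exactly the unique continuous solution to the scalar ODE driven by $\tilde\rho$ with identity initial condition, i.e. it satisfies the hypothesis on $\beta$ in Thm.~\ref{thm:LF_sufficiency_diff} with $\rho$ replaced by $\tilde\rho$.

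With these identifications, the Lyapunov decay condition assumed in the corollary, $\dot V(x)\le k\rho(V(x))$, is precisely $\dot V(x)\le \tilde\rho(V(x))$. All hypotheses of Thm.~\ref{thm:LF_sufficiency_diff} are therefore in place (forward invariance of $G$, continuity of $V$ and of $\alpha_1,\alpha_2$, the sandwich bound, and the differential inequality holding a.e.), and applying that theorem yields $\tilde\beta$-stability of $f$ on $G$ with respect to the two measures $\alpha_1,\alpha_2$:
\begin{equation*}
    \alpha_1(\phi_f(x,t))\le \tilde\beta(\alpha_2(x),t),\quad \forall x\in G,\ t\ge 0.
\end{equation*}

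Finally, I would unwind the substitutions: the left-hand side is $M^{-1}\alpha(\phi_f(x,t))$ and the right-hand side is $\beta(\alpha(x),kt)$, so multiplying through by $M$ gives
\begin{equation*}
    \alpha(\phi_f(x,t))\le M\,\beta(\alpha(x),kt),\quad \forall x\in G,\ t\ge 0,
\end{equation*}
which is exactly Defn.~\ref{defn:betastability} of $\beta$-stability on $G$ with respect to $\alpha$ with rate $k$ and gain $M$. There is no serious obstacle here; the only subtle point is verifying that the rescaled pair $(\tilde\rho,\tilde\beta)$ still satisfies the uniqueness/ODE hypothesis required to invoke Thm.~\ref{thm:LF_sufficiency_diff}, which follows from the chain rule applied to $\beta(y,kt)$ and the normalization $\tilde\beta(y,0)=\beta(y,0)=y$.
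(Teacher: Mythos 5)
Your proposal is correct and follows essentially the same route as the paper: the paper's proof likewise sets $\alpha_{1}(x):=M^{-1}\alpha(x)$, $\alpha_{2}(x):=\alpha(x)$, absorbs the rate via $\hat{\beta}(y,t):=\beta(y,kt)$ (noting $\partial_t\beta(y,kt)\vert_{t=0}=k\rho(y)$), and invokes Thm.~\ref{thm:LF_sufficiency_diff}. Your added chain-rule verification that the rescaled pair satisfies the ODE/uniqueness hypothesis is a fine elaboration of the same argument.
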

\begin{proof}
The proof follows directly from Thm.~\ref{thm:LF_sufficiency_diff} using $\alpha_{1}(x):=M^{-1}\alpha(x)$, $\alpha_{2}(x):=\alpha(x)$, and $\hat{\beta}(y,t):=\beta(y,kt)$, where we note that $\partial_t \beta(y,kt)\vert_{t=0} =k \partial_t \beta(y,t)\vert_{t=0}$.
\end{proof}

Having derived sufficient conditions for $\beta$-stability, in the next subsection, we show that these conditions are also necessary for $\beta$-stability with a given rate and gain.

\subsection{A Necessary Lyapunov Condition for $\beta$-Stability}\label{subsec:LF_characterization:necessity}

In the previous subsection, Thm.~\ref{thm:LF_sufficiency}, Lem.~\ref{lem:comparison} and Thm.~\ref{thm:LF_sufficiency_diff} established a sufficient condition for $\beta$-stability in terms of existence of a Lyapunov function.
In this subsection, we provide converse counterparts of those results, showing that $\beta$-stability of a vector field always implies existence of a Lyapunov function certifying this property.
First, Thm.~\ref{thm:LF_necessity} shows that if $f$ is $\beta$-stable, then there exists a $V$ which is upper bounded by $\beta$ -- i.e. $V(\phi_f(x,t))\le \beta (V(x),t)$. Next, Lem.~\ref{lem:comparison_N} shows that if this $V$ is continuous, then $\dot V(x) \le \rho (V(x))$ (a converse comparison principle). Finally, Thm.~\ref{thm:LF_necessity_diff_FT} provides conditions under which $V$ is continuous -- implying necessity of Thm.~\ref{thm:LF_sufficiency_diff}. Conditions under which $V$ is differentiable are provided in Thm.~\ref{thm:LF_necessity_diff}.

To start, Thm.~\ref{thm:LF_necessity} provides a converse of Thm.~\ref{thm:LF_sufficiency}. This shows that if $\phi_f$ is $\beta$-stable, then there exists a $V(x)$ whose evolution along trajectories is upper-bounded by $\beta$ -- satisfying the conditions of Thm.~\ref{thm:LF_sufficiency}.


\begin{thm}\label{thm:LF_necessity}
	For $\Omega\subseteq\R^{n}$ and $f:\Omega\to\R^{n}$, let $G\subseteq\Omega$ be forward invariant for $f$, and let $\alpha_{1},\alpha_{2}:G\to\R_{+}$. For any normalized and time-invariant $\beta$, if $f$ is $\beta$-stable on $G$ with respect to $\alpha_{1},\alpha_{2}$,
	 then there exists $V:G\to\R_{+}$ such that
	\begin{align}\label{eq:V_ineq2}
		\alpha_{1}(x)\leq V(x)&\leq \alpha_{2}(x),	&	&\forall x\in G,	\\
		V(\phi_{f}(x,t))&\leq \beta(V(x),t),	&	&\forall t\geq			\notag 0.
	\end{align}
\end{thm}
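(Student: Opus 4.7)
The plan is to construct a Kurzweil-Yoshizawa style converse Lyapunov function
$$V(x) := \inf \mathcal{W}(x), \quad \mathcal{W}(x) := \{y \in \R_{+} \mid \alpha_{1}(\phi_{f}(x,t)) \leq \beta(y,t),~ \forall t \geq 0\},$$
and verify directly that this $V$ satisfies both inequalities in~\eqref{eq:V_ineq2}. First I would check well-posedness and the sandwich bound $\alpha_{1}(x) \leq V(x) \leq \alpha_{2}(x)$: by the $\beta$-stability hypothesis, $\alpha_{2}(x) \in \mathcal{W}(x)$, so $\mathcal{W}(x)$ is nonempty and $V(x) \leq \alpha_{2}(x)$; evaluating the defining inequality at $t=0$ and using normalization $\beta(y,0)=y$ shows that every $y \in \mathcal{W}(x)$ satisfies $y \geq \alpha_{1}(x)$, so $V(x) \geq \alpha_{1}(x) \geq 0$.

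Next, I would observe that $\mathcal{W}(x)$ is closed in $\R_{+}$ as an intersection over $t \geq 0$ of the closed sets $\{y \in \R_{+} : \alpha_{1}(\phi_{f}(x,t)) \leq \beta(y,t)\}$ (closed because $\beta$ is continuous in its first argument, a consequence of the continuity built into Defn.~\ref{defn:time-invariant}). Since $\mathcal{W}(x)$ is also bounded below by $\alpha_{1}(x)$, the infimum is attained, i.e.\ $V(x) \in \mathcal{W}(x)$, so $\alpha_{1}(\phi_{f}(x,t)) \leq \beta(V(x),t)$ for every $t \geq 0$.

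To establish the decay bound $V(\phi_{f}(x,s)) \leq \beta(V(x), s)$, I would invoke a semigroup property for $\beta$: applying normalization to $\beta(y,s)$ gives $\beta(\beta(y,s),0) = \beta(y,s)$, and time-invariance then yields $\beta(\beta(y,s),t) = \beta(y, s+t)$ for all $t \geq 0$ (the same computation used in the first half of Prop.~\ref{prop:ODE2flow}). Combining this with the semigroup property of $\phi_{f}$ and $V(x) \in \mathcal{W}(x)$, one obtains for every $t \geq 0$,
$$\alpha_{1}(\phi_{f}(\phi_{f}(x,s), t)) = \alpha_{1}(\phi_{f}(x, s+t)) \leq \beta(V(x), s+t) = \beta(\beta(V(x), s), t),$$
which shows $\beta(V(x), s) \in \mathcal{W}(\phi_{f}(x,s))$. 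By definition of the infimum, $V(\phi_{f}(x,s)) \leq \beta(V(x), s)$, as required.

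The main point needing care is closedness of $\mathcal{W}(x)$ so that the infimum is attained. If one instead wished to avoid this, the alternative is to take an infimizing sequence $y_{n} \to V(x)$ in $\mathcal{W}(x)$ and use continuity and monotonicity of $\beta(\cdot, s)$ (via Lem.~\ref{lem:radially_increasing}) to conclude $V(\phi_{f}(x,s)) \leq \lim_{n}\beta(y_{n},s) = \beta(V(x),s)$. Either route is routine once the structural properties of $\beta$ are available, and the construction yields a $V$ with no imposed regularity, which is appropriate here since the continuity and differentiability questions needed to upgrade this to a $\dot V \leq \rho(V)$ statement are exactly what Thms.~\ref{thm:LF_necessity_diff} and~\ref{thm:LF_necessity_diff_FT} will subsequently address.
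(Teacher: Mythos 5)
Your proposal is correct and follows essentially the same route as the paper's proof: the identical construction $V(x)=\inf\mcl{W}(x)$, the same sandwich argument via $\alpha_{2}(x)\in\mcl{W}(x)$ and normalization at $t=0$, closedness of $\mcl{W}(x)$ so the infimum is attained, and the semigroup identity $\beta(\beta(y,s),t)=\beta(y,s+t)$ combined with the semigroup property of $\phi_{f}$ to show $\beta(V(x),s)\in\mcl{W}(\phi_{f}(x,s))$. The alternative infimizing-sequence remark is a harmless addition but not needed.
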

\vspace{2mm}
\begin{proof}
	Suppose that $f$ is $\beta$-stable on $G$ with respect to $\alpha_{1},\alpha_{2}$. To begin, we define the set-valued map
\begin{equation}\label{eq:Wset}
		\mcl{W}(x):=\bl\{y\in\R_{+}\mid \alpha_{1}\bl(\phi_{f}(x,t)\br)\leq \beta(y,t),~\forall t\geq 0\br\},
	\end{equation}
and the converse Lyapunov function 
	\begin{equation}\label{eq:Vconverse}
		V(x):=\inf_{y \in \mcl W(x)} y.
	\end{equation}
While this construction is somewhat awkward, when $\beta$ has a well-defined backwards-in-time continuation, it simplifies to the expression in Eqn.~\eqref{eq:V_sup}. Fig.~\ref{fig:LF_illustration} illustrates this construction and relationship to Eqn.~\eqref{eq:V_sup}.

First, note that since $f$ is $\beta$-stable on $G$, we have $\alpha_{2}(x)\in \mcl{W}(x)$ for all $x\in G$, and therefore $\mcl{W}(x)$ is non-empty. Since also $y\geq 0$ for all $y\in\mcl{W}(x)$ (by definition), the infimum is lower-bounded and hence well-defined for all $x\in G$.

	Next, since $\alpha_{2}(x)\in \mcl{W}(x)$, we have $V(x)\leq\alpha_{2}(x)$ for all $x\in G$. Furthermore, since $\beta$ is normalized, for any  $y\in\mcl{W}(x)$ we have $y=\beta(y,0)\geq\alpha_{1}(\phi_{f}(x,0))=\alpha_{1}(x)$, and therefore also $V(x)\geq\alpha_{1}(x)$ for all $x\in G$.
	
	Finally, to see that $V(\phi_{f}(x,t))\leq \beta(V(x),t)$, fix arbitrary $x\in G$. Since $\beta(y,t)$ is continuous in $y$, the set $\mcl{W}(x)$ is closed, and thus $V(x)=\inf_{y\in \mcl{W}(x)} y\in\mcl{W}(x)$. It follows that $\alpha_{1}(\phi_{f}(x,t))\leq \beta(V(x),t)$ for all $t\geq 0$, and therefore
	\begin{align*}
		\alpha_{1}\bl(\phi_{f}(\phi_{f}(x,t),s)\br)
		&= \alpha_{1}\bl(\phi_{f}(x,t+s)\br)	\\
		&\leq \beta(V(x),t+s)
		=\beta\bl(\beta(V(x),t),s\br),
	\end{align*}
	for all $t,s\geq 0$.
	By definition of $\mcl{W}(x)$, then, $\beta(V(x),t)\in\mcl{W}(\phi_{f}(x,t))$ and thus $V(\phi_{f}(x,t))\leq \beta(V(x),t)$ for $t\geq 0$.
\end{proof}

\begin{figure}[t]
	\centering
	\includegraphics[width=1.0\linewidth]{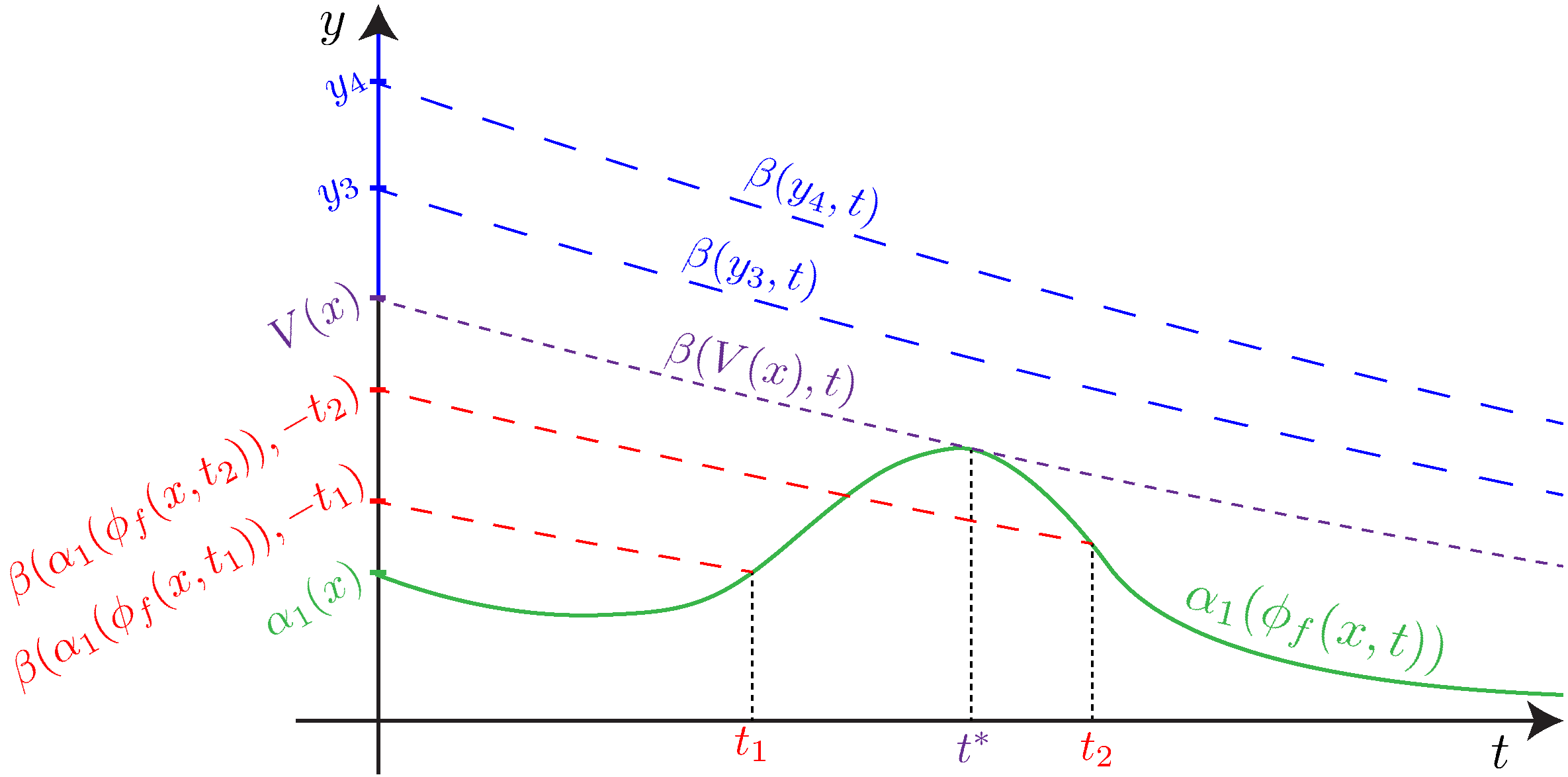}
	\caption{Illustration of the set\\
	$\mcl{W}(x):=\{y\in\R_{+}\mid \alpha_{1}(\phi_{f}(x,t))\leq\beta(y,t),~\forall t\in\R_{+}\}$ (in blue) for a particular curve $\alpha_{1}(\phi_{f}(x,t))$. The converse Lyapunov function in the proof of Thm.~\ref{thm:LF_necessity} is then given by $V(x):=\inf_{y\in\mcl{W}(x)}y$. Note that if $\beta(y,-t)$ is well-defined and $\beta(\beta(y,t),-t)=y$, then $\beta(\alpha_{1}(\phi_{f}(x,t)),-t)\leq y$ for any $y\in\mcl{W}(x)$ and $t\geq 0$. It follows that, in this case, $V(x):=\sup_{t\in[0,\infty)}\beta(\alpha_{1}(\phi_{f}(x,t)),-t)$.}
	\label{fig:LF_illustration}	
	\vspace*{-0.4cm}
\end{figure}

Thm.~\ref{thm:LF_necessity} proves that the sufficient Lyapunov conditions from Thm.~\ref{thm:LF_sufficiency} are also necessary for $\beta$-stability with respect to two measures. In particular, $\beta$-stability implies that the function $V(x):=\inf_{y\in\mcl{W}(x)}y$ with $\mcl{W}(x)$ as in~\eqref{eq:Wset} satisfies the Lyapunov conditions of Thm.~\ref{thm:LF_sufficiency}. Here, we note that if $\beta(y,t)$ is well-defined and time-invariant at negative times (so that $\beta(\beta(y,t),-t)=y$ for all $y,t\geq 0$) then this converse Lyapunov function can be equivalently expressed as
\begin{equation}\label{eq:V_sup}
	V(x):=\sup_{t\in[0,T_{f}(x))} \beta\bl(\alpha_{1}(\phi_{f}(x,t)),-t\br),
\end{equation}
where $T_{f}(x):=\sup\{t\geq0\mid \alpha_{1}(\phi_{f}(x,t))>0\}$. Sufficient conditions under which this construction may be used are proven in Lemma~\ref{lem:V_sup_appx} in Appx.~\ref{appx:proofs}.

As discussed in Subsection~\ref{subsec:LF_characterization_sufficiency}, to tighten the condition $V(\phi_{f}(x,t))\leq\beta(V(x),t)$ in Thm.~\ref{thm:LF_sufficiency} to $\dot{V}(x)\leq \rho(V(x))$ in  Thm.~\ref{thm:LF_sufficiency_diff} (where $\rho(y):=\partial_{t}\beta(y,t)|_{t=0}$), we use the comparison principle in Lemma~\ref{lem:comparison}. In order to prove the converse of Thm.~\ref{thm:LF_sufficiency_diff}, therefore, we first prove the converse of Lemma~\ref{lem:comparison} -- showing that $\dot{V}(x)\leq \rho(V(x))$ is also necessary for $V(\phi_{f}(x,t))\leq\beta(V(x),t)$.

\begin{lem}[Converse Comparison Principle]\label{lem:comparison_N}
	Let $\beta$ be normalized and time-invariant, and $\rho(y):=\partial_{t}\beta(y,t)|_{t=0}$.
	For $\Omega\subseteq\R^{n}$ and $f:\Omega\to\R^{n}$, let $G$ be forward invariant for $f$. Let $V:G\to\R_{+}$ be such that $V(\phi_{f}(x,t))$ is continuous in $t\geq 0$ for all $x\in G$, and define $\dot{V}(x):=D_{t}^{+}V(\phi_{f}(x,t))|_{t=0}$.

 	If $V(\phi_{f}(x,t))\leq \beta(V(x),t)$ for all $x\in G$ and $t\geq 0$,
	then $\dot{V}(x)\leq \rho(V(x))$ for all $x\in G$.
\end{lem}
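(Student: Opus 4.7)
The plan is to obtain the inequality directly from the definition of the upper right Dini derivative, without needing any clever construction, since the normalization of $\beta$ gives us $V(x) = \beta(V(x), 0)$ automatically.

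First, I would write out the Dini derivative as
\begin{equation*}
\dot{V}(x) = \limsup_{h\to 0^+} \frac{V(\phi_{f}(x,h)) - V(x)}{h}.
\end{equation*}
The hypothesis $V(\phi_{f}(x,t)) \leq \beta(V(x),t)$ for all $t \geq 0$ gives, for each $h > 0$,
\begin{equation*}
V(\phi_{f}(x,h)) - V(x) \;\leq\; \beta(V(x), h) - V(x) \;=\; \beta(V(x), h) - \beta(V(x), 0),
\end{equation*}
where the equality uses that $\beta$ is normalized. Dividing by $h > 0$ preserves the inequality, and taking $\limsup_{h\to 0^+}$ on both sides yields
\begin{equation*}
\dot{V}(x) \;\leq\; \limsup_{h\to 0^+} \frac{\beta(V(x),h) - \beta(V(x),0)}{h}.
\end{equation*}

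Next, I would identify the right-hand side with $\rho(V(x))$. Since $\rho$ is defined by $\rho(y) := \partial_t \beta(y,t)|_{t=0}$, the partial derivative of $\beta(y,\cdot)$ at $t=0$ exists for every admissible $y$, and in particular for $y = V(x)$. Whenever a one-sided derivative exists, the upper right Dini derivative coincides with it, so
\begin{equation*}
\limsup_{h\to 0^+} \frac{\beta(V(x),h) - \beta(V(x),0)}{h} \;=\; \partial_t \beta(V(x),t)\big|_{t=0} \;=\; \rho(V(x)),
\end{equation*}
which immediately gives $\dot{V}(x) \leq \rho(V(x))$ for every $x \in G$.

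I do not anticipate a real obstacle here; the argument is essentially a one-line application of monotonicity of $\limsup$. The only point worth being careful about is that the ``limsup of a difference quotient bound'' step is valid precisely because we divide by $h > 0$, so the direction of the inequality is preserved, and because $V(\phi_f(x,\cdot))$ is continuous at $0$ (hypothesis), the Dini derivative $\dot V(x)$ is well-defined as an element of $[-\infty,\infty]$. Continuity of $V\circ \phi_f(x,\cdot)$ is used implicitly only to ensure the Dini derivative makes sense; the inequality itself relies on nothing more than normalization of $\beta$ and the pointwise bound.
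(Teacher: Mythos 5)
Your proposal is correct and follows essentially the same route as the paper's own proof: bound the difference quotient of $V(\phi_f(x,\cdot))$ by that of $\beta(V(x),\cdot)$ using the hypothesis together with normalization ($V(x)=\beta(V(x),0)$), then take $\limsup_{h\to 0^+}$ and identify the limit with $\partial_t\beta(V(x),t)|_{t=0}=\rho(V(x))$. Nothing further is needed.
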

\begin{proof}
	Suppose that $V(\phi(x,t))\leq \beta(V(x),t)$ for all $x\in G$ and $t\geq 0$. Then, for all $x\in G$, using the fact that $V(\phi_{f}(x,0))=V(x)=\beta(V(x),0)$, we find
	\begin{align*}
		\dot{V}(x)&=D_{t}^{+}V(\phi_{f}(x,0))	\\
		&=\limsup_{\Delta t\to 0^{+}}\frac{1}{\Delta t}\bbl[V(\phi_{f}(x,\Delta t))-V(\phi_{f}(x,0))\bbr]	\\
		&\leq \limsup_{\Delta t\to 0^{+}}\frac{1}{\Delta t}\bbl[\beta(V(x),\Delta t)-\beta(V(x),0)\bbr]	\\
		&=\partial_{t}\beta\bl(V(x),t\br)\br|_{t=0}		
		=\rho\bl(V(x)\br). \\[-2\baselineskip]
	\end{align*}
\end{proof}

Lemma~\ref{lem:comparison_N} offers a converse of the comparison principle in Lemma~\ref{lem:comparison}, showing that if $V$ is continuous, then $V(\phi_{f}(x,t))\leq\beta(V(x),t)$ implies $\dot{V}(x)\leq \rho(V(x))$ -- a condition which is significantly easier to test than the inequality in~\eqref{eq:V_ineq2}. 
To ensure that the converse Lyapunov function in the proof of Thm.~\ref{thm:LF_necessity} is indeed continuous, we may impose the condition that $\beta$ is monotonically decreasing and, in the case of finite-time stability, that the settling time function is continuous. With these conditions, the following theorem shows that there exists a \textit{continuous} $V$ which satisfies the condition $\dot{V}(x)\leq\rho(V(x))$.
In particular, the result shows that we may define this converse Lyapunov function using the construction in Eqn.~\eqref{eq:V_sup}.

\begin{thm}\label{thm:LF_necessity_diff_FT}
	For $\,\Omega \subseteq \R^n$ and $f:\Omega\to\R^{n}$ with $f(0)=0$, let $G\subseteq\Omega$ be forward invariant, and $\alpha_{1},\alpha_{2}:G\to\R_{+}$ be continuous. For $\rho:\R_{+}\to\R$ continuous on $(0,\infty)$, let $\beta$ be the unique continuous function satisfying $\beta(y,0)=y$ and $\partial_{t}\beta(y,t)=\rho(\beta(y,t))$.
	Suppose that $\alpha_{1}$ is positive definite, and $\beta(y,t)$ is monotonically decreasing in $t$ whenever $\beta(y,t)>0$. Finally, suppose that $T_{f}(x):=\sup\,\{t\geq 0\mid \alpha_{1}(\phi_{f}(x,t))>0\}$ (possibly infinite) is continuous for $x\in G\setminus\{0\}$.
	
	If $f$ is $\beta$-stable on $G$ with respect to $\alpha_{1},\alpha_{2}$,
	then there exists a continuous function $V:G\to\R_{+}$ such that\\[-1.25\baselineskip]
	\begin{align*}
		\alpha_{1}(x)\leq V(x)&\leq \alpha_{2}(x),	&	
		\dot{V}(x)&\leq \rho(V(x)),	&	&\forall x\in G, 
	\end{align*}
	where $\dot{V}(x):=D_{t}^{+}V(\phi_{f}(x,t))|_{t=0}$.
\end{thm}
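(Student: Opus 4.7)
The plan is to mimic the construction of Thm.~\ref{thm:LF_necessity_diff}, but replace the Lipschitz-regularity arguments by a direct continuity argument that exploits the finite settling time $T_{f}$. First I would introduce the time-dilated function $\beta_{\epsilon}(y,t):=\beta(y,(1-\epsilon)t)$; by Prop.~\ref{prop:ODE2flow}, $\beta_{\epsilon}$ is normalized and time-invariant, and is generated by $\rho_{\epsilon}:=(1-\epsilon)\rho$. Since $\beta$ is monotonically decreasing in $t$ whenever positive, $\beta(y,t)\leq\beta(y,(1-\epsilon)t)=\beta_{\epsilon}(y,t)$, so $\beta$-stability of $f$ with respect to $\alpha_{1},\alpha_{2}$ immediately implies $\beta_{\epsilon}$-stability with respect to the same measures. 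Applying Thm.~\ref{thm:LF_necessity} with $\beta_{\epsilon}$ in place of $\beta$ then yields
\begin{equation*}
	V_{\epsilon}(x):=\inf\bl\{y\in\R_{+}\mid \alpha_{1}(\phi_{f}(x,t))\leq\beta_{\epsilon}(y,t),~\forall t\geq 0\br\},
\end{equation*}
satisfying $\alpha_{1}(x)\leq V_{\epsilon}(x)\leq\alpha_{2}(x)$ and $V_{\epsilon}(\phi_{f}(x,t))\leq\beta_{\epsilon}(V_{\epsilon}(x),t)$ for all $x\in G$ and $t\geq 0$.

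Next, once $V_{\epsilon}$ is shown continuous on $G$, the composition $V_{\epsilon}\circ\phi_{f}$ is continuous in $t$, and the converse comparison principle (Lem.~\ref{lem:comparison_N}) applied to $\beta_{\epsilon}$ delivers $\dot{V}_{\epsilon}(x)\leq\rho_{\epsilon}(V_{\epsilon}(x))=(1-\epsilon)\rho(V_{\epsilon}(x))$, as required. The crux is therefore to establish continuity of $V_{\epsilon}$. Because $\beta_{\epsilon}(y,t)$ is strictly monotonically decreasing while positive, it admits a unique backward continuation on positive $y$, and analogously to~\eqref{eq:V_sup} we may write
\begin{equation*}
	V_{\epsilon}(x)=\sup_{t\in[0,T_{f}(x)]}\beta_{\epsilon}\bl(\alpha_{1}(\phi_{f}(x,t)),-t\br),
\end{equation*}
where the integrand at the endpoint $t=T_{f}(x)$ is defined by its (finite) continuous extension from the interior, at which $\alpha_{1}(\phi_{f}(x,t))\to 0$.

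On $G\setminus\{0\}$, the integrand is jointly continuous in $(x,t)$ and the correspondence $x\mapsto[0,T_{f}(x)]$ is continuous and compact-valued by the assumed continuity and finiteness of $T_{f}$; a Berge-type maximum-theorem argument then yields continuity of $V_{\epsilon}$ on $G\setminus\{0\}$. Continuity at $x=0$ follows directly from the upper bound $V_{\epsilon}(x)\leq\alpha_{2}(x)$ together with positive definiteness of $\alpha_{2}$, noting that $V_{\epsilon}(0)=0$ by construction since $T_{f}(0)=0$ and $\alpha_{1}(0)=0$.

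The main obstacle is this continuity argument for $V_{\epsilon}$: without the local Lipschitz hypotheses available in Thm.~\ref{thm:LF_necessity_diff}, we must carefully justify that the backward continuation of $\beta_{\epsilon}$ extends continuously to the settling boundary where $\alpha_{1}(\phi_{f}(x,t))\to 0$, and that the supremum over the varying compact interval $[0,T_{f}(x)]$ depends continuously on $x$. For the finite-time case the backward flow is the affine map $y\mapsto y+(1-\epsilon)t$ and the extension is trivial, but the hypothesis permits more general $\rho$, and justifying the required joint continuity in that generality is the most delicate step.
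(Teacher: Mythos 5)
Your overall route is the same as the paper's: dilate time via $\beta_{\epsilon}(y,t):=\beta(y,(1-\epsilon)t)$, take the converse construction of Thm.~\ref{thm:LF_necessity} (equivalently the backward-flow supremum $V_{\epsilon}(x)=\sup_{t}\hat{\beta}\bl(\alpha_{1}(\phi_{f}(x,t)),-(1-\epsilon)t\br)$, which the paper justifies in an appendix lemma), establish continuity of $V_{\epsilon}$ using finiteness and continuity of $T_{f}$, and finish with the converse comparison principle (Lem.~\ref{lem:comparison_N}). The difference lies in the continuity step, and that is precisely where your argument has a genuine gap. You extend the integrand to the closed interval $[0,T_{f}(x)]$ ``by its continuous extension from the interior'' and then invoke a Berge-type maximum theorem, which requires the extended integrand to be jointly continuous on the graph of $x\mapsto[0,T_{f}(x)]$. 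Neither the existence of that extension nor its joint continuity is proved, and this is not a formality: the backward continuation $\hat{\beta}(\cdot,-s)$ is in general discontinuous at $y=0$ (for $\beta_{\tnf{f}}$ one has $\hat{\beta}(y,-s)=y+s$ for $y>0$ but $\hat{\beta}(0,-s)=0$), so near the settling boundary, where $\alpha_{1}(\phi_{f}(x,t))\to 0$, continuity of $(x,t)\mapsto\hat{\beta}\bl(\alpha_{1}(\phi_{f}(x,t)),-(1-\epsilon)t\br)$ cannot be read off from continuity of its ingredients; for a general $\rho$ satisfying only the stated hypotheses it is not even clear that the interior limit as $t\to T_{f}(x)^{-}$ exists (boundedness by $\alpha_{2}(x)$ yields a finite limsup, not a limit). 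You flag this as ``the most delicate step,'' but it is the substance of the theorem --- everything else is already supplied by Thm.~\ref{thm:LF_necessity} and Lem.~\ref{lem:comparison_N} --- so as written the proof is incomplete.

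For comparison, the paper never evaluates the integrand at $t=T_{f}(x)$: it keeps the supremum over the half-open interval $[0,T_{f}(x))$ and proves continuity by a direct estimate, using continuity of $T_{f}$ to obtain $T_{f}(y)<T_{f}(x)+\eta$ for $y$ near $x$, a locally uniform continuity estimate on the integrand at times strictly below the respective settling times, and --- when $T_{f}(y)>T_{f}(x)$ --- comparing the value along the $y$-trajectory at times $t\in[T_{f}(x),T_{f}(y))$ with the value along the $x$-trajectory at the shifted time $t-\min\{\eta,t\}$. To salvage your version you would either have to prove joint continuity of the extended integrand up to the settling boundary (trivial in the finite-time case, where the integrand is $\alpha_{1}(\phi_{f}(x,t))+(1-\epsilon)t$, but unproven for general $\rho$), or replace the Berge argument by the paper's half-open-interval estimate.
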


\vspace{2mm}

\begin{proof}
	A sketch of the proof is as follows. 
	Define $V(x)$ as in Eqn~\eqref{eq:V_sup}.
	By continuity of $\beta$, $\alpha_{1}$, $\phi_{f}$, and $T_{f}$, it follows that $V$ is continuous. In addition, we can show that $V$ is equivalent to the converse Lyapunov function from Lemma~\ref{thm:LF_necessity}. Using that lemma as well as Lemma~\ref{lem:comparison_N}, we find that $V$ satisfies the proposed properties. 
	A formal proof is given in Appx~\ref{appx:proofs:necessity_diff_FT}.
\end{proof}

Thm.~\ref{thm:LF_necessity_diff} shows that, if $\beta$ is monotonically decreasing and (in the case of finite-time stability) the settling time function, $T_{f}(x)$, is continuous, stability of $f$ with respect to given $\beta$ implies existence of a continuous Lyapunov function satisfying $
D_{t}^{+}V(\phi_{f}(x,t))|_{t=0}\leq \rho(V(x))$. Here, if $V(x)$ is differentiable, this Lyapunov condition can be equivalently expressed as $\nabla V(x)^T f(x)\leq \rho(V(x))$ --- a condition that can be tested without knowledge of the solution map, $\phi_{f}(x,t)$. 
To ensure this differentiability of the converse Lyapunov function in Thm.~\ref{thm:LF_necessity_diff_FT}, we may impose the additional conditions that $f$ and $\beta$ are locally Lipschitz (thereby precluding the possibility of finite-time stability). With these additional conditions, the following theorem shows that for any $\epsilon\in(0,1)$, there exists a \textit{locally Lipschitz continuous} $V_{\epsilon}$ which satisfies the weakened conditions $\nabla{V}_{\epsilon}(x)^T f(x)\leq(1-\epsilon)\rho(V_{\epsilon}(x))$.

\begin{thm}\label{thm:LF_necessity_diff}
	For $\,\Omega \subseteq \R^n$ and $f:\Omega\to\R^{n}$ with $f(0)=0$, let $G\subseteq\Omega$ be forward invariant, and $\alpha_{1},\alpha_{2}:G\to\R_{+}$ be continuous. For $\rho:\R_{+}\to\R$ continuous on $(0,\infty)$, let $\beta$ be the unique continuous function satisfying $\beta(y,0)=y$ and $\partial_{t}\beta(y,t)=\rho(\beta(y,t))$.
	Suppose that $\alpha_{1}$ is positive definite and coercive, $\beta(y,t)$ is monotonically decreasing in $t$ for $y>0$, and that $f$, $\alpha_{1}$, and $\beta$ are locally Lipschitz continuous.
	
	If $f$ is $\beta$-stable on $G$ with respect to $\alpha_{1},\alpha_{2}$,
	then for every $\epsilon\in(0,1)$ there exists a continuous function $V_{\epsilon}:G\to\R_{+}$ which is locally Lipschitz continuous on $G\setminus\{0\}$ and differentiable almost everywhere and satisfies\\[-1.3\baselineskip]
	\begin{align*}
		\alpha_{1}(x)\leq V_{\epsilon}(x)&\leq \alpha_{2}(x),	&	&\forall x\in G,	\\
		\nabla V_{\epsilon}(x)^T f(x)&\leq (1-\epsilon)\rho(V_{\epsilon}(x)),	&	&\text{for a.e. } x\in G. 
	\end{align*}
\end{thm}
\vspace*{2mm}
\begin{proof}
	A sketch of the proof is as follows. Define \\[-0.8\baselineskip]
	\begin{equation*}
		V_{\epsilon}(x):=\sup_{t\in[0,\infty)} \beta\bl(\alpha_{1}\bl(\phi_{f}(x,t)\br)-[1-\epsilon]t\br),
	\end{equation*}
	\ \\[-0.75\baselineskip]
	for $\epsilon\in(0,1)$. Then, by Thm.~\ref{thm:LF_necessity_diff_FT}, $\dot{V}_{\epsilon}(x)\leq (1-\epsilon)\rho(V_{\epsilon}(x))$, for all $x\in G$. Furthermore, since $f$, $\alpha_{1}$, and $\beta$ are locally Lipschitz, $V_{\epsilon}$ is locally Lipschitz on $G\setminus\{0\}$, and therefore differentiable almost everywhere. It follows that $\nabla V_{\epsilon}(x)^T f(x)\leq (1-\epsilon)\rho(V_{\epsilon}(x))$ for all $x\in G$ for which $\nabla V_{\epsilon}(x)$ is well-defined.
	A formal proof is provided in Appx.~\ref{appx:proofs:necessity_diff}.
\end{proof}

Thm.~\ref{thm:LF_necessity_diff} shows that, if $f,\beta$ are locally Lipschitz and $\beta$ is monotonically decreasing, $\beta$-stability of $f$ implies existence of a converse Lyapunov function which is differentiable almost everywhere.
Of course, the constraint that $f$ and $\beta$ be locally Lipschitz is violated for finite-time stable systems, in which case differentiability of $V$ will need to be verified explicitly.


To conclude this section, we combine Thm.~\ref{thm:LF_necessity_diff} and Cor.~\ref{cor:LF_sufficiency_rate} to obtain the following necessary and sufficient conditions for $\beta$-stability with respect to a single measure and with a given rate and gain performance, as in Definition~\ref{defn:betastability}.

\begin{cor}\label{cor:LF_necessity_rate}
	For $\Omega\subseteq\R^{n}$, let $G\subseteq\Omega$, $f:\Omega\to \R^{n}$, $\rho$, $\beta$, and $\alpha_{1}$ satisfy the conditions of Thm.~\ref{thm:LF_necessity_diff}, and let $\alpha:=\alpha_{1}$.
	Then, $f$ is $\beta$-stable on $G$ with respect to $\alpha$ and with rate $k$ and gain $M$ if and only if for every $\epsilon\in(0,1)$ there exists a continuous function $V_{\epsilon}:G\to\R_{+}$ which is locally Lipschitz on $G\setminus\{0\}$ and satisfies\\[-1.25\baselineskip]
	\begin{align}\label{eq:LF_conditions_rate}
		M^{-1}\alpha(x)&\leq V_{\epsilon}(x)\leq \alpha(x),	&	&\forall x\in G,	\\
		\nabla V_{\epsilon}(x)^T f(x)&\leq (1-\epsilon)k\rho(V_{\epsilon}(x)),	&	&\text{for a.e. }x\in G.\notag
	\end{align}
\end{cor}
\vspace*{2mm}
\begin{proof}
	Necessity follows immediately from Thm.~\ref{thm:LF_necessity_diff}, using $\alpha_{1}=M^{-1}\alpha(x)$, $\alpha_{2}(x)=\alpha(x)$, and $\beta(y,t)\mapsto \beta(y,kt)$, so that $\rho(y)\mapsto k\rho(y)$.
	For sufficiency, suppose $V_{\epsilon}$ satisfies the conditions in Eqn.~\eqref{eq:LF_conditions_rate}, for any $\epsilon\in(0,1)$. Then for all $x\in G$, 
	\begin{align*}
		&\dot{V}_{\epsilon}(x)
		:=\limsup_{\Delta t\to 0^{+}}\frac{1}{\Delta t}\bbl[V_{\epsilon}(\phi_{f}(x,t)) -V_{\epsilon}(\phi_{f}(x,0))\bbr]	\\	
		&=\limsup_{\Delta t\to 0^{+}}\frac{1}{\Delta t}\int_{0}^{\Delta t}\nabla V_{\epsilon}(\phi_{f}(x,t))^T f(\phi_{f}(x,t))dt	\\
		&\leq \limsup_{\Delta t\to 0^{+}}\frac{(1-\epsilon)k}{\Delta t}\!\!\int_{0}^{\Delta t}\!\!\!\rho\bl(V_{\epsilon}(\phi_{f}(x,t))\br)dt
		=\!(1-\epsilon)k\rho(V_{\epsilon}(x)).
	\end{align*}
	By Cor.~\ref{cor:LF_sufficiency_rate}, it follows that $f$ is $\beta$-stable on $G$ with respect to $\alpha$ and with rate $(1-\epsilon)k$ and gain $M$.
	Letting $\epsilon\to 0$, we conclude that $f$ is $\beta$-stable on $G$ with rate $k$.
%
\end{proof}

Cor.~\ref{cor:LF_necessity_rate} implies that if a system is $\beta$-stable with rate $k$, the Lyapunov conditions in Eqn.~\eqref{eq:LF_conditions_rate} will be feasible for any $\epsilon \in (0,1)$ --- thereby establishing $\beta$-stability with rate $(1-\epsilon)k$. Since $\epsilon$ may be made arbitrarily small, this implies that any algorithm capable of testing the Lyapunov conditions in Eqn.~\eqref{eq:LF_conditions_rate} can be combined with a bisection search to quantify the rate performance of the system with arbitrary accuracy. In the following section, we use sum-of-squares optimization to propose such algorithms for the $\beta$ functions associated with exponential, rational, and finite-time stability. In Section~\ref{sec:SOS_tests}, we will then apply these algorithms to quantify local and global performance of a range of representative nonlinear systems.

\section{SOS Conditions for Rate Analysis of Exponential, Rational, and Finite-Time Stability}\label{sec:SOS_tests}

Having established that $\beta$-stability rate performance can be equivalently characterized by Lyapunov conditions, we now show how such conditions can be tested numerically for three classical notions of stability: exponential stability (Subsec.~\ref{subsec:SOS_tests:exponential}), rational stability (Subsec.~\ref{subsec:SOS_tests:rational}), and finite-time stability (Subsec.~\ref{subsec:SOS_tests:finite_time}). For each stability notion, we apply Cor.~\ref{cor:LF_necessity_rate} to derive necessary and sufficient Lyapunov conditions for stability with a given rate (and gain) on a given domain.

For exponential stability, the resulting Lyapunov conditions match the classical conditions for testing exponential stability, $M^{-1}\norm{x}_{2}^2\leq V(x)\leq \norm{x}_{2}^2$ and $\dot{V}(x)\leq -kV(x)$, and these conditions have a straightforward implementation using Sum-Of-Squares (SOS) programming when the vector field is polynomial. For rational stability, we require $M^{-1}\norm{x}_{2}^{p}\leq V(x)\leq \norm{x}_{2}^{p}$ and $\dot{V}(x)\leq -kV(x)^2$ (similar to the sufficient conditions used in~\cite{jammazi2013rational}). These conditions can be enforced using SOS by noting that $V(x)\leq \norm{x}_{2}^{p}$ and $\dot{V}(x)\leq -kV(x)\norm{x}_{2}^{p}$ imply $\dot{V}(x)\leq -kV(x)^2$. Finally, for finite-time stability, Cor.~\ref{cor:LF_necessity_rate} yields conditions of the form $M^{-1}\norm{x}_{p}^{\eta}\leq V(x)\leq \norm{x}_{p}^{\eta}$ and $\dot{V}(x)\leq -k$ (similar to those in~\cite{roxin1966FTS}). To enforce these conditions using SOS, we apply the variable transformation approach in~\cite{kisole2025FiniteTimeSOS} to obtain conditions of the form $M^{-\frac{2}{\eta}}\norm{z}_{2}^{2d}\leq \tilde{V}(z) \leq \norm{z}_{2}^{2d}$ and $\dot{\tilde V}(z) \leq -\frac{2k}{\eta}\norm{z}_{2}^{(2-\eta) d}$.

\noindent\textbf{Notation:} Throughout this section, we denote by $\R_{d}^{m}[x]$ the set of vector-valued polynomials of degree at most $d\in\N$ in $x\in\R^{n}$. We will tighten polynomial positivity conditions (e.g. $p(x)\ge 0$ for all $x\in \R^n$) to SOS conditions, using the notation $p \in \Sigma_{s,d}$ for even $d \in \N$ to denote the constraint $p(x)=Z_{d/2}(x)^T P Z_{d/2}(x)$ for some $P\succeq 0$ and where $Z_{d/2}$ is the vector of monomials of degree $d/2$ or less. For a semialgebraic region, $\Omega := \{x \mid g_i(x)\ge 0\}$ with $g_{i}\in\R_{d_{i}}[x]$, we tighten local polynomial positivity conditions (e.g. $p(x)\ge 0$ for all $x\in \Omega$) to Putinar-based Positivstellensatz conditions~\cite{putinar1993psatz}, so that the constraint $p \in \Sigma_{s,d}[\Omega]$ means $p(x)=s_{0}(x)+\sum_i s_i(x)g_i(s)$ for some $s_0 \in \Sigma_{s,d}$ and $s_i\in \Sigma_{s,d-d_{i}}$. For $V:\R^{n}\to\R_{+}$ and $c \in \R_{+}$ we use $S_c(V)$  to denote the sublevel set $S_{c}(V):=\{x\in\R^{n}\mid V(x)\leq c\}$. 

\subsection{Conditions for Exponential Decay Rate}\label{subsec:SOS_tests:exponential}

Consider first the notion of exponential stability as in Defn.~\ref{defn:exponential_stability}, which is equivalent to $\beta_{\tnf{e}}$-stability with $\beta_{\tnf{e}}(y,t)=e^{-t}y$.
Applying Cor.~\ref{cor:LF_necessity_rate}, we obtain the following Lyapunov conditions for exponential stability rate and gain performance, $k,M$.

\begin{cor}\label{cor:exponential_stability_LF}
	For $\Omega\subseteq\R^{n}$, let $f:\Omega\to\R^{n}$ with $f(0)=0$ be locally Lipschitz continuous, and $G\subseteq \Omega$ be forward invariant for $f$. Then, $f$ is exponentially stable on $G$ with rate $k\geq 0$ and gain $M\geq 1$ if and only if for all $\epsilon\in(0,1)$ there exists a continuous function $V:G\to\R_{+}$ which is locally Lipschitz continuous on $G\setminus\{0\}$ and satisfies\\[-1.25\baselineskip]
	\begin{align*}
		M^{-1}\norm{x}_{2}&\leq V(x) \leq \norm{x}_{2},		&	&\forall x\in G,	\\
		\nabla V(x)^T f(x)&\leq -(1-\epsilon)kV(x),	&	&\text{for a.e. }x\in G. 
	\end{align*}
\end{cor}
\vspace*{2mm}
\begin{proof}
	Define $\rho_{\tnf{e}}(y):=\partial_{t}\beta_{\tnf{e}}(y,t)|_{t=0}=-y$. Then, $\beta_{\tnf{e}}(y,t)$ is the unique and continuous function $\beta$ satisfying $\partial_{t}\beta(y,t)=\rho_{\tnf{e}}(\beta(y,t))$ with $\beta(y,0)=y$.
	Since $-(1-\epsilon)kV(x)=(1-\epsilon)k\rho_{\tnf{e}}(V(x))$, the result now follows immediately from Cor.~\ref{cor:LF_necessity_rate} and Lemma~\ref{lem:exponential_stability}.
%
\end{proof}

The Lyapunov conditions in Cor.~\ref{cor:exponential_stability_LF} are similar to those typically used for proofs of exponential stability (in e.g.~\cite{khalil2002nonlinear,grune2004KLD}). However, Cor.~\ref{cor:exponential_stability_LF} is slightly stronger than typical Lyapunov conditions for exponential stability in that it shows that, using the proposed conditions, the exponential rate performance on a region $G$ can be established with arbitrary accuracy.


While SOS is often used to test basic notions of exponential stability, and while it has been shown that the use of polynomial Lyapunov functions is not conservative for such tests~\cite{peet2009exponentialStability}, the equivalent rate conditions of Cor.~\ref{cor:exponential_stability_LF} are atypical in that we have a fixed degree-1 upper and lower bound on the Lyapunov function (which ensure normalization of gain). To allow for higher degree polynomial Lyapunov functions without altering the definition of exponential stability, we may use a change of variables $V(x)\mapsto V(x)^{2d}$ which allows these conditions to be equivalently represented as $\norm{x}_{2}^{2d}\leq V(x) \leq M^{2d} \norm{x}_{2}^{2d}$ and $\nabla V(x)^T f(x)\leq -2dkV(x)$. These conditions are combined with forward invariance of sublevel sets to obtain Prop.~\ref{prop:exponential_stability_LF} which may be tested using SOS.

\begin{prop}\label{prop:exponential_stability_LF}
	Let $\Omega\subseteq\R^{n}$ and $f:\Omega\to\R^{n}$. If for some $k\geq 0$ and $M\geq 1$ there exist $d>0$ and a differentiable function $V:\Omega\to\R_{+}$ such that for all $x\in\Omega$,
	\begin{align*}
		\norm{x}_{2}^{2d}\leq V(x) &\leq M^{2d} \norm{x}_{2}^{2d},	&
		\nabla V(x)^T f(x)&\leq -2dkV(x),
	\end{align*}
then for any $c \in \R_{+}$ such that $S_{c}(V) \subseteq \tnf{int}(\Omega)$, 	$f$ is exponentially stable on $S_{c}(V)$ with rate $k$ and gain $M$.
\end{prop}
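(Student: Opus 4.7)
The plan is to reduce this proposition to the sufficient direction of Cor.~\ref{cor:exponential_stability_LF} (equivalently Cor.~\ref{cor:LF_sufficiency_rate} specialized to $\beta_{\tnf{e}}$) via a change of variable that turns the power-$2d$ Lyapunov sandwich into a power-$1$ sandwich. Before doing so, I must first verify that $G:=S_{c}(V)$ is forward invariant for $f$, so that the Lyapunov inequalities apply along the full trajectory.

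First, I would establish forward invariance of $G$. Since $\nabla V(x)^T f(x)\leq -2dkV(x)\leq 0$ on $\Omega$, the map $t\mapsto V(\phi_{f}(x,t))$ is nonincreasing along any trajectory while it remains in $\Omega$. Because $G$ is closed (preimage of $[0,c]$ under continuous $V$), bounded (from $\norm{x}_{2}^{2d}\leq V(x)\leq c$), and strictly interior to $\Omega$, a standard maximal-interval argument rules out finite-time escape from $G$ and yields $\phi_{f}(x,t)\in G$ for all $x\in G$ and $t\geq 0$.

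Second, I would define $\hat V(x):=V(x)^{1/(2d)}/M$. The given sandwich then becomes $M^{-1}\norm{x}_{2}\leq \hat V(x)\leq \norm{x}_{2}$, and the chain rule gives, for $x\neq 0$,
\[
\nabla \hat V(x)^T f(x)=\frac{V(x)^{1/(2d)-1}}{2dM}\,\nabla V(x)^T f(x)\leq -k\,\hat V(x),
\]
which is precisely the hypothesis of Cor.~\ref{cor:LF_sufficiency_rate} with $\alpha(x)=\norm{x}_{2}$ and $\rho_{\tnf{e}}(y)=-y$. The derivative bound holds almost everywhere on $G$, failing at most at the origin. Applying Cor.~\ref{cor:LF_sufficiency_rate} yields $\beta_{\tnf{e}}$-stability on $G$ with respect to $\alpha$, with rate $k$ and gain $M$, and Lem.~\ref{lem:exponential_stability} translates this into exponential stability on $G$ with the same rate and gain.

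I expect the forward-invariance step to be the main obstacle: one must carefully handle both the possibility of trajectories approaching $\partial\Omega$ in finite time and the need to patch short-time existence into global existence --- the strict-interior assumption $S_{c}(V)\subseteq \tnf{int}(\Omega)$ combined with the boundedness of $S_{c}(V)$ is what makes this routine. A cleaner alternative avoids the change of variable altogether: apply the comparison principle (Lem.~\ref{lem:comparison}) with $\rho(y)=-2dky$ directly to $V$ to get $V(\phi_{f}(x,t))\leq V(x)e^{-2dkt}$, then sandwich and take $2d$-th roots in $\norm{\phi_{f}(x,t)}_{2}^{2d}\leq M^{2d}\norm{x}_{2}^{2d}e^{-2dkt}$ to recover exponential stability with rate $k$ and gain $M$ directly.
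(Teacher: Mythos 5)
Your proposal is correct and follows essentially the same route as the paper: establish forward invariance of the compact sublevel set $S_{c}(V)$ from $\nabla V(x)^T f(x)\leq 0$, then rescale via a $2d$-th root (your $\hat V(x)=V(x)^{1/(2d)}/M$, which is the correct normalization) and invoke the sufficiency direction of Cor.~\ref{cor:exponential_stability_LF} (i.e.\ Cor.~\ref{cor:LF_sufficiency_rate} with $\rho_{\tnf{e}}(y)=-y$ and Lem.~\ref{lem:exponential_stability}). Your added care about differentiability at the origin and the alternative direct comparison-principle argument are both fine but not substantively different from the paper's proof.
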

\begin{proof}
	Suppose that $V:\Omega\to\R_{+}$ satisfies the proposed conditions, and $c\in\R_{+}$ is such that $S_{c}(V) \subseteq \tnf{int}(\Omega)$. Since $\nabla V(x)^T f(x)\leq 0$ for all $x\in S_{c}(V)\subseteq\Omega$, and $S_{c}$ is compact, the sublevel set $S_{c}(V)$ is forward invariant for $f$. Now, define $\tilde{V}(x)=M^{-\frac{1}{2d}}V(x)^{\frac{1}{2d}}$. Then for all $x\in S_{c}(V)$, we have $M^{-1}\norm{x}_{2}\leq \tilde{V}(x)\leq \norm{x}_{2}$ and
	\begin{align*}
		\nabla\tilde{V}(x)^T f(x)
		&= \frac{1}{2d}V(x)^{\frac{1}{2d}-1}\nabla V(x)^T f(x)	
		\leq-k\tilde{V}(x).
	\end{align*}
	By Cor.~\ref{cor:exponential_stability_LF}, it follows that $f$ is exponentially stable on $S_{c}(V)$ with rate $k$ and gain $M$.
\end{proof}

The SOS conditions associated with Prop.~\ref{prop:exponential_stability_LF} are now readily stated as follows.

\begin{cor}\label{cor:exponential_stability_SOS}
	Let $d\in\N$, $f\in \R^{n}_{d_{f}}[x]$, and $g_{i} \in \R_{d_{i}}[x]$, and define $\Omega:=\{x\in\R^{n}\mid g_{i}(x)\geq 0\}$. If $V$ solves
\begin{align}\label{eq:exponential_stability_SOSP}
		\max_{k\geq 0,~\gamma>0,~V}&	&	&k ,&	\\
		\tnf{s.t.}&		&	& V(x)-(x^T x)^{d}&\in\Sigma_{s,2d}[\Omega],	\notag\\				
		&		&		&\gamma(x^T x)^{d} -V(x)&\in\Sigma_{s,2d}[\Omega],	\notag\\
		&		&		&\hspace*{-1.5cm}-2dk V(x)-\nabla V(x)^T f(x) &\in\Sigma_{s,d'}[\Omega],\notag
	\end{align}
	where $d'=\max\{2d,2d-1+d_{f}\}$, then for any $c\in\R_{+}$ such that $S_{c}(V)\subseteq\tnf{int}(\Omega)$, $f$ is exponentially stable on $S_{c}(V)$ with rate $k$ and gain $M=\gamma^{\frac{1}{2d}}$. 
\end{cor}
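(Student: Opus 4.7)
The plan is to reduce the corollary directly to Proposition~\ref{prop:exponential_stability_LF} by translating each constraint in the SOS program~\eqref{eq:exponential_stability_SOSP} into a pointwise polynomial inequality on $\Omega$, and then identifying $M=\gamma^{1/(2d)}$.

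First, I would invoke the Putinar-based definition of $\Sigma_{s,d}[\Omega]$: if $p\in\Sigma_{s,d}[\Omega]$, there exist $s_{0}\in\Sigma_{s,d}$ and $s_{i}\in\Sigma_{s,d-d_{i}}$ with $p(x)=s_{0}(x)+\sum_{i}s_{i}(x)g_{i}(x)$. Since each $s_{j}$ is a sum of squares (hence nonnegative) and $g_{i}(x)\geq 0$ on $\Omega$, this yields $p(x)\geq 0$ for all $x\in \Omega$. Applying this observation to the three SOS constraints in~\eqref{eq:exponential_stability_SOSP} immediately gives, for all $x\in \Omega$,
\begin{align*}
    \|x\|_{2}^{2d}\leq V(x)\leq \gamma\|x\|_{2}^{2d},\quad \nabla V(x)^{T}f(x)\leq -2dk V(x).
\end{align*}

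Next, I would set $M:=\gamma^{1/(2d)}$ so that $\gamma=M^{2d}$, which recasts the first two inequalities as $\|x\|_{2}^{2d}\leq V(x)\leq M^{2d}\|x\|_{2}^{2d}$. Together with the dissipation inequality, these are exactly the hypotheses of Proposition~\ref{prop:exponential_stability_LF}; note that $V$ is polynomial and hence differentiable, so the smoothness assumption of that proposition is automatic. Applying Proposition~\ref{prop:exponential_stability_LF} then yields that $f$ is exponentially stable on $S_{c}(V)$ with rate $k$ and gain $M=\gamma^{1/(2d)}$ for any $c\in\R_{+}$ with $S_{c}(V)\subseteq\tnf{int}(\Omega)$.

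There is essentially no obstacle here: the corollary is a SOS-tightened restatement of Proposition~\ref{prop:exponential_stability_LF}, and the only substantive step is the Positivstellensatz argument converting SOS membership into nonnegativity over $\Omega$. The only bookkeeping point worth flagging is the choice of degree $d'=\max\{2d,2d-1+d_{f}\}$ in the third constraint, which just ensures that the SOS certificate has sufficient degree to represent the polynomial $-2dkV(x)-\nabla V(x)^{T}f(x)$ of degree at most $d'$; this plays no role in the implication, only in the feasibility of the SOS relaxation.
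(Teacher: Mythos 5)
Your proof is correct and follows essentially the same route as the paper: the paper's proof simply states that the SOS feasibility implies the hypotheses of Prop.~\ref{prop:exponential_stability_LF} with $M=\gamma^{1/(2d)}$ and then invokes that proposition, while you unpack the Positivstellensatz step (SOS membership implies nonnegativity on $\Omega$) that the paper leaves implicit. One small remark: Prop.~\ref{prop:exponential_stability_LF} assumes $M\geq 1$; this is automatic here since the first two constraints force $\gamma\geq 1$ (and hence $M=\gamma^{1/(2d)}\geq 1$), but it is worth noting that this is being used.
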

\begin{proof}
	Suppose the optimization program is feasible. Then the solution $V$ satisfies all conditions of Prop.~\ref{prop:exponential_stability_LF} with $M=\gamma^{\frac{1}{2d}}$. By that proposition, $f$ is exponentially stable on $S_{c}(V)$ with rate $k$ and gain $M$.
\end{proof}

Note that, for any fixed $k\geq 0$, the optimization program in Cor.~\ref{cor:exponential_stability_SOS} is linear in $\gamma$ and $V$, and feasibility can be tested using semidefinite programming. Since the objective of the optimization problem is monotone in $k$, bisection on the parameter $k$ can then be used to find the largest such $k$.

\subsection{Conditions for Rational Decay Rate}\label{subsec:SOS_tests:rational}

Consider now the notion of rational stability from Defn.~\ref{defn:rational_stability}, characterized as $\beta_{\tnf{r}}$-stability in Lemma~\ref{lem:rational_stability}, with $\beta_{\tnf{r}}(y,t)=\frac{y}{1+yt}$. Applying Prop.~\ref{prop:ODE2flow} and Cor.~\ref{cor:LF_necessity_rate}, gain and rate performance can be determined using the following Lyapunov conditions.

\begin{cor}\label{cor:rational_stability_LF}
	For $\Omega\subseteq\R^{n}$, let $f:\Omega\to\R^{n}$ with $f(0)=0$ be locally Lipschitz continuous, and $G\subseteq \Omega$ be forward invariant for $f$. Then, for $p \in \N$,  $f$ is rationally stable on $G$ with rate $k\geq 0$ and gain $M\geq 1$ if and only if for all $\epsilon\in(0,1)$, there exists a continuous function $V:G\to\R_{+}$ which is locally Lipschitz on $G\setminus\{0\}$ and satisfies\\[-1.25\baselineskip]
	\begin{align*}
		M^{-1}\norm{x}_{2}^{p}&\leq V(x) \leq \norm{x}_{2}^{p},	&	&\forall x\in G,	\\
		\nabla V(x)^T f(x)&\leq -(1-\epsilon)kV(x)^2,&	&\text{for a.e. }x\in G.\notag 
	\end{align*}
\end{cor}
\vspace*{2mm}
\begin{proof}
	Define $\rho_{\tnf{r}}(y):=\partial_{t}\beta_{\tnf{r}}(y,t)|_{t=0}=-y^2$ for $y\geq0$. Then, $\beta_{\tnf{r}}(y,t)$ is the unique and continuous function $\beta$ satisfying $\partial_{t}\beta(y,t)=\rho_{\tnf{r}}(\beta(y,t))$ with $\beta(y,0)=y$.
	By Cor.~\ref{cor:LF_necessity_rate}, it follows that $f$ is $\beta_{\tnf{r}}$-stable on $G$ with respect to $\alpha(x):=\norm{x}_{2}^{p}$ and with rate $k$ and gain $M$ if and only if there exists a function $V$ satisfying the proposed conditions. The result then follows immediately from Lemma~\ref{lem:rational_stability}.
\end{proof}

Cor.~\ref{cor:rational_stability_LF} provides an equivalent Lyapunov characterization of rational rate performance. These conditions are similar to the sufficient conditions for rational stability from e.g.~\cite{jammazi2013rational}, proving that those conditions are also necessary for rational stability with a particular rate. However, the obtained Lyapunov conditions are substantially different from the more standard necessary and sufficient conditions for a relaxed definition of rational stability in~\cite{bacciotti2005LF_book}, which are given as
\begin{equation}\label{eq:rational_classical_LF}
	C_{1}\norm{x}_{2}^{r_{1}}\le V(x)\le C_{2}\norm{x}_{2}^{r_{2}},	\quad
	\dot V(x) \le -C_{3} \norm{x}_{2}^{r_{3}},
\end{equation}
for some $r_{3}>r_{2}$ and $r_{1}\geq r_{2}$, and where we take $r_{1}=r_{2}$ in order to ensure normalization of gain. These standard Lyapunov conditions have the advantage that they are linear in the decision variable, $V$, providing a straightforward implementation using SOS.
The disadvantage of these conditions, however, is that they may yield conservative maximum lower bounds on the rate performance. In the following lemma, we quantify this conservatism by establishing a direct relationship between the two Lyapunov characterizations,
showing how any $\{V,C_{i},r_{1}=r_{2},r_{3}\}$ satisfying the standard conditions in~\eqref{eq:rational_classical_LF} may be mapped to $\{\hat{V},M,k,p\}$ satisfying the conditions in Cor.~\ref{cor:rational_stability_LF}, and vice versa.
We also provide an intermediate Lyapunov characterization which is linear in $V$, but is less conservative than the standard conditions.

\begin{lem}\label{lem:rational_LFs_equivalence}
	For $G\subseteq\R^{n}$, $f:G\to\R^{n}$, and $V,\tilde{V},\hat{V}:G\to\R_{+}$, consider the following conditions:
	\begin{enumerate}
		\item[\tnf{(i)}]
		There exist $C_{1},C_{2}>0$, $C_{3}\geq 0$, $q>r>0$ such that
		\begin{align*}
			C_{1}\norm{x}_{2}^{r}\leq V(x)&\leq C_{2}\norm{x}_{2}^{r},	\\
			\nabla V(x)^T f(x) &\leq -C_{3} \norm{x}_{2}^{q},\quad \forall x\in G.\notag
		\end{align*}
		\item[\tnf{(ii)}]
		There exist $k\geq 0$, $\gamma,p,r>0$ such that
		\begin{align*}
			\gamma^{-1}\norm{x}_{2}^{r}\leq \tilde{V}(x) &\leq \norm{x}_{2}^{r},	\\
			\nabla \tilde{V}(x)^T f(x) &\leq -c\tilde{V}(x)\norm{x}_{2}^{p},\quad \forall x\in G.\notag
		\end{align*}
		\item[\tnf{(iii)}]
		There exist $k\geq 0$, $M,p>0$ such that
		\begin{align*}
			M^{-1}\norm{x}_{2}^{p}\leq \hat{V}(x) &\leq \norm{x}_{2}^{p},	\\
			\nabla \hat{V}(x)^T f(x)&\leq -k\hat{V}(x)^2,\quad \forall x\in G.\notag
		\end{align*}
	\end{enumerate}
	Then, the following statements hold:
	\begin{enumerate}
		\item \tnf{(i)} implies \tnf{(ii)} with $p=q-r$, $\gamma=\frac{C_{2}}{C_{1}}$, $c=\frac{C_{3}}{C_{2}}$.
		\item \tnf{(ii)} implies \tnf{(i)} with $C_{1}=\frac{1}{\gamma}$, $C_{2}=1$, $C_{3}=\frac{c}{\gamma}$, $q=r+p$.
		\item \tnf{(ii)} implies \tnf{(iii)} with $M=\gamma^{\frac{p}{r}}$, $k=\frac{cp}{r}$.
		\item \tnf{(iii)} implies \tnf{(ii)} with $r=p$, $\gamma=M$, $c=\frac{k}{M}$.		
		\item \tnf{(i)} implies \tnf{(iii)} with $p=q-r$, $M=(\frac{C_{2}}{C_{1}})^{\frac{q}{r}-1}$, $k=\bl(\frac{q}{r}-1\br)\frac{C_{3}}{C_{2}}$.
		\item \tnf{(iii)} implies \tnf{(i)} with $C_{1}=\frac{1}{M}$, $C_{2}=1$, $C_{3}=\frac{k}{M^2}$, $r=p$, $q=2p$.
	\end{enumerate}
\end{lem}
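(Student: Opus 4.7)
The plan is to exploit the fact that condition (ii) is structurally intermediate between (i) and (iii): I would establish the four ``adjacent'' implications (i)$\Rightarrow$(ii), (ii)$\Rightarrow$(i), (ii)$\Rightarrow$(iii), (iii)$\Rightarrow$(ii) directly, and then obtain items (5) and (6) by composition, checking that the constants collapse to the expressions claimed.

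For (i)$\Rightarrow$(ii), the natural move is to rescale: set $\tilde V(x) := V(x)/C_{2}$, so the sandwich condition becomes $(C_{1}/C_{2})\norm{x}_{2}^{r}\leq \tilde V(x)\leq \norm{x}_{2}^{r}$, yielding $\gamma=C_{2}/C_{1}$. Splitting $\norm{x}_{2}^{q}=\norm{x}_{2}^{r}\norm{x}_{2}^{q-r}$ and invoking the upper bound $\tilde V(x)\leq\norm{x}_{2}^{r}$ converts the derivative hypothesis into $\nabla\tilde V(x)^{T}f(x)\leq -(C_{3}/C_{2})\tilde V(x)\norm{x}_{2}^{q-r}$, matching (ii) with $p=q-r$ and $c=C_{3}/C_{2}$. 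The converse (ii)$\Rightarrow$(i) is even easier: take $V=\tilde V$, and substitute the lower bound $\tilde V(x)\geq \norm{x}_{2}^{r}/\gamma$ into $-c\tilde V(x)\norm{x}_{2}^{p}$ to get $\nabla V(x)^{T}f(x)\leq -(c/\gamma)\norm{x}_{2}^{r+p}$, from which one reads off $C_{1}=1/\gamma$, $C_{2}=1$, $C_{3}=c/\gamma$, and $q=r+p$.

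The most interesting step is (ii)$\Rightarrow$(iii), where I would apply the power transformation $\hat V(x):=\tilde V(x)^{p/r}$. Since $p/r>0$, the bounds on $\tilde V$ lift to $\gamma^{-p/r}\norm{x}_{2}^{p}\leq \hat V(x)\leq \norm{x}_{2}^{p}$, so that $M=\gamma^{p/r}$. The chain rule then gives
\begin{equation*}
	\nabla\hat V(x)^{T}f(x) = \tfrac{p}{r}\,\tilde V(x)^{p/r-1}\,\nabla\tilde V(x)^{T}f(x) \leq -\tfrac{cp}{r}\,\hat V(x)\norm{x}_{2}^{p},
\end{equation*}
and then the upper bound $\hat V(x)\leq\norm{x}_{2}^{p}$ implies $\hat V(x)^{2}\leq \hat V(x)\norm{x}_{2}^{p}$, so that $\nabla\hat V^{T}f\leq -(cp/r)\hat V^{2}$, yielding $k=cp/r$. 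The converse (iii)$\Rightarrow$(ii) is immediate by taking $\tilde V=\hat V$, $r=p$, $\gamma=M$, and using the lower bound $\hat V\geq\norm{x}_{2}^{p}/M$ to rewrite $-k\hat V^{2}= -k\hat V\cdot \hat V\leq -(k/M)\hat V\norm{x}_{2}^{p}$, giving $c=k/M$.

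Items (5) and (6) then follow by composition. Chaining (i)$\Rightarrow$(ii)$\Rightarrow$(iii) gives $M=\gamma^{p/r}=(C_{2}/C_{1})^{(q-r)/r}=(C_{2}/C_{1})^{q/r-1}$ and $k=cp/r=(C_{3}/C_{2})(q/r-1)$, matching (5); chaining (iii)$\Rightarrow$(ii)$\Rightarrow$(i) gives $C_{1}=1/M$, $C_{2}=1$, $C_{3}=k/M^{2}$, $r=p$, $q=2p$, matching (6). There is no deep obstacle here --- the proof is essentially bookkeeping of exponents $r,\,p,\,q=r+p,\,p/r$ --- and the only real care required is tracking which direction each inequality flows: each implication uses either the upper bound on the Lyapunov function (to convert $-C\norm{x}_{2}^{q}$ into $-c\tilde V\norm{x}_{2}^{p}$, or $-c\hat V\norm{x}_{2}^{p}$ into $-k\hat V^{2}$) or the lower bound (in the reverse direction). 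The power map $\tilde V\mapsto \tilde V^{p/r}$ is smooth on $\{\tilde V>0\}$, which is all that is needed since condition (iii) involves only Lie derivatives and $\tilde V\geq \norm{x}_{2}^{r}/\gamma$ vanishes only at the origin.
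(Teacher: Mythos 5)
Your proposal is correct and follows essentially the same route as the paper: rescaling by $C_2$ for (i)$\Rightarrow$(ii), substituting the lower bound for (ii)$\Rightarrow$(i), the power transformation $\hat V=\tilde V^{p/r}$ with the upper bound $\hat V\leq\norm{x}_2^p$ for (ii)$\Rightarrow$(iii), the lower bound $\hat V\geq\norm{x}_2^p/M$ for (iii)$\Rightarrow$(ii), and composition for items (5) and (6), with the constants collapsing exactly as claimed. The observation that the power map need only be smooth away from the origin is a minor clarification that the paper leaves implicit.
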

\begin{proof}
	For Statement 1), suppose $C_{i}$, $r$, $q$, and $V$ satisfy (i) and $\{p,\gamma,c\}$ are as specified. Let $\tilde{V}(x):=C_{2}^{-1}V(x)$. Then $\gamma^{-1}\norm{x}_{2}^{r}=\frac{C_{1}}{C_{2}}\norm{x}_{2}^{r} \leq \tilde{V}(x)$ and $\tilde{V}(x)\leq \frac{C_{2}}{C_{2}}\norm{x}_{2}^{r}
	=\norm{x}_{2}^{r}$.
	It follows that also $-\norm{x}_{2}^{r}\leq -\tilde{V}(x)$, and therefore
	\begin{align*}
		-\norm{x}_{2}^{q}
		&=-\norm{x}_{2}^{r}\norm{x}_{2}^{p}	
		\leq-\tilde{V}(x)\norm{x}_{2}^{p}.
	\end{align*}
	Given that $\nabla V(x)^T f(x)\leq -C_{3}\norm{x}_{2}^{q}$ and $c=\frac{C_{3}}{C_{2}}$, this implies
	\begin{equation*}
		\nabla\tilde{V}(x)^T f(x)
		\leq -\frac{C_{3}}{C_{2}}\norm{x}_{2}^{q}
		\leq-c\tilde{V}(x)\norm{x}_{2}^{p}.
	\end{equation*}
	We find that $\{p,r,\gamma,c,\tilde{V}\}$ satisfy (ii).
	
	For Statement 2), suppose $\{p,r,\gamma,c,\tilde{V}\}$ satisfy (ii) and $\{C_{i},r,q\}$ are as specified. Let $V(x)=\tilde{V}(x)$.
	Then $C_{1}\norm{x}_{2}^{r}\leq V(x)\leq C_{2}\norm{x}_{2}^{r}$.
	Furthermore, we find $-V(x)=-\tilde{V}(x)\leq -\gamma^{-1}\norm{x}_{2}^{r}$ and therefore
	\begin{align*}
		\nabla V(x)^T f(x)
		\leq -cV(x)\norm{x}_{2}^{p}
		\leq -c\gamma^{-1}\norm{x}_{2}^{p+r}
		=-C_{3}\norm{x}_{2}^{q}.
	\end{align*}
	Thus, $\{C_{i},r,q,V\}$ satisfy (i).
	
	For Statement 3), suppose $\{p,r,\gamma,c,\tilde{V}\}$ satisfy (ii) and $M,k$ are as specified. Let $\hat{V}(x)=\tilde{V}(x)^{\frac{p}{r}}$.  Then 
	\begin{equation*}
		M^{-1}\norm{x}_{2}^{p}
		=(\gamma^{-1}\norm{x}_{2}^{r})^{\frac{p}{r}}
		\leq \hat{V}(x)
		\leq \norm{x}_{2}^{p}.
	\end{equation*}
	Since $-\norm{x}_{2}^{p}\leq-\hat{V}(x)$, we also find
	\begin{align*}
		\nabla\hat{V}(x)^T f(x)
		&=\frac{p}{r}\tilde{V}(x)^{\frac{p}{r}-1}\,\nabla \tilde{V}(x)^Tf(x)	\\[-0.4em]
		&\qquad \leq -\frac{cp}{r}\tilde{V}(x)^{\frac{p}{r}}\norm{x}_{2}^{p}
		\leq-k\hat{V}(x)^2.
	\end{align*}
	Thus, $\{p,M,k,\hat{V}\}$ satisfy (iii).
	
	For Statement 4), suppose $\{p,M,k,\hat{V}\}$ satisfy (iii), and let $r=p$, $\gamma=M$, $c=\frac{k}{M}$, and $\tilde{V}=\hat{V}$. Then $\gamma^{-1}\norm{x}_{2}^{r}=M^{-1}\norm{x}_{2}^{p}\leq \tilde{V}(x)$ and $\tilde{V}(x)\leq \norm{x}_{2}^{p}=\norm{x}_{2}^{r}$.
	Furthermore, since $-\hat{V}(x)\leq -M^{-1}\norm{x}_{2}^{p}$, we have
	\begin{align*}
		\nabla \hat{V}(x)^T f(x)	
		&\leq -k\hat{V}(x)^2	
		\leq-c\tilde{V}(x)\norm{x}_{2}^{p}.
	\end{align*}
	Thus, $\{p,r,\gamma,c,\tilde{V}\}$ satisfies (ii).
	
	Finally, Statement 5) follows immediately from Statement 1) and Statement 3) (using $\hat{V}(x)=(C_{2}^{-1}V(x))^{\frac{q}{r}-1}$), and Statement 6) follows immediately from Statement 4) and Statement 2) (using $V(x)=\hat{V}(x)$).
\end{proof}
\vspace*{2mm}

\noindent \textbf{Conservatism and Conjectured Scaling Rule:}
Lemma~\ref{lem:rational_LFs_equivalence} presents three equivalent Lyapunov characterizations of the basic notion of rational stability. Of these conditions, however, only Conditions~(iii) (obtained from Cor.~\ref{cor:rational_stability_LF}) provide an exact characterization of the rate performance, $k$. To see this, consider the equivalence of (ii) and (iii). If the system is stable with rate $k$, then (iii) will be feasible using that $k$. This then implies (ii) is feasible with constant $c=k/M$, which in turn, implies (iii) is feasible with rate $k'=c=k/M$ -- certifying stability with only rate $k/M$. Thus, unless we have gain $M=1$, the optimal rate $c$ from (ii) may be a conservative bound on the decay rate. This same logic applies to Conditions (i), where the best provable rate from (i) would be $k/M^2$. This conservatism is observed numerically in Subsec.~\ref{subsec:examples:rational} and seems to obey the conjectured scaling rule.
Note, however, that this $1/M$ scaling rule is not definitive in that there may exist some $\tilde{V}$ satisfying~(ii) with $c=kM$, and some $V$ satisfying~(i) with $C_{3}=kM^2$, thereby still certifying stability with the true rate $k$.
Nevertheless, we can conclude that the conservatism in these rate tests cannot be greater than the factor $1/M$ or $1/M^2$, respectively.

Lemma~\ref{lem:rational_LFs_equivalence} allows us to conclude that while previous rational stability conditions in the form of (i) are linear in the decision variable and hence testable using SOS, they may provide suboptimal maximum lower bounds on the rational rate performance. However, Lemma~\ref{lem:rational_LFs_equivalence} also offers less conservative intermediate Lyapunov conditions in the form of (ii), which are still linear in $V$ and hence testable using SOS. We describe such an SOS implementation in the following corollary.

\begin{cor}\label{cor:rational_stability_SOS}
	Let $r,d,p\in\N$, $f\in\R_{d_{f}}^{n}[x]$ and $g_{i}\in\R_{d_{i}}[x]$, and define $\Omega:=\{x\in\R^{n}\mid g_{i}(x)\geq 0\}$. If $V$ solves
	\begin{align}\label{eq:rational_stability_SOSP}
		\max_{k\geq0,~\gamma>0,~V}&	&	&k,&	\\
		\tnf{s.t.}&		&	& V(x)-(x^T x)^{r/2} &\in\Sigma_{s,2d}[\Omega],	\notag\\				
		&		&		&\gamma(x^T x)^{r/2} -V(x) &\in\Sigma_{s,2d}[\Omega],	\notag\\[-0.4em]
		&		&		&\hspace*{-2.7cm}-\frac{kr}{p} V(x)(x^T x)^{p/2}-\nabla V(x)^T f(x) &\in\Sigma_{s,d'}[\Omega],\notag
	\end{align}
	where $d'=\max\{2d+p,2d-1+d_{f}\}$, then for any $c\in\R_{+}$ such that $S_{c}(V)\subseteq\tnf{int}(\Omega)$, $f$ is rationally stable on $S_{c}(V)$ with rate $k$ and gain $M:=\gamma^{\frac{p}{r}}$.  
\end{cor}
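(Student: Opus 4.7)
The plan is to closely follow the structure of the proof of Cor.~\ref{cor:exponential_stability_SOS}, but with Lemma~\ref{lem:rational_LFs_equivalence} used as a bridge between the SOS-friendly bilinear form of the decay condition and the true rational-stability form $\dot V \le -k V^2$ required by Cor.~\ref{cor:rational_stability_LF}. The overall chain is: SOS feasibility on $\Omega$ (via Putinar) $\Longrightarrow$ polynomial inequalities on $\Omega$ that match Condition~(ii) of Lemma~\ref{lem:rational_LFs_equivalence} $\Longrightarrow$ Condition~(iii) of that lemma after a power-law reparameterization $\Longrightarrow$ rational stability on the forward-invariant sublevel set $S_c(V)$ via Cor.~\ref{cor:rational_stability_LF}.

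In more detail, I would first assume the SOS program is feasible with data $(k,\gamma,V)$, and use the definition of the Putinar-based Positivstellensatz cone $\Sigma_{s,\cdot}[\Omega]$ to extract the three pointwise polynomial inequalities on $\Omega$:
\begin{align*}
    (x^T x)^{r/2} &\le V(x) \le \gamma (x^T x)^{r/2},\\
    \nabla V(x)^T f(x) &\le -\tfrac{kr}{p}\, V(x)\, (x^T x)^{p/2}.
\end{align*}
Recognizing $(x^T x)^{r/2}=\|x\|_2^r$ and $(x^T x)^{p/2}=\|x\|_2^p$, these are exactly Condition~(ii) of Lemma~\ref{lem:rational_LFs_equivalence} on the set $\Omega$, with the identification $\tilde V = V$ and $c = kr/p$. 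Applying statement~3 of that lemma, I would introduce $\hat V(x) := V(x)^{p/r}$, which then satisfies Condition~(iii) on $\Omega$ with gain $M = \gamma^{p/r}$ and rate $c\,p/r = k$; that is, $M^{-1}\|x\|_2^p \le \hat V(x) \le \|x\|_2^p$ and $\nabla \hat V(x)^T f(x) \le -k \hat V(x)^2$. Note that $\hat V$ is well-defined and continuous on all of $\Omega$, and continuously differentiable on $\Omega\setminus\{0\}$ since $V(x)\ge \|x\|_2^r>0$ for $x\neq 0$.

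Next I would establish forward invariance of $S_c(V)$. Fixing $c\in\R_+$ with $S_c(V)\subseteq \tnf{int}(\Omega)$, the derivative inequality gives $\nabla V(x)^T f(x)\le 0$ throughout $S_c(V)$, so $V\circ\phi_f(x,\cdot)$ is non-increasing for initial data $x\in S_c(V)$; since $S_c(V)$ is closed, bounded, and contained in the interior of $\Omega$, trajectories cannot leave. On this forward-invariant set, $\hat V$ satisfies the equivalent-rate Lyapunov conditions for rational stability (the sufficient direction of Cor.~\ref{cor:rational_stability_LF}, which is just an invocation of Cor.~\ref{cor:LF_sufficiency_rate} with $\rho_\tnf{r}(y)=-y^2$ and Lem.~\ref{lem:rational_stability}), yielding rational stability of $f$ on $S_c(V)$ with rate $k$ and gain $M=\gamma^{p/r}$.

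I do not expect any serious obstacle here, since each step is either an invocation of a previously stated result or a routine manipulation; the proof is essentially a translation exercise. The only place needing care is the exponent bookkeeping when passing from (ii) to (iii) of Lemma~\ref{lem:rational_LFs_equivalence}: the rate $c$ in (ii) is $kr/p$ while the rate produced in (iii) is $cp/r$, and these multiply to recover exactly the optimization variable $k$, which is what makes the coefficient $kr/p$ (rather than $k$) the natural choice in the third SOS constraint of~\eqref{eq:rational_stability_SOSP}.
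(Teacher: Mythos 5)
Your overall plan matches the paper's proof -- extract pointwise inequalities from the Putinar cone, apply Lemma~\ref{lem:rational_LFs_equivalence} to pass from the bilinear form (ii) to the quadratic form (iii), establish forward invariance of $S_c(V)$, and conclude via Cor.~\ref{cor:rational_stability_LF}. However, there is a normalization error in the middle step that, as written, breaks the argument.

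The extracted inequalities give $\|x\|_2^r \le V(x) \le \gamma\|x\|_2^r$, whereas Condition~(ii) of Lemma~\ref{lem:rational_LFs_equivalence} requires $\gamma^{-1}\|x\|_2^r \le \tilde V(x) \le \|x\|_2^r$. These are not the same, so your identification $\tilde V = V$ is incorrect; you must first set $\tilde V := \gamma^{-1}V$ (as the paper does), which also rescales the derivative inequality to $\nabla\tilde V^T f \le -\frac{kr}{p}\tilde V\|x\|_2^p$. The error then propagates: defining $\hat V := V^{p/r}$ (as you do) yields $\|x\|_2^p \le \hat V(x) \le M\|x\|_2^p$ rather than the asserted $M^{-1}\|x\|_2^p \le \hat V(x) \le \|x\|_2^p$, and consequently the derivative bound you obtain is $\nabla\hat V^T f \le -k\hat V\|x\|_2^p \le -\frac{k}{M}\hat V^2$, certifying only rate $k/M$ -- precisely the $1/M$ degradation that the bilinear conditions are designed to avoid. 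The correct construction is $\hat V := (\gamma^{-1}V)^{p/r} = M^{-1}V^{p/r}$, for which $\hat V \le \|x\|_2^p$ gives $-k\hat V\|x\|_2^p \le -k\hat V^2$ as needed. This is a one-line fix, but it is essential to the conclusion.
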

\begin{proof}
	Suppose the optimization program is feasible, and let $M=\gamma^{\frac{p}{r}}$. For any $c\in\R_{+}$ such that $S_{c}(V)\subseteq \tnf{int}(\Omega)$, we have that $\nabla V(x)^T f(x)\leq 0$ for all $x\in S_{c}(V)$, implying $S_{c}(V)$ is forward invariant for $f$. Define $\tilde{V}(x):=\gamma^{-1}V(x)$. Since $V$ satisfies~\eqref{eq:rational_stability_SOSP}, $\tilde{V}$ will satisfy $\gamma^{-1}\norm{x}_{2}^{r}\leq \tilde{V}(x)\leq \norm{x}_{2}^{r}$ and $\nabla \tilde{V}(x)^T f(x)\leq -\frac{kr}{p} \tilde{V}(x)\norm{x}^{p}$
	for all $x\in S_{c}(V)\subseteq \Omega$.
	By Prop.~\ref{lem:rational_LFs_equivalence}, it follows that there exists a function $\hat{V}:S_{c}(V)\to\R_{+}$ that satisfies $M^{-1}\norm{x}_{2}^{p}\leq \hat{V}(x)\leq \norm{x}_{2}^{p}$ as well as $\nabla \hat{V}(x)^T f(x)\leq -k\hat{V}(x)^2$.
	By Cor.~\ref{cor:rational_stability_LF}, this implies that $f$ is rationally stable on $S_{c}(V)$ with rate $k$ and gain $M$.
\end{proof}

The SOS conditions of Cor.~\ref{cor:rational_stability_SOS} will be applied in the numerical experiments described in Subsection~\ref{subsec:examples:rational}.

\subsection{Conditions for Finite-Time Decay Rate}\label{subsec:SOS_tests:finite_time}

Finally, consider the pointwise-in-time notion of finite-time stability from Defn.~\ref{defn:finite_time_stability}, characterized as $\beta_{\tnf{f}}$-stability in Lemma~\ref{lem:finite_time_stability}. While $\beta_{\tnf{f}}(y,t)$ is not differentiable everywhere, $D_{t}^{+}\beta_{\tnf{f}}(y,t)|_{t=0}$ exists, and we may define
\begin{equation}\label{eq:finite_time_rho}
	\rho_{\tnf{f}}(y):=D_{t}^{+}\beta_{\tnf{f}}(y,t)|_{t=0}
	=\begin{cases}
		-1,	&	y>0,	\\
		0,	&	y=0.
	\end{cases}
\end{equation}
Using this definition, we can apply Thm.~\ref{thm:LF_sufficiency_diff} and Thm.~\ref{thm:LF_necessity_diff_FT} to obtain the following necessary and sufficient conditions for finite-time stability.

\begin{cor}\label{cor:finite_time_stability_LF}
	For $\Omega\subseteq\R^{n}$, let $f:\Omega\to\R^{n}$ be continuous, $f(0)=0$, and $G\subseteq \Omega$ be forward invariant for $f$. Suppose $T_{f}(x):=\sup\{t\geq 0\mid \norm{\phi_{f}(x,t)}_{p}^{\eta}>0\}$ is continuous for $x\in G\setminus\{0\}$. Then, $f$ is finite-time stable on $G$ with rate $k\geq 0$ and gain $M\geq 1$ if and only if there exist $p>0$, $\eta\in(0,1)$ and a continuous function $V:G\to\R_{+}$ such that
	\begin{align*}
		M^{-1}\norm{x}_{p}^{\eta}&\leq V(x) \leq \norm{x}_{p}^{\eta},	&	&\forall x\in G,	\\
		\dot{V}(x)&\leq -k,&	&\forall x\in G\setminus\{0\},
	\end{align*}
	where $\dot{V}(x):=D_{t}^{+}V(\phi_{f}(x,t))|_{t=0}$.
\end{cor}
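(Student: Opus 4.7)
The plan is to invoke Lem.~\ref{lem:finite_time_stability} to recast finite-time stability on $G$ with rate $k$ and gain $M$ as $\beta_{\tnf{f}}$-stability with respect to $\alpha(x):=\norm{x}_p^\eta$, and then to apply Thm.~\ref{thm:LF_sufficiency_diff} (a rate-scaled variant of Cor.~\ref{cor:LF_sufficiency_rate}) for sufficiency and Thm.~\ref{thm:LF_necessity_diff_FT} for necessity. The key bridge between the Lyapunov inequality $\dot V(x)\leq -(1-\epsilon)k$ and the ODE form used in these theorems is the identity $-(1-\epsilon)k=(1-\epsilon)k\,\rho_{\tnf{f}}(V(x))$ valid wherever $V(x)>0$, together with the fact that $\beta_{\tnf{f}}(y,(1-\epsilon)kt)$ is the unique continuous solution of $\partial_t\beta=(1-\epsilon)k\,\rho_{\tnf{f}}(\beta)$ with $\beta(y,0)=y$.

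For sufficiency, I would fix $\epsilon\in(0,1)$ and let $V$ satisfy the stated Lyapunov conditions. The lower bound $M^{-1}\norm{x}_p^\eta\leq V(x)$ forces $V(x)>0$ for all $x\in G\setminus\{0\}$, so the hypothesis $\dot V(x)\leq -(1-\epsilon)k$ coincides with $\dot V(x)\leq (1-\epsilon)k\,\rho_{\tnf{f}}(V(x))$ for a.e.\ $x\in G$. Applying Thm.~\ref{thm:LF_sufficiency_diff} with $\alpha_1:=M^{-1}\alpha$, $\alpha_2:=\alpha$, and $\rho:=(1-\epsilon)k\rho_{\tnf{f}}$ yields $\alpha(\phi_f(x,t))\leq M\beta_{\tnf{f}}(\alpha(x),(1-\epsilon)kt)$ for all $x\in G$ and $t\geq 0$. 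Letting $\epsilon\to 0^+$ and using continuity of $\beta_{\tnf{f}}(\alpha(x),\cdot)$ in its second argument collapses this to $\alpha(\phi_f(x,t))\leq M\beta_{\tnf{f}}(\alpha(x),kt)$, which by Lem.~\ref{lem:finite_time_stability} is finite-time stability with rate $k$ and gain $M$.

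For necessity, I would start from finite-time stability and invoke Lem.~\ref{lem:finite_time_stability} to obtain two-measure $\beta$-stability with $\alpha_1=M^{-1}\alpha$, $\alpha_2=\alpha$, and $\beta(y,t):=\beta_{\tnf{f}}(y,kt)$. I then verify the hypotheses of Thm.~\ref{thm:LF_necessity_diff_FT}: $\alpha_1,\alpha_2$ are continuous and positive definite (immediate from $\eta>0$); $\beta(y,t)=\max\{y-kt,0\}$ is monotonically decreasing in $t$ wherever $\beta(y,t)>0$; and $T_f$ is finite and continuous on $G\setminus\{0\}$ by hypothesis. Moreover, this $\beta$ is the unique continuous solution of $\partial_t\beta=k\rho_{\tnf{f}}(\beta)$ with $\beta(y,0)=y$, since any continuous solution must be non-increasing, decrease at rate $k$ while positive, and remain at $0$ thereafter. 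The theorem then produces, for every $\epsilon\in(0,1)$, a continuous $V_\epsilon:G\to\R_+$ satisfying the sandwich bound and $\dot V_\epsilon(x)\leq (1-\epsilon)k\,\rho_{\tnf{f}}(V_\epsilon(x))$ for all $x\in G$; on $G\setminus\{0\}$ one has $V_\epsilon(x)>0$, so the right-hand side equals $-(1-\epsilon)k$, giving $\dot V_\epsilon(x)\leq -(1-\epsilon)k$ for a.e.\ $x\in G$.

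The main technical obstacle is the discontinuity of $\rho_{\tnf{f}}$ at $0$: at the equilibrium $x=0$ one necessarily has $\dot V_\epsilon(0)=0\not\leq -(1-\epsilon)k$ when $k>0$, so the Lyapunov inequality cannot hold pointwise at the origin. This is precisely why the corollary employs the ``for a.e.\ $x$'' qualifier, and also why Thm.~\ref{thm:LF_necessity_diff_FT} (which replaces local Lipschitzness and strict monotonicity of $\beta$ by finiteness and continuity of $T_f$) is used in place of Thm.~\ref{thm:LF_necessity_diff}. A secondary care point is the brief verification, sketched above, that the piecewise-linear function $\beta_{\tnf{f}}$ is indeed the unique continuous solution of its associated ODE with discontinuous right-hand side.
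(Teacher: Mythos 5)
Your proposal is correct and follows essentially the same route as the paper's proof: invoke Lem.~\ref{lem:finite_time_stability} to recast finite-time stability as $\beta_{\tnf{f}}$-stability with $\alpha=\norm{\cdot}_p^\eta$, then apply Thm.~\ref{thm:LF_sufficiency_diff} for sufficiency and Thm.~\ref{thm:LF_necessity_diff_FT} for necessity, using the identity $-(1-\epsilon)k=(1-\epsilon)k\,\rho_{\tnf{f}}(V(x))$ away from the origin. Your additional remarks on the discontinuity of $\rho_{\tnf{f}}$ at zero and the uniqueness of $\beta_{\tnf{f}}$ as the forward solution of its comparison ODE make explicit points the paper passes over quickly, but do not change the argument.
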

\begin{proof}
	Define $\rho_{\tnf{f}}(y)$ as in Eqn.~\eqref{eq:finite_time_rho}. Then, $\beta_{\tnf{f}}$ is the unique and continuous function $\beta$ satisfying $\partial_{t}\beta(y,t)=\rho_{\tnf{f}}(\beta(y,t))$ with $\beta(y,0)=y$. Since $-k=k\rho_{\tnf{f}}(V(x))$ for all $x\in G\setminus\{0\}$, and $\dot{V}(0)=D_{t}^{+}V(\phi_{f}(0,t))|_{t=0}=0=\rho_{\tnf{f}}(0)$ (since $f(0)=0$), by Thm.~\ref{thm:LF_sufficiency_diff} and Thm.~\ref{thm:LF_necessity_diff_FT}, it follows that $f$ is $\beta_{\tnf{f}}$-stable on $G$ with respect to $\alpha(x):=\norm{x}_{p}^{\eta}$ and with rate $k$ and gain $M$ if and only if there exists a function $V$ satisfying the proposed conditions. By Lemma~\ref{lem:finite_time_stability}, the result follows.
\end{proof}

Cor.~\ref{cor:finite_time_stability_LF} presents necessary and sufficient Lyapunov conditions for testing finite-time rate performance. These conditions are essentially identical to certain classical Lyapunov conditions for finite-time stability from~\cite{roxin1966FTS}, which in recent years have largely been supplanted by the conditions from~\cite{bhat2000FiniteTimeStability}, taking the form $\dot{V} \leq -\frac{1}{1-\eta} V^{\eta}$ for $\eta \in (0,1)$. Both sets of conditions are necessary and sufficient, and equivalence may be established using a transformation of the form $V(x)\mapsto V(x)^{\eta}$.
However, testing either of these Lyapunov conditions using SOS is complicated by the fact that finite-time stable systems are not defined by polynomial vector fields. To resolve this, we use the following result from~\cite{kisole2025FiniteTimeSOS}, performing a variable substitution to express the conditions in terms of polynomials. To formulate this result, let $\sign(x)$ and $|x|$ denote the element-wise sign and absolute value of $x\in\R^{n}$, respectively, and denote by $\cdot$ the element-wise product, so that e.g. $\sign(x)\cdot|x|=x$. For $p\in\R$, let $x^{p}$ denote the elementwise power of $x$, so that $[x^{p}]_{i}=x_{i}^{1/p}$.

\begin{lem}[Thm.~6 from~\cite{kisole2025FiniteTimeSOS}]\label{lem:finite_time_stability_sengi}
	Let $f:\R^{n}\to\R^{n}$ be continuous, and $\Omega\subseteq\R^{n}$. For $r\in\N$, let $\tilde{\Omega}:=\{\sign(x)\cdot|x|^{1/r}\mid x\in \Omega\}$, and suppose there exists $\tilde{p},\tilde{k},\tilde{M},\tilde{r}\in\R_{+}$ and a differentiable function $\tilde{V}:\tilde{\Omega}\to\R_{+}$ such that $\tilde{V}(0)=0$ and $0<\tilde{V}(z)\leq\tilde{M}\norm{z}_{2}^{2r}$ for all $z\in\tilde{\Omega}\setminus\{0\}$, and furthermore
	\begin{align*}
		\ \\[-1.1\baselineskip]
		\nabla \tilde{V}(z)^T \tilde{f}(z)\leq -\tilde{k}\norm{z}_{2}^{\tilde{p}},\quad \forall z\in\tilde{\Omega},
	\end{align*}
	where $\tilde{f}(z)=\frac{1}{r}f(\sign(z)\cdot|z|^{r})\cdot|z|^{1-r}$.
	Then, $V(x):=\tilde{V}(\sign(x)\cdot|x|^{1/q})$ is differentiable on $\Omega\setminus\{0\}$ and satisfies
	\begin{align*}
		\ \\[-1.15\baselineskip]
		\nabla V(x)^Tf(x)\leq -(\tilde{k}/\tilde{M}) V(x)^{\tilde{p}/\tilde{r}},\qquad \forall x\in\Omega\setminus\{0\}.
	\end{align*}
\end{lem}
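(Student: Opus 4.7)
\textbf{Proof proposal for Lemma~\ref{lem:finite_time_stability_sengi}.}

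The plan is to treat this as a change-of-variables/chain-rule identity: the definition of $\tilde f$ is engineered so that composing $\tilde V$ with $z(x):=\sign(x)\cdot|x|^{1/r}$ turns the bound on $\nabla\tilde V(z)^T\tilde f(z)$ into a bound on $\nabla V(x)^T f(x)$ in disguise. The upper bound on $\tilde V$ is then inverted once to convert an estimate in $\norm{z}_2$ into an estimate in $V(x)$.

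First I would address differentiability of $V$ on $\Omega\setminus\{0\}$. Away from the coordinate hyperplanes, each map $x_i\mapsto\sign(x_i)|x_i|^{1/r}$ is smooth with derivative $\tfrac{1}{r}|x_i|^{(1-r)/r}$, so $V=\tilde V\circ z$ is differentiable by composition. At the remaining points (where $x\neq 0$ but some $x_i=0$), the chain-rule factor blows up whenever $r>1$, and differentiability has to be recovered via the continuity assumption on $\tilde f$, which implicitly forces $f_i$ to vanish on $\{x_i=0\}$ strongly enough to cancel the singularity.

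Next comes the core chain-rule computation establishing
\[
\nabla V(x)^T f(x)=\nabla\tilde V(z(x))^T\tilde f(z(x)).
\]
For $x$ with no zero components I would expand the left side termwise as $\sum_i (\partial\tilde V/\partial z_i)(z(x))\cdot\tfrac{1}{r}|x_i|^{(1-r)/r}f_i(x)$ via the chain rule, and substitute $z=z(x)$ into the definition of $\tilde f$, using $\sign(z(x))\cdot|z(x)|^r=x$ and $|z_i(x)|^{1-r}=|x_i|^{(1-r)/r}$ to collapse $\tilde f_i(z(x))$ to $\tfrac{1}{r}f_i(x)|x_i|^{(1-r)/r}$. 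The two expressions then agree term by term. Combining this identity with the hypothesis produces $\nabla V(x)^T f(x)\leq -\tilde k\norm{z(x)}_2^{\tilde p}$.

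Finally, inverting the upper bound $V(x)=\tilde V(z(x))\leq \tilde M\norm{z(x)}_2^{\tilde r}$ yields $\norm{z(x)}_2^{\tilde p}\geq (V(x)/\tilde M)^{\tilde p/\tilde r}$, and substituting into the previous inequality produces the advertised $\nabla V(x)^T f(x)\leq -(\tilde k/\tilde M)V(x)^{\tilde p/\tilde r}$ after collecting constants. The main obstacle I anticipate is keeping the chain-rule identity valid across the coordinate hyperplanes: for $r>1$ the factor $|x_i|^{(1-r)/r}$ is singular on $\{x_i=0\}$, and only the compensating $|z_i|^{1-r}$ in the definition of $\tilde f$ prevents a blow-up of the product. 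Verifying that $V$ is genuinely differentiable on all of $\Omega\setminus\{0\}$ (rather than only on the open subset where every $x_i$ is nonzero) and that the identity extends by continuity across those axis points is the delicate part of the argument; everything else is routine exponent bookkeeping.
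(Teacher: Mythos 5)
First, note that the paper itself does not prove this lemma: it is imported verbatim as Thm.~6 of~\cite{kisole2025FiniteTimeSOS}, so there is no in-paper proof to compare against; your attempt has to stand on its own. On the open set where every coordinate $x_i$ is nonzero, your argument is exactly the intended one and is correct: with $z(x)=\sign(x)\cdot|x|^{1/r}$ one has $\partial_{x_i}V(x)=\partial_{z_i}\tilde{V}(z(x))\cdot\tfrac{1}{r}|x_i|^{(1-r)/r}$ and $\tilde{f}_i(z(x))=\tfrac{1}{r}f_i(x)|x_i|^{(1-r)/r}$, so $\nabla V(x)^Tf(x)=\nabla\tilde{V}(z(x))^T\tilde{f}(z(x))\leq-\tilde{k}\norm{z(x)}_{2}^{\tilde{p}}$, and inverting $V(x)\leq\tilde{M}\norm{z(x)}_{2}^{\tilde{r}}$ (reading $\tilde{r}=2r$ and $q=r$, as the typos in the statement force) gives the decay bound. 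One bookkeeping caveat: this route yields the constant $\tilde{k}\,\tilde{M}^{-\tilde{p}/\tilde{r}}$, which equals or dominates the advertised $\tilde{k}/\tilde{M}$ only when $\tilde{M}\geq 1$ and $\tilde{p}\leq\tilde{r}$ (trivially when $\tilde{M}=1$, as in Prop.~13 where the lemma is actually used); ``collecting constants'' glosses over this.

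The genuine gap is the differentiability claim across the coordinate hyperplanes, and your proposed repair does not work. Differentiability of $V=\tilde{V}\circ z$ at a point with $x_i=0$, $x\neq 0$ is a property of $\tilde{V}$ and the substitution alone; it cannot be ``recovered via the continuity assumption on $\tilde{f}$,'' since no hypothesis on $f$ enters the definition of $V$. Indeed the hypotheses as transcribed do not suffice: take $n=2$, $r=2$, $\tilde{V}(z)=(z_1^2+z_2^2)^2+z_1^2z_2^2$, which is differentiable, positive off the origin, and bounded by $2\norm{z}_2^{4}=2\norm{z}_2^{2r}$; then $V(x)=x_1^2+x_2^2+3|x_1x_2|$ is not differentiable on the axes, regardless of which continuous $f$ one pairs it with (and one can pair it with an $f$ satisfying the decrease condition, e.g. by prescribing $\tilde{f}=-\nabla\tilde{V}$ and back-substituting). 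So either the original Thm.~6 of~\cite{kisole2025FiniteTimeSOS} carries additional structural hypotheses on $\tilde{V}$ (e.g. the exponent pattern in each $z_i$, or vanishing of $\partial_{z_i}\tilde{V}$ on $\{z_i=0\}$ fast enough to cancel the factor $|x_i|^{(1-r)/r}$) that the statement here suppresses, or the conclusion should be read off the hyperplanes / almost everywhere. If you restrict the claim to the set where all $x_i\neq 0$, or add such a hypothesis on $\tilde{V}$ and verify the cancellation explicitly, your proof is complete; as written, the hyperplane step is asserted with an argument that cannot close it.
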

\vspace*{2mm}

Using this result, we obtain the following sufficient conditions for finite-time rate performance.

\begin{prop}\label{prop:finite_time_stability_LF_sengi}
	For $\Omega\subseteq\R^{n}$, $f:\Omega\to\R^{n}$, and $r>0$, let $\tilde{\Omega}:=\{\sign(x)\cdot|x|^{\frac{1}{r}}\mid x\in \Omega\}$ and define $\tilde{f}_{r}:\tilde{\Omega}\to\R^{n}$ by $\tilde{f}_{r}(z):=\frac{1}{r}f(\sign(z)\cdot|z|^{r})\cdot|z|^{1-r}$. For given $\eta\in(0,1)$, $k\geq 0$ and $M\geq 1$, if there exists $\tilde{V}:\tilde{\Omega}\to\R_{+}$ such that
	\begin{align*}
		M^{-\frac{2}{\eta}}\norm{z}_{2}^{2r}\leq \tilde{V}(z) &\leq \norm{z}_{2}^{2r},	\\
		\nabla\tilde{V}(z)^T \tilde{f}_{r}(z)&\leq -\frac{2k}{\eta}\norm{z}_{2}^{(2-\eta) r},\qquad \forall z\in\tilde{\Omega},
	\end{align*}
	then, for any $c\in\R_{+}$ such that $S_{c}(V)\subseteq\tnf{int}(\Omega)$ where $V(x):=\tilde{V}(\sign(x)\cdot|x|^{\frac{1}{r}})$, $f$ is finite-time stable on $S_{c}(V)$ with rate $k$ and gain $M$.
\end{prop}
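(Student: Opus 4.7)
The plan is to invoke Lem.~\ref{lem:finite_time_stability_sengi} to translate the proposition's hypotheses into a differential inequality on $V(x):=\tilde V(\sign(x)\cdot |x|^{1/r})$, and then rescale via $W:=V^{\eta/2}$ to reduce to the standard finite-time Lyapunov conditions of Cor.~\ref{cor:LF_sufficiency_rate}. Concretely, I would apply Lem.~\ref{lem:finite_time_stability_sengi} with the parameter identifications $\tilde M=1$, $\tilde k=2k/\eta$, $\tilde p=(2-\eta)r$, and $\tilde r=2r$, so that $\tilde p/\tilde r = 1-\eta/2$; the hypothesis $\tilde V(z)\leq \|z\|_2^{2r}$ and the inequality on $\nabla\tilde V^T\tilde f_r$ then yield, on $\Omega\setminus\{0\}$, the pointwise bound
\[
\nabla V(x)^T f(x)\leq -(2k/\eta)\,V(x)^{1-\eta/2}.
\]

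Next, I would set $p:=2/r$ and exploit the elementwise identity $\|z\|_2^{2r}=\bigl(\sum_i|x_i|^{2/r}\bigr)^r=\|x\|_p^{2}$ (with $z=\sign(x)\cdot|x|^{1/r}$) to push the hypothesized envelope on $\tilde V$ through to $M^{-2/\eta}\|x\|_p^{2}\leq V(x)\leq \|x\|_p^{2}$ for every $x\in\Omega$ (continuity at the origin forces $V(0)=0$). Since $\nabla V(x)^T f(x)\leq 0$ on $S_c(V)\subseteq\mathrm{int}(\Omega)$, compactness and the argument of Prop.~\ref{prop:exponential_stability_LF} give forward invariance of $S_c(V)$. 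I would then introduce $W(x):=V(x)^{\eta/2}$, which is continuous on $\Omega$ and differentiable on $\Omega\setminus\{0\}$; the envelope on $V$ transfers to
\[
M^{-1}\|x\|_p^{\eta}\leq W(x)\leq \|x\|_p^{\eta},
\]
and the chain rule combined with the bound on $\nabla V^T f$ gives
\[
\nabla W(x)^T f(x)=\tfrac{\eta}{2}V(x)^{\eta/2-1}\nabla V(x)^T f(x)\leq -k
\]
for every $x\in S_c(V)\setminus\{0\}$, hence almost everywhere in $S_c(V)$.

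Finally, I would invoke Cor.~\ref{cor:LF_sufficiency_rate} on $G=S_c(V)$ with $\alpha(x)=\|x\|_p^{\eta}$, $\beta=\beta_{\tnf{f}}$ and $\rho=\rho_{\tnf{f}}$: wherever $W(x)>0$ one has $k\rho_{\tnf{f}}(W(x))=-k$, so $\dot W(x)\leq k\rho_{\tnf{f}}(W(x))$ a.e., certifying $\beta_{\tnf{f}}$-stability of $f$ on $S_c(V)$ with rate $k$ and gain $M$. By Lem.~\ref{lem:finite_time_stability} this is exactly finite-time stability with the same rate and gain, as required. The main obstacle is bookkeeping: matching the several similar-looking exponents in Lem.~\ref{lem:finite_time_stability_sengi} to the finite-time definition and tracking them cleanly through the substitution $W=V^{\eta/2}$; a secondary technicality is that $V$ (and hence $W$) need not be differentiable at $x=0$, but since $\{0\}$ has Lebesgue measure zero and $\phi_f(0,t)=0$, the Lyapunov inequality is only needed almost everywhere, which is precisely what Cor.~\ref{cor:LF_sufficiency_rate} requires.
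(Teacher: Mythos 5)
Your proof is correct and follows essentially the same path as the paper's: invoke Lem.~\ref{lem:finite_time_stability_sengi} with $\tilde M=1$, $\tilde k=2k/\eta$, $\tilde p=(2-\eta)r$, $\tilde r=2r$, convert the envelope on $\tilde V$ into one on $V$ via $\|z\|_2^{2r}=\|x\|_{2/r}^2$, pass to $V^{\eta/2}$ to obtain the bound $\dot{(\cdot)}\leq -k$, and close via forward invariance of $S_c(V)$. The only place you deviate is the final citation: you go directly through Cor.~\ref{cor:LF_sufficiency_rate} together with Lem.~\ref{lem:finite_time_stability}, whereas the paper cites Cor.~\ref{cor:finite_time_stability_LF}. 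Your route is arguably cleaner here, since Cor.~\ref{cor:finite_time_stability_LF} carries a blanket hypothesis that the settling-time function $T_f$ is continuous (needed only for its necessity direction) which the paper's proof does not explicitly verify; by appealing to the pure-sufficiency Cor.~\ref{cor:LF_sufficiency_rate} you sidestep that hypothesis.

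Two small bookkeeping remarks. First, when invoking Cor.~\ref{cor:LF_sufficiency_rate} one should note that the displayed derivative condition there is in terms of $\dot W(x):=D_t^+W(\phi_f(x,t))|_{t=0}$, and you should verify this agrees with $\nabla W(x)^Tf(x)$ wherever $W$ is differentiable (true on $S_c(V)\setminus\{0\}$), with $x=0$ having measure zero so the ``a.e.'' qualifier covers it. Second, $\rho_{\tnf{f}}(y)=-1$ for $y>0$ and $\rho_{\tnf{f}}(0)=0$; your argument correctly restricts to $W(x)>0$, i.e.\ $x\neq 0$, so the required inequality $\dot W(x)\leq k\rho_{\tnf{f}}(W(x))$ is indeed met almost everywhere.
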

\begin{proof}
	Suppose that $\tilde{V}$ satisfies the proposed conditions, and let the associated $V$ be as defined. Then, by Lemma~\ref{lem:finite_time_stability_sengi} (letting $\tilde{p}=(2-\eta)r$, $\tilde{k}=\frac{2k}{\eta}$, $\tilde{r}=2r$, $\tilde{M}=1$), this $V$ satisfies $\nabla V(x)^Tf(x)\leq -\frac{2k}{\eta} V(x)^{1-\frac{\eta}{2}}$ for all $x\in \Omega\setminus\{0\}$.
	Defining $z(x):=\sign(x)\cdot|x|^{\frac{1}{r}}$ for $x\in \Omega$, it follows that
	\begin{equation*}
		V(x)=\tilde{V}(z(x))\leq\norm{z(x)}_{2}^{2r}
		=\bl\||x|^{1/r}\br\|_{2}^{2r}
		=\norm{x}_{2/r}^2,
	\end{equation*}
	and, similarly, $V(x)\geq M^{-\frac{2}{\eta}}\norm{x}_{2/r}^2$.
	Now, let $\hat{V}(x):=V(x)^{\frac{\eta}{2}}$. Then, $M^{-1}\norm{x}_{2/r}^{\eta}\leq \hat{V}(x)\leq \norm{x}_{2/r}^{\eta}$ for all $x\in\Omega$. Furthermore, for all $x\in\Omega\setminus\{0\}$, we have
	\begin{align*}
		\nabla\hat{V}(x)^T f(x)
		&=\frac{\eta}{2}V(x)^{\frac{\eta}{2}-1}\nabla V(x)^T f(x)	\\
		&\quad\leq -kV(x)^{\frac{\eta}{2}-1}V(x)^{1-\frac{\eta}{2}}
		=-k.
	\end{align*}
	Finally, for any $c\in \R_{+}$ such that $S_{c}(V)\subseteq\tnf{int}(\Omega)$, we have that $\nabla V(x)^T f(x)\leq 0$ for all $x\in S_{c}(V)$, and thus $S_{c}(V)$ is forward invariant for $f$.
	By Cor.~\ref{cor:finite_time_stability_LF}, it follows that $f$ is finite-time stable on $S_{c}(V)$ 
	with rate $k$ and gain $M$.
\end{proof}

Although the Lyapunov conditions in Prop.~\ref{prop:finite_time_stability_LF_sengi} do not always admit an obvious SOS formulation, they can often be posed as an SOS program through multiplication by a suitable factor $h(z)$, as in the following corollary.

\begin{cor}\label{cor:finite_time_stability_SOS}
	For given $\eta\in(0,1)$ and $f:\R^{n}\to\R^{n}$, suppose there exist $r\in\N$ and $\lambda\in\R^{n}$ such that, letting $\tilde{f}_{r}(z):=\frac{1}{r}f(\sign(z)\cdot|z|^{r})\cdot|z|^{1-r}$ and $h(z):=\prod_{i=1}^{n}|z_{i}|^{\lambda_{i}}$, we have $h(z)\tilde{f}_{r}(z)\in\R^{n}_{d_{f}}[z]$ and $h(z)\norm{z}_{2}^{(2-\eta)r}\in\R_{d_{h}}[z]$. Furthermore, let $\Omega:=\{x\in\R^{n}\mid g_{i}(x)\geq 0\}$ be such that $\tilde{\Omega}:=\{z\in\R^{n}\mid \tilde{g}_{i}(z)\geq 0\}$ is semi-algebraic with $\tilde{g}_{i}(z):=g_{i}(\sign(z)\cdot|z|^{r})$. For any $d\in\N$, if $\tilde{V}$ solves
	\begin{align}\label{eq:finite_time_stability_SOSP}
		\max_{k\geq0,~\gamma>0,~\tilde{V}}&	&	&k, &	\\
		\tnf{s.t.}&		&	& \gamma \tilde{V}(z)-(z^T z)^{r}   &\in\Sigma_{s,2d}[\tilde{\Omega}],	\notag\\				
		&		&		& \hspace*{0cm}(z^T z)^{r} -\tilde{V}(z)  &\in\Sigma_{s,2d}[\tilde{\Omega}],	\notag\\[-0.4em]
		&		&		& \hspace*{-2cm}h(z)\bbl[-\frac{2k}{\eta} (z^Tz)^{(2-\eta)r/2}-\nabla \tilde{V}(z)^T \tilde{f}_{r}(z)\bbr] &\in\Sigma_{s,d'}[\tilde{\Omega}],	\notag
	\end{align}
	where $d':=\max\{d_{h},2d-1+d_{f}\}$, then, letting $V(x)=\tilde{V}(\sign(x)\cdot|x|^{\frac{1}{r}})$, for any $c\in\R_{+}$ with $S_{c}(V)\subseteq\tnf{int}(\Omega)$, $f$ is finite-time stable on $S_{c}(V)$ with rate $k$ and gain $M=\gamma^{\frac{\eta}{2}}$.
%
\end{cor}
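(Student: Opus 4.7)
The plan is to show that feasibility of the SOS program~\eqref{eq:finite_time_stability_SOSP} forces the hypotheses of Proposition~\ref{prop:finite_time_stability_LF_sengi} to hold with gain $M=\gamma^{\eta/2}$, after which the stated conclusion follows immediately from that proposition. First I would fix a feasible triple $(k,\gamma,\tilde V)$ and translate each of the three SOS constraints into a pointwise polynomial inequality on the semialgebraic set $\tilde\Omega=\{z\in\R^n\mid \tilde g_i(z)\ge 0\}$, using that $p\in \Sigma_{s,d}[\tilde\Omega]$ implies $p(z)\ge 0$ for all $z\in\tilde\Omega$ via its Putinar representation.

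The first two constraints unpack directly to $\gamma^{-1}\|z\|_2^{2r}\le \tilde V(z)\le \|z\|_2^{2r}$ for all $z\in\tilde\Omega$. Setting $M:=\gamma^{\eta/2}$ yields $M^{-2/\eta}=\gamma^{-1}$, so these bounds coincide with the upper and lower bounds on $\tilde V$ required in Proposition~\ref{prop:finite_time_stability_LF_sengi}.

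The third constraint requires more care, since $\tilde f_r$ need not be continuous, or even defined, on the coordinate hyperplanes when $r>1$. Here I would exploit that $h(z)=\prod_i|z_i|^{\lambda_i}\ge 0$ on $\R^n$, with strict inequality on the open dense subset $U:=\{z\mid z_i\ne 0\text{ for all }i\text{ with }\lambda_i>0\}$, and that by hypothesis both $h(z)\tilde f_r(z)$ and $h(z)\|z\|_2^{(2-\eta)r}$ are polynomials, so the bracketed expression multiplied by $h$ is a well-defined polynomial that the SOS constraint forces to be non-negative on $\tilde\Omega$. Dividing by $h(z)>0$ on $\tilde\Omega\cap U$ yields the pointwise bound $\nabla\tilde V(z)^T\tilde f_r(z)\le -\tfrac{2k}{\eta}\|z\|_2^{(2-\eta)r}$ wherever $\tilde f_r$ is classically defined. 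Because the right-hand side is continuous and the left-hand side is continuous wherever $\tilde f_r$ is (i.e.\ precisely on $U$), the inequality extends to the whole of $\tilde\Omega\cap U$ by density, which is the set on which Proposition~\ref{prop:finite_time_stability_LF_sengi} ultimately needs the condition in order to apply Lemma~\ref{lem:finite_time_stability_sengi} and conclude the derivative bound on $V$ over $\Omega\setminus\{0\}$.

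With the three hypotheses of Proposition~\ref{prop:finite_time_stability_LF_sengi} verified, that proposition delivers finite-time stability of $f$ on $S_c(V)$ with rate $k$ and gain $M=\gamma^{\eta/2}$, for every $c\in\R_+$ with $S_c(V)\subseteq\text{int}(\Omega)$. The main obstacle, and the step that justifies the somewhat unusual form of the third SOS constraint, is precisely the handling of the singular set $\{h=0\}$: the role of the factor $h$ is to clear the non-polynomial singularity of $\tilde f_r$ so that the constraint lives in the polynomial ring, and the cost of this cleverness is that the resulting pointwise inequality is only directly recovered on $\{h>0\}$. Showing that this restricted inequality is still enough to feed into Proposition~\ref{prop:finite_time_stability_LF_sengi} is the non-routine portion of the argument; all remaining manipulations (identifying $M=\gamma^{\eta/2}$, passing to $V(x)=\tilde V(\sign(x)\cdot|x|^{1/r})$, and invoking forward invariance of $S_c(V)$ as in the proof of Proposition~\ref{prop:finite_time_stability_LF_sengi}) are mechanical.
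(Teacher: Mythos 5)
Your proposal is correct and follows the same route as the paper's proof: unpack the three SOS constraints into pointwise inequalities on $\tilde\Omega$, match them to the hypotheses of Prop.~\ref{prop:finite_time_stability_LF_sengi} with $M=\gamma^{\eta/2}$ (your identification $M^{-2/\eta}=\gamma^{-1}$ is exactly right), and invoke that proposition. In fact you go further than the paper, whose proof of this corollary consists only of the assertion that the feasible $\tilde V$ ``satisfies all conditions of Prop.~\ref{prop:finite_time_stability_LF_sengi}'' and does not comment on the role of $h$.

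Two small remarks. First, the ``density'' step in your handling of the third constraint is vacuous as stated: once you divide the pointwise SOS inequality by $h(z)>0$, you already have the bound $\nabla\tilde V(z)^T\tilde f_r(z)\le -\tfrac{2k}{\eta}\|z\|_2^{(2-\eta)r}$ at every point of $\tilde\Omega\cap\{h>0\}$, which is exactly $\tilde\Omega\cap U$ — there is nothing to extend. The real question, which you correctly flag but only address heuristically, is why the bound holding on $\tilde\Omega\cap\{h>0\}$ rather than on all of $\tilde\Omega$ is enough for Prop.~\ref{prop:finite_time_stability_LF_sengi}: the complement $\{h=0\}$ is a union of hyperplanes of measure zero, and the underlying Lyapunov condition in Cor.~\ref{cor:finite_time_stability_LF} only needs to hold for a.e.\ $x$, so the restriction is benign. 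That remark would close the argument cleanly; as it stands you assert but do not prove this, which puts you on the same footing as the paper rather than behind it.
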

\begin{proof}
	Suppose the optimization program is feasible. Then the solution $\tilde{V}$ satisfies all conditions of Prop.~\ref{prop:finite_time_stability_LF_sengi} with $M=\gamma^{\frac{\eta}{2}}$.
	By that proposition, we conclude that $f$ is finite-time stable on $S_{c}(V)$ with rate $k$ and gain $M$.
\end{proof}


Cor.~\ref{cor:finite_time_stability_SOS} shows how finite-time stability of a class of rational vector fields
can be tested using SOS programming. In the following section, we will apply this SOS program, as well as those from Cor.~\ref{cor:exponential_stability_SOS} and Cor.~\ref{cor:rational_stability_SOS}, to verify exponential, rational, and finite-time stability of several examples, and compute lower bounds on the associated rate performance.

\section{Numerical Analysis of Local and Global Rate Performance using SOS}\label{sec:examples}

In Section~\ref{sec:SOS_tests}, we have proposed SOS implementations of the necessary and sufficient Lyapunov conditions for rate and gain performance in Thm.~\ref{thm:LF_necessity_diff_FT} as applied to exponential, rational, and finite-time performance metrics.
Now, we evaluate the efficacy and accuracy of these SOS Programs (SOSPs) for computing maximum lower bounds on both global and local rate performance, considering separately the cases of exponential, rational, and finite-time stability.

In each case, we first consider a globally $\beta$-stable vector field for the corresponding function $\beta$.
Using the associated SOSP from Section~\ref{sec:SOS_tests} and performing a bisection search on the parameter $k$, we find a largest lower bound on the global rate performance.
We verify through simulation that this bound is tight.
For the rational stability example, we compare the obtained lower bound on the rate performance to that established using the more standard Lyapunov conditions for rational stability (Conditions~(i) in Lemma~\ref{lem:rational_LFs_equivalence}), verifying the conjectured conservatism in this bound from Subsec.~\ref{subsec:SOS_tests:rational}.

Having addressed global stability, we next show that the proposed SOSPs may also be used for local stability analysis -- computing the rate performance of a vector field on a given semialgebraic domain. We apply each test to a reverse-time van der Pol system (modified for the finite-time stability case), verifying stability on a semi-algebraic set parameterized by a radius parameter, $R$. Computing a maximal lower bound on the rate performance for several values of $R$, we verify that this bound decreases to zero as $R$ increases. For exponential stability, we also show that the largest lower bound on the rate performance matches the rate of decay of the linearized system as $R$ approaches $0$ -- as expected.

Finally, we show that the proposed SOS tests may also be used to compute regions of $\beta$-stability rate performance -- i.e. forward invariant sets on which the vector field is $\beta$-stable with a given rate. Specifically, considering again the (modified) van der Pol equation, we plot the largest closed sublevel sets of the Lyapunov function certifying each stability notion, for several values of the rate $k$. We show that, as $k$ tends to zero -- and $\beta$-stability reduces to asymptotic stability -- the associated region of performance also approaches the region of asymptotic attraction of the system.

The SOSP for each example in this section is parsed using the Matlab toolbox SOSTOOLS~\cite{papachristodoulou2021SOSTOOLS}, and solved using the semidefinite programming solver Mosek~\cite{mosek}. All simulations are performed using the Matlab function \texttt{ode23}.



\subsection{Exponential Stability}\label{subsec:examples:exponential}

We start with exponential stability metrics. As discussed in Subsec.~\ref{subsec:SOS_tests:exponential}, we anticipate that the SOS implementation should have little, if any conservatism. To test this hypothesis, we first apply the SOSP from Cor.~\ref{cor:exponential_stability_SOS} to verify global stability of a modified Lorenz system, obtaining accurate bounds on the rate and gain performance. We then use the SOSP to test the local rate performance of a van der Pol equation, and compute regions of exponential performance.

\subsubsection{Global Exponential Stability of Lorenz System}

As a first example, consider a Lorenz system defined by
\begin{equation}\label{eq:Lorenz_ODE}
	\dot{x}(t)=f(x(t)):=\bmat{8(x_{2}(t)-x_{1}(t))\\ x_{1}(t)(0.5-x_{3}(t))-x_{2}(t)\\ x_{1}(t)x_{2}(t) -4 x_{3}(t)},
\end{equation}
where we have selected parameters for which the system is globally exponentially stable.
Solving the SOSP in Cor.~\ref{cor:exponential_stability_SOS} with $\Omega = \R^n$ and $d \in \{1,\hdots,8\}$,  global exponential stability can be verified with greatest lower bound $k=0.4688$ on the rate, which does not improve when increasing $d$. Fixing this rate, the associated least upper bounds on gain performance, $M_{d}$, are presented in Table~\ref{tab:exponential_stability_Lorenz}.

\begin{table}[b]
	\setlength{\tabcolsep}{5.25pt}
	\vspace*{-0.4cm}
	\begin{tabular}{c|cccccccc}
		$d$     & 1      & 2      & 3      & 4      & 5      & 6      & 7      & 8\\\hline
		$M_{d}$ & 3.952 & 1.732 & 1.504 & 1.469 & 1.452 & 1.422 & 1.415 & 1.415
	\end{tabular}
	\caption{Gains $M_{d}$ for which global exponential stability of the Lorenz system in~\eqref{eq:Lorenz_ODE} can be verified with rate $k=0.4688$ using the SOSP~\eqref{eq:exponential_stability_SOSP}, for several values of $d$.}\label{tab:exponential_stability_Lorenz}
\end{table}

For comparison, the solution to the Lorenz system is also simulated from $t=0$ up to $t=10$, starting with initial state $x(0)=\tnf{e}_{2}:=(0,1,0)^T$. The norm of this solution is displayed with logarithmic scale in Fig.~\ref{fig:exponential_stability_Lorenz_bound}, along with the associated exponential bounds $M_{d}e^{-0.4688t}$ for several values $M_{d}$ as in Tab.~\ref{tab:exponential_stability_Lorenz}. From Fig.~\ref{fig:exponential_stability_Lorenz_bound} and Tab.~\ref{tab:exponential_stability_Lorenz}, it is clear that the bound on gain performance becomes tighter as the degree $d$ of the polynomial Lyapunov function is increased. Moreover, the obtained decay rate $k=0.4688$ also closely matches the average decay rate of the solution from $t=8$ to $t=10$, computed as $k_{\tnf{sim}}=\frac{\ln(\norm{\phi_{f}(\tnf{e}_{2},8)}_{2})-\ln(\norm{\phi_{f}(\tnf{e}_{2},10)}_{2})}{10-8}=0.4689$.


\begin{figure}[t]
	\centering
	\includegraphics[width=1.0\linewidth]{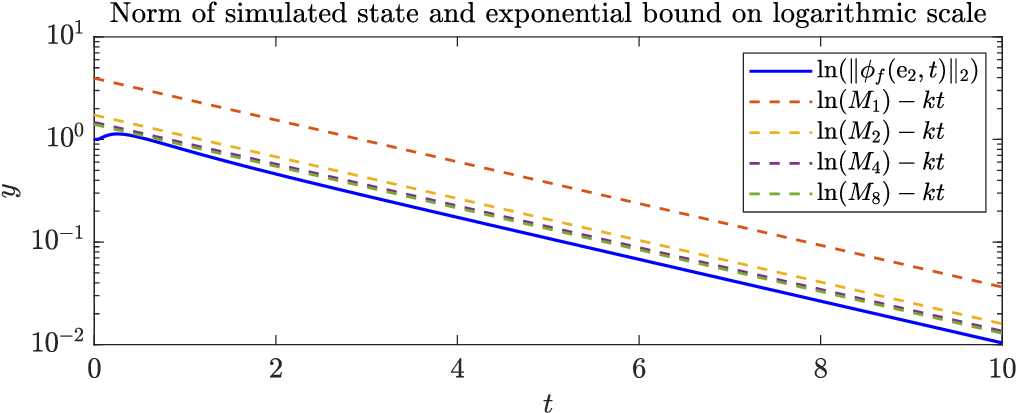}
	\vspace*{-0.6cm}
	\caption{Norm $\norm{\phi_{f}(x,t)}_{2}$ of simulated solution to the Lorenz system in~\eqref{eq:Lorenz_ODE} on logarithmic scale, starting with $x=\tnf{e}_{2}:=(0,1,0)^T$. The exponential bounds $M_{d}e^{-0.4688t}$ for $M_{d}$ as in Tab.~\ref{tab:exponential_stability_Lorenz} are also plotted for $d\in\{1,2,4,8\}$.}
	\label{fig:exponential_stability_Lorenz_bound}	
	\vspace*{-0.4cm}
\end{figure}

\subsubsection{Exponentially Stable Regions of van der Pol System}
As a second example, consider the reverse-time van der Pol system,
\begin{equation}\label{eq:vdP_ODE}
	\dot{x}(t)=f(x(t))=\bmat{-x_{2}(t)\\
		-(1-x_{1}(t)^2)x_{2}(t)+x_{1}(t)}.
\end{equation}
We examine exponential stability of this vector field using the conditions of SOSP~\eqref{eq:exponential_stability_SOSP} with $\Omega_{R}:=\{x\in\R^{2}\mid R^2-x^T x\geq0\}$
 and $d=8$. For $R$ ranging from $0.01$ to $1.75$ (with spacing $\Delta R=0.015$), we use bisection to find the greatest lower bound on rate $k_R$ while leaving $M$ unconstrained. For each such $k_R$, we then compute the least upper bound on gain performance, $M_R$, for which the given rate is feasible. The resulting values of $k_R$ and $M_R$ are displayed in Fig.~\ref{fig:exponential_stability_vdP_rate} as a function of the radius, $R$. Fig.~\ref{fig:exponential_stability_vdP_rate} shows that, for $R$ close to 0, exponential decay can be verified with rate $k$ close to 0.5, matching the decay rate of the linearized system. As $R$ approaches the inner radius of the van der Pol limit cycle, the decay rate decreases to 0.

\begin{figure}[b]
	\centering
	\vspace*{-0.3cm}
	\includegraphics[width=1.0\linewidth]{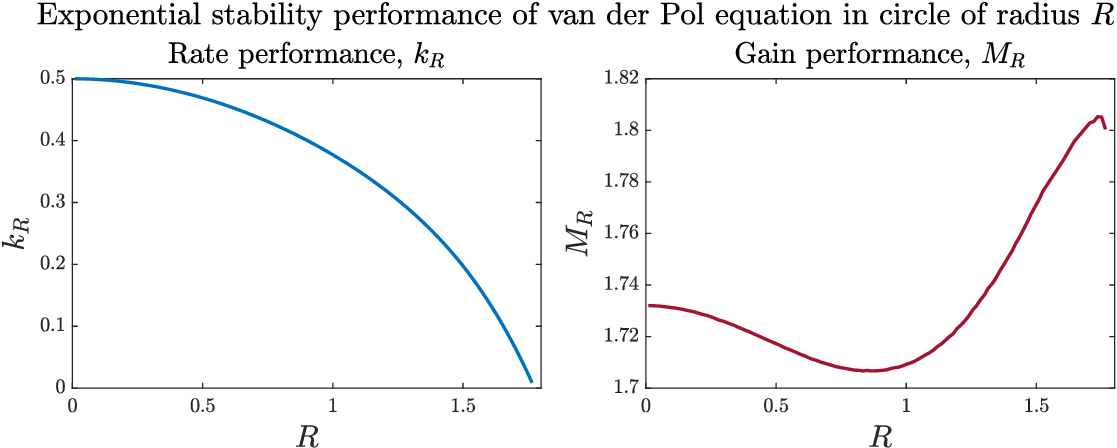}
	\vspace*{-0.6cm}
	\caption{Greatest lower bound on rate $k_R$ and associated least upper bound on gain $M_R$ for exponential stability of the van der Pol system in~\eqref{eq:vdP_ODE} as a function of radius, $R$, of the centered circular domain on which the conditions were verified as computed by SOSP~\eqref{eq:exponential_stability_SOSP} with $d=8$.}
	\label{fig:exponential_stability_vdP_rate}	
\end{figure}

For $k\in\{0,0.025,0.1,0.2,0.35,0.48\}$, the greatest closed level sets of the computed Lyapunov function certifying stability with rate $k$ are displayed in Fig.~\ref{fig:exponential_stability_vdP_ROA}, corresponding to forward-invariant sets of the van der Pol equation with rate performance at least $k$. The level set of a Lyapunov function for the value of $k=0$ is displayed as well, along with a simulation-based estimate of the region of attraction of the van der Pol equation (black dashed line). Fig.~\ref{fig:exponential_stability_vdP_ROA} shows that the region of performance grows as the rate decreases --- approaching the region of asymptotic attraction as $k$ approaches $0$.

\subsection{Rational Stability}\label{subsec:examples:rational}

Having demonstrated that the SOSP for exponential-type performance in Cor.~\ref{cor:exponential_stability_SOS} has no apparent conservatism, we now consider the case of rational-type rate and gain metrics. In the rational case, we will use the sufficient SOS conditions for rational rate and gain performance obtained from Lemma~\ref{lem:rational_LFs_equivalence}~(ii) as implemented in Cor.~\ref{cor:rational_stability_SOS}. As discussed in Subsection~\ref{subsec:SOS_tests:rational}, we anticipate that these conditions for rate performance may be conservative with a scaling factor of up to $1/M$ where $M$ is the gain performance. To evaluate these scalings, we will compare the obtained bounds on rate performance to both numerical simulation and to bounds obtained from the ``naive'' SOS test obtained from Lemma~\ref{lem:rational_LFs_equivalence}~(i).
The Lyapunov Conditions (i) are tightened to SOS constraints as was done in Cor.~\ref{cor:rational_stability_SOS}, obtaining a constraint on the derivative of the variable $V$ as
\begin{equation}\label{eq:rational_SOS_classical}
	-\frac{r}{p}M^{\frac{r}{p}} k(x^Tx)^{(r+p)/2}-\nabla V(x)^T f \in \Sigma_{d'}[\Omega],
\end{equation}
where we will use the same values of $M,p,r,d,d'$ as used to evaluate rate performance with Lyapunov Conditions~(ii). For both Lyapunov conditions, bisection is used to find the largest value of $k$ for which the respective SOSP is feasible.

As in the case of exponential stability, we first find a greatest lower bound on the rate performance of a globally rationally stable system, and subsequently test the local rate performance and compute regions of performance for a van der Pol system.

\begin{figure}[t]
	\centering
	\includegraphics[width=1.0\linewidth]{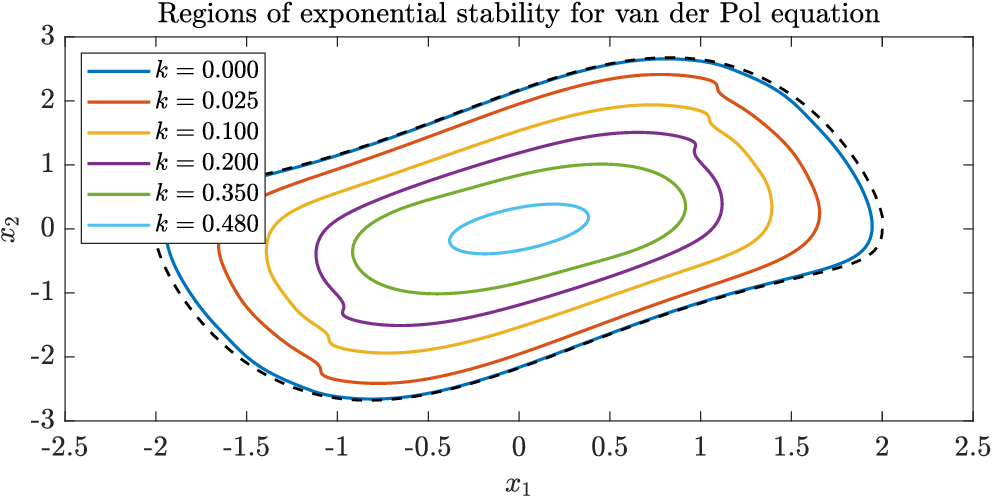}
	\vspace*{-0.6cm}
	\caption{Forward-invariant domains of the reverse-time van der Pol Equation~\eqref{eq:vdP_ODE} in which exponential stability can be verified with several rates $k$ by solving the SOSP~\eqref{eq:exponential_stability_SOSP}. The simulated region of asymptotic attraction is displayed using a black dashed line.}
	\label{fig:exponential_stability_vdP_ROA}	
	\vspace*{-0.4cm}
\end{figure}

\subsubsection{A Globally Rationally Stable Vector Field}

Consider the following ODE\\[-0.75\baselineskip]
\begin{equation}\label{eq:rational_stability_example_ODE}
	\dot{x}(t)=f(x(t)):=\bmat{-x_{1}(t)^3-x_{2}(t)^3 \\ x_{1}(t)^3 -x_{2}(t)^3 -3x_{3}(t) \\ 2x_{2}(t)-x_{3}(t)^3}.
\end{equation}
Similar to the case of exponential stability, we use bisection to find the largest $k$ such that SOSP~\eqref{eq:rational_stability_SOSP} is feasible on $\Omega=\R^{n}$, using $p=2$ and $r=2d$ for $d\in\{1,\hdots,9\}$.
This approach
establishes global rational stability with greatest lower bounds on rate performance $k_{\tnf{(ii)}}$ as presented in Table~\ref{tab:rational_stability_3D}. For each value of $d$, fixing the associated rate performance $k=k_{\tnf{(ii)}}$, we obtain a least upper bound of $M=1.5$ on the gain performance, which does not change for different values of $d$.

\begin{table}[b]
	\setlength{\tabcolsep}{4pt}
		\vspace*{-0.4cm}
	\begin{tabular}{c|ccccccccc}
		$d$     & $1$ & $2$ & $3$ & $4$ & $5$ & $6$ & $7$ & $8$ & 9   \\\hline
		$k_{\tnf{(i)}}$ & 0.418 & 0.306 & 0.261 & 0.195 & 0.145 & 0.107 & 0.076 & 0.053 & 0.038
		\\
		$k_{\tnf{(ii)}}$ & 0.563 & 0.616 & 0.622 & 0.622 & 0.625 & 0.625 & 0.625 & 0.626 & 0.626
	\end{tabular}
	\caption{Greatest lower bounds on rational rate performance of ODE~\eqref{eq:rational_stability_example_ODE} for $\Omega:=\R^{3}$, established with Lyapunov Conditions~(\tnf{i}) ($k_{\tnf{(i)}}$) and (\tnf{ii}) ($k_{\tnf{(ii)}}$) of Lemma~\ref{lem:rational_LFs_equivalence}. In both cases, the associated SOSP is solved with $r=2d$, $p=2$, and $M=1.5$. The simulation-based estimate of the rate performance is $k_{\tnf{sim}}=1.048$.}\label{tab:rational_stability_3D}
\end{table}

Now, to test the conservatism scaling rules conjectured in Subsection~\ref{subsec:SOS_tests:rational}, we fix $M=1.5$ and apply Lyapunov Conditions~(i) for rational stability from Lemma~\ref{lem:rational_LFs_equivalence}, obtaining greatest lower bounds on rate performance $k_{\tnf{(i)}}$ as in Table~\ref{tab:rational_stability_3D}.
Notably, the lower bound on the rate computed with Conditions~(i) decreases with the degree $d$. We conjecture that this decrease in performance may be due to the global nature of the test and associated restrictions on the degree structure of the polynomials used in the test (e.g. $V$ must be a homogeneous polynomial). However, comparing the greatest lower bound $k_{\tnf{(i)}}=0.418$ (for $d=1$) to the greatest lower bound $k_{\tnf{(ii)}}=0.626$ (for $d=9$), we find that the conservatism introduced by Conditions~(i) compared to Conditions~(ii) indeed manifests as a scaling of roughly $\frac{1}{M}=\frac{2}{3}$ of the bound on the rate performance.

To evaluate conservatism of the rate bound $k_{\tnf{(ii)}}=0.626$, we now compare this rate to that obtained from numerical simulation. Specifically, we simulate the ODE for 2500 distinct initial conditions along a sphere of radius $R=10^{4}$ centered at the origin. Using bisection to find the largest value of $k$ for which all simulated solutions satisfy $\norm{x(t)}_{2}^{2}\leq M\beta_{\tnf{r}}(\norm{x(0)}_{2}^{2},kt)$ for $M=1.5$ and $t\in[0,1000]$, we obtain an estimate of the rate performance as $k_{\tnf{sim}}=1.048$. Compared to this estimate, the lower bound $k_{\tnf{(ii)}}=0.626$ appears to be slightly more conservative than the conjectured scaling factor of $\frac{1}{M}=\frac{2}{3}$.

\subsubsection{Rationally Stable Regions of van der Pol System}

Next, we examine local bounds on rate performance for the same reverse-time van der Pol Equation~\eqref{eq:vdP_ODE} used for exponential stability. To initialize this test, we solve the SOSP in Cor.~\ref{cor:rational_stability_SOS} using $p=2$, $r=1$, $d=9$, $M=3$, and $\Omega:=\{x\mid x_{1}^2-x_{1}x_{2}+x_{2}^2\leq R^2\}$ with $R\in\{0.3,0.5,...,1.7\}$. Associated greatest values $k$ for which SOSP~\eqref{eq:rational_stability_SOSP} is feasible are given in Table~\ref{tab:rational_stability_vdP}. In addition, fixing $k=0$, feasibility of SOSP~\eqref{eq:rational_stability_SOSP} is also verified for $R=2.365$. For each $k$ and $R$, the associated largest closed level set, $G_{k}$, of the obtained Lyapunov function is computed -- certifying rate performance of at least $k$ on $G_{k}$. A subset of these regions is displayed in Fig.~\ref{fig:rational_stability_vdP_ROA}, along with a simulation-based approximation of the region of asymptotic attraction (black dashes). From Fig.~\ref{fig:rational_stability_vdP_ROA} it is clear that, as expected, the region of rational stability with rate performance $k$ approaches the region of asymptotic attraction as $k\to 0$.

Next, having established that each of the $G_k$ are forward invariant, we may obtain improved estimates of rate performance on these regions by repeating the set of initial tests, but using $\Omega=G_{k}$. For each such $k$ and $G_k$, the resulting maximum lower bound on the rate performance, $k_{\tnf{(ii)}}$, is presented in Table~\ref{tab:rational_stability_vdP}, showing uniform improvement in computed maximum lower bound.
In addition, to test the conjectured conservatism scaling rule, greatest lower bounds on the rate performance on each region are also computed using Lyapunov Conditions~(i) from Lemma~\ref{lem:rational_LFs_equivalence}. The obtained largest lower bounds on the rate performance, $k_{\tnf{(i)}}$, are included in Table~\ref{tab:rational_stability_vdP}.

\begin{table}[b]
	\setlength{\tabcolsep}{5pt}
	%
	%
	\vspace*{-0.1cm}
	\begin{tabular}{c|cccccccc}
		$R$     & $0.3$ & $0.5$      & $0.7$      & $0.9$      & $1.1$      & $1.3$      & $1.5$ & $1.7$    \\\hline
		$k$ & 16.886 & 5.970 & 2.895 & 1.562 & 0.870 & 0.510 & 0.335 & 0.198 \\\hline
		$k_{\tnf{(i)}}$ & 10.312 & 3.675 & 1.793 & 1.025 & 0.646 & 0.515 & 0.321 & 0.167 \\
		$k_{\tnf{(ii)}}$ & 18.429 & 6.561 & 3.195 & 1.806 & 1.133 & 0.904 &	0.558 & 0.290 \\
		$k_{\tnf{sim}}$ & 36.674 & 13.060 & 6.327 & 3.509 & 2.184 & 1.743 & 1.054 & 0.535
	\end{tabular}
	\caption{Greatest values of $k$ for which SOSP~\eqref{eq:rational_stability_SOSP} is feasible for ODE~\eqref{eq:vdP_ODE} on $\Omega:=\{x\mid x_{1}^2-x_{1}x_{2}+x_{2}^2\leq R^2\}$, using $r=1$, $d=9$, $p=2$, and $M=3$. Greatest lower bounds on the rational stability rate performance on the associated invariant set $G_{k}$ in Fig.~\ref{fig:rational_stability_vdP_ROA} are provided as well, established with Lyapunov Conditions (\tnf{i}) ($k_{\tnf{(i)}}$) and (\tnf{ii}) ($k_{\tnf{(ii)}}$) of Lemma~\ref{lem:rational_LFs_equivalence}.
		Simulation-based estimates ($k_{\tnf{sim}}$) of the rate performance on $G_{k}$ are also given.}\label{tab:rational_stability_vdP}
\end{table}

Finally, to verify accuracy of the established lower bounds on the rate performance, for each of the computed regions $G_{k}$, we numerically simulate the ODE in Eqn.~\eqref{eq:vdP_ODE} for 200 distinct initial conditions placed at intervals along the boundary of this region. For each region, we then find the largest rate $k_{\tnf{sim}}$ such that all of the simulated solutions satisfy  $\norm{x(t)}_{2}^{2}\leq M\norm{x(0)}_{2}^{2}/(1+k_{\tnf{sim}}\norm{x(0)}_{2}^2 t)$ for $t\in[0,100]$. The resulting values of $k_{\tnf{sim}}$ are given in Table~\ref{tab:rational_stability_vdP}, providing a numerical estimate of the true rate performance on the region $G_{k}$, for each $k\in\{0.3,0.5\hdots,1.7\}$.

The results suggest that the conservatism in $k_{\tnf{(ii)}}$ scales as roughly $\frac{1}{2}$ compared with $k_{\tnf{sim}}$ -- slightly less than the conjectured scaling of $\frac{1}{M}=\frac{1}{3}$. In addition, the conservatism in $k_{\tnf{(i)}}$ compared to $k_{\tnf{(ii)}}$ likewise scales roughly as $\frac{1}{2}$, again slightly less than the conjectured scaling factor of  $\frac{1}{3}$.

\begin{figure}[t]
	\centering
	\includegraphics[width=1.0\linewidth]{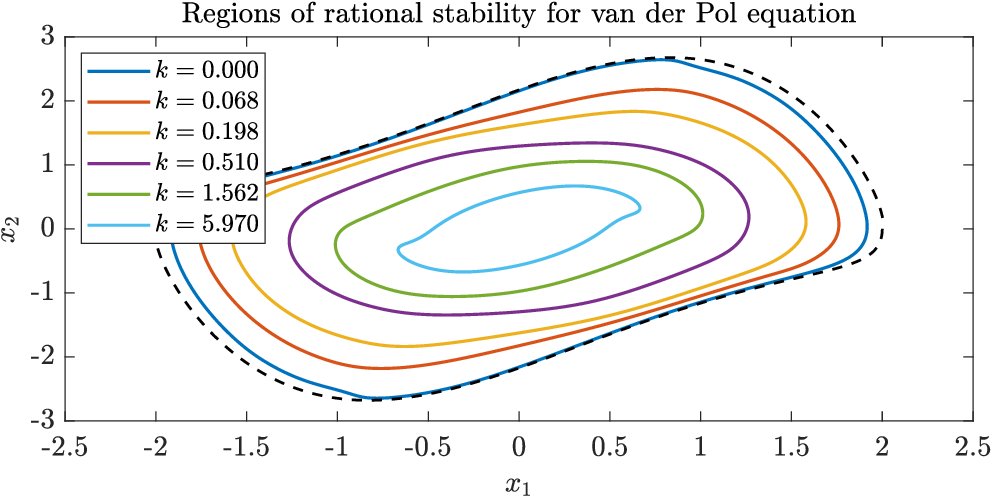}
	\vspace*{-0.6cm}
	\caption{Forward-invariant regions, $G_{k}$, of the reverse-time van der Pol Equation~\eqref{eq:vdP_ODE}, in which rational stability can be verified with $M=3$ and rate $k$ by solving SOSP~\eqref{eq:rational_stability_SOSP}, for several values of $k$ from Table~\ref{tab:rational_stability_vdP}.
		The simulated region of (asymptotic) attraction is displayed in black dashes.}
	\label{fig:rational_stability_vdP_ROA}	
	\vspace*{-0.4cm}
\end{figure}

\subsection{Finite-Time Stability}\label{subsec:examples:finite_time}

Finally, we consider the notion of finite-time stability, applying the SOSP from Cor.~\ref{cor:finite_time_stability_SOS}. We first verify that this SOSP may indeed be used to compute tight lower bounds on the rate performance (and thus settling time) of a scalar-valued finite-time stable system, and then apply the SOSP to test the local rate performance of a modified van der Pol system, plotting associated regions of finite-time attraction.

\subsubsection{Global Finite-Time Stability}

First, we illustrate accuracy of SOSP~\eqref{eq:finite_time_stability_SOSP} for finite-time stability by considering a class of scalar ODEs of the form
\begin{equation*}
	\dot{x}(t)=f(x(t)):=-\frac{1}{\eta} \sign(x(t))|x(t)|^{1-\eta},
\end{equation*}
for $\eta\in(0,1)$.
Solutions to this ODE are given by $x(t)=x(0)\sqrt[\eta]{1-|x(0)|^{-\eta}t}$, so that for any $\eta\in(0,1)$, the vector field is globally finite-time stable with rate $k=1$, and gain $M=1$, corresponding to a settling time function of $T(x)=|x|^\eta$. Introducing the variable substitution as in Cor.~\ref{cor:finite_time_stability_SOS} with $r=\frac{1}{\eta}$, we get the modified vector field
\begin{equation*}
	\tilde{f}_{r}(z)=\frac{1}{r}f(\sign(z)|z|^{r})|z|^{1-r}=-\sign(z).
\end{equation*}
Solving SOSP~\eqref{eq:finite_time_stability_SOSP} for this system using $d=r$, $h(z)=|z|$ and $\Omega=\R$, global finite-time stability can be verified tightly with rate $k=1$ and gain $M=1$ for each $\eta\in\{\frac{1}{2},\frac{1}{3},\frac{1}{4},\hdots,\frac{1}{10}\}$. Specifically, for each value of $\eta$, we find 
$V(x) = |x|^{\eta}$.

\subsubsection{Finite-Time Stable Regions of Modified van der Pol System}

As a second illustration, consider the ODE defined by the following vector field, corresponding to a van der Pol oscillator with variable substitution $x\rightarrow \sign(x)|x|^{1/3}$:
\begin{align}\label{eq:vdP_FT_ODE}
	\dot{x}
	\!:=\!\bmat{-\sign(x_{2})|x_{2}|^{\frac{1}{3}}\\
	                   2(|x_{1}|^{\frac{2}{3}}-1)\sign(x_{2})|x_{2}|^{\frac{1}{3}}+\sign(x_{1})|x_{1}|^{\frac{1}{3}}\!}\!.
\end{align}
We verify finite-time stability of this vector field using SOSP~\eqref{eq:finite_time_stability_SOSP} with $\Omega:=\{x\mid \norm{x}_{2}\leq R\}$, $\eta=\frac{2}{3}$, $r=3$, and $h(z)=z_{1}^2 z_{2}^2$. In this case, $\tilde{f}_{r}$ in SOSP~\eqref{eq:finite_time_stability_SOSP} is found to be
\begin{align*}
	\tilde{f}_{r}(z)q(z)
	&=\frac{1}{3}\bmat{-z_{2}^3\\-2(1-z_{1}^{2})z_{1}^2 z_{2}+z_{1}^3},
\end{align*}
with $\tilde{\Omega}=\{z\mid R^2-z_{1}^{6}-z_{2}^{6}\geq 0\}$.
Solving SOSP~\eqref{eq:finite_time_stability_SOSP} with $d=12$ and $R=1$, a greatest lower bound on the finite-time rate performance is obtained as $k=0.2976$, with associated upper bound on the gain performance as $M=1.9994$. This implies a settling time function of $T(x)=k^{-1}\norm{x}_{2/r}^{\eta}=3.36\norm{x}_{2/3}^{2/3}$ on any forward-invariant set contained in the unit circle.

\begin{figure}[b]
	\centering
	\vspace*{-0.4cm}
	\includegraphics[width=1.0\linewidth]{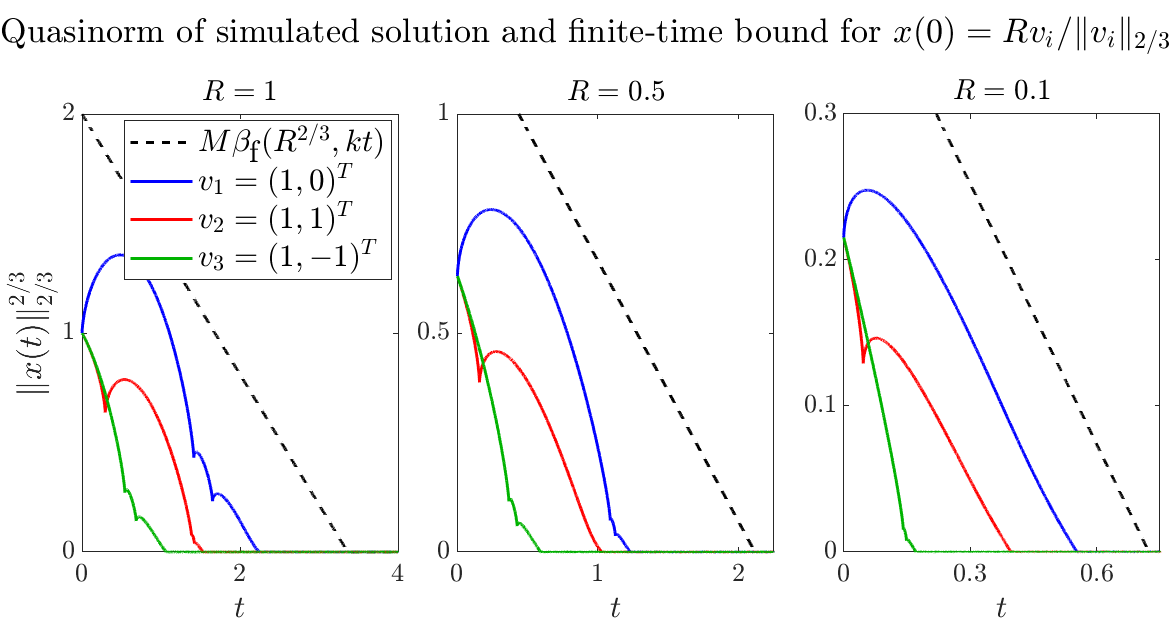}
	\vspace*{-0.6cm}
	\caption{Value of $\alpha(x(t))=\norm{x(t)}_{2/3}^{2/3}$ for numerical solutions, $x(t)$, to the ODE in Eqn.~\eqref{eq:vdP_FT_ODE}, for several initial states $x(0)$ with $\|x(0)\|_{2/3}=R$ for $R\in\{1,0.5,0.1\}$, along with the finite-time stability bound $M\beta_{\tnf{f}}(\alpha(x(0)),kt)=1.999(R^{2/3}-0.2976t)$.}
	\label{fig:finite_time_stability_vdP_bound}	
\end{figure}

To verify accuracy of the result, we simulate ODE~\eqref{eq:vdP_FT_ODE} for 1000 randomly generated initial conditions in the unit circle, and compute the settling time, $T_{\tnf{sim}}(x(0))$, for each initial condition, $x(0)$. For each simulation result, we then establish an estimate of the rate performance as $k_{\tnf{sim}}(x(0)):=\norm{x(0)}_{2/3}^{2/3}/T_{\tnf{sim}}(x(0))$. Taking the minimum of the values $k_{\tnf{sim}}(x(0))$ over all initial conditions, we obtain an estimate of the rate performance as $k_{\tnf{sim}}=0.3107$, which is slightly greater than the lower bound $k=0.2976$ computed with SOSP~\eqref{eq:finite_time_stability_SOSP}.
For visualization of rate, Fig.~\ref{fig:finite_time_stability_vdP_bound} depicts $\norm{x(t)}_{2/3}^{2/3}$ for several numerical solutions, $x(t)$, for initial conditions $x(0)=R\frac{v_{i}}{\norm{v_{i}}_{2/3}}$ for $v_{i}\in\{\smallbmat{1\\0},\smallbmat{1\\1},\smallbmat{1\\-1}\}$, along with the finite-time stability bound $M\beta_{\tnf{f}}(R^{\eta},kt)=M(R^{2/3}-kt)$. Fig.~\ref{fig:finite_time_stability_vdP_bound} shows that the obtained bound on the rate performance accurately approximates the slope of $\norm{x(t)}_{2/3}^{2/3}$ as solutions $x(t)$ approach the origin.

Fixing $M=2$, SOSP~\eqref{eq:finite_time_stability_SOSP} is also solved on $\Omega:=\{x\mid \norm{x}_{2}\leq R\}$ for $R\in\{1,2,2.5,2.75,2.85,2.9,2.95,2.96\}$, obtaining greatest lower bounds on the rate performance $k$ as in Table~\ref{tab:finite_time_stability_vdP}.
Fixing $k=0$ and performing bisection on the value of $R$, finite-time stability can be verified up to $R=2.962$.
For each value of $k$, the largest closed level set, $G_{k}$, of the resulting Lyapunov function is computed, several of which are displayed in Fig.~\ref{fig:finite_time_stability_vdP_ROA}.
This figure shows that the region of finite-time rate performance $k$ approaches the simulation-based estimate of the region of attraction (black dashes) as $k$ tends to $0$.

\begin{figure}[t]
	\centering
	\hspace*{-0.4cm}\includegraphics[width=1.1\linewidth]{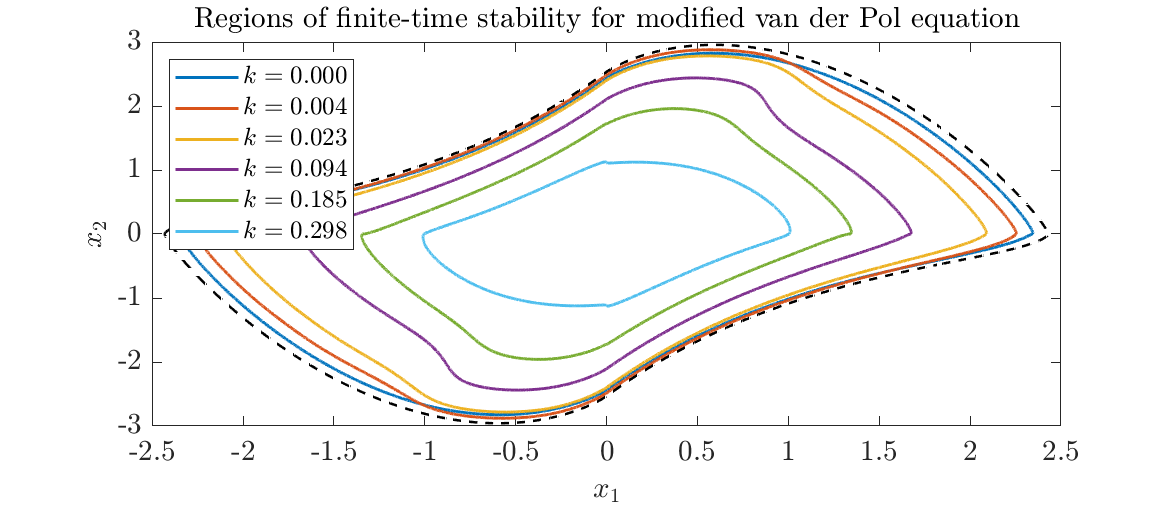}%
	\vspace*{-0.2cm}
	\caption{Forward-invariant sets, $G_{k}$, on which finite-time stability of the vector field in~\eqref{eq:vdP_FT_ODE} can be verified with different rates $k$ (and settling times $T(x)=\frac{1}{k}\norm{x}_{2/3}^{2/3}$) by solving SOSP~\eqref{eq:finite_time_stability_SOSP}. The simulated region of attraction is displayed as a black dashed line.}\label{fig:finite_time_stability_vdP_ROA}	
	\vspace*{-0.4cm}
\end{figure}

For each of the values of $k$ in Table~\ref{tab:finite_time_stability_vdP} and associated region of performance $G_{k}$, a tighter lower bound on the rate performance on this region, $k_{G_{k}}$, is computed by solving SOSP~\eqref{eq:finite_time_stability_SOSP} with $\Omega=G_{k}$, again setting $d=12$ and $M=2$. In addition, an estimate of the rate performance on each region $G_{k}$ is computed using simulation. Specifically, for each value of $k$, solutions to the ODE~\eqref{eq:vdP_FT_ODE} are simulated for 500 distinct initial conditions along the boundary of the associated region $G_{k}$. A largest value of $k_{\tnf{sim}}$ is then computed (through bisection) such that all of the simulated solutions on $G_{k}$ satisfy $\norm{x(t)}_{2/3}^{2/3}\leq M(\norm{x(0)}_{2/3}^{2/3}-k_{\tnf{sim}}t)$ for $t\in[0,100]$. The obtained value of $k_{\tnf{sim}}$ for each value of $k$ is given in Table~\ref{tab:finite_time_stability_vdP}. The results show that, for smaller values of $R$, the lower bounds on the rate performance, $k_{G_{k}}$, are relatively accurate compared to the simulated estimate of the rate performance, $k_{\tnf{sim}}$. However, this accuracy decreases as the considered region $G_{k}$ grows.

%

\begin{table}[t]
	\setlength{\tabcolsep}{3.5pt}
	\begin{tabular}{c|cccccccc}
		$R$     & 1 & 2& 2.5 & 2.75 & 2.85 & 2.9 & 2.95 & 2.96 \\\hline
		$k$ & 0.2976 & 0.1849 & 0.0945 & 0.0414 & 0.0225 & 0.0130 & 0.0041 & 0.0026 \\\hline
		$k_{G_{k}}$ & 0.2976 & 0.2612 & 0.1805 & 0.1654 & 0.1451 & 0.0397 &	0.0157 & 0.0219 \\
		$k_\tnf{sim}$ & 0.3107 & 0.3107 & 0.3107 & 0.2879 & 0.2431 & 0.2144 & 0.1793 & 0.1690		
	\end{tabular}
	\caption{Largest values of $k$ for which SOSP~\eqref{eq:finite_time_stability_SOSP} is feasible for the ODE in Eqn.~\eqref{eq:vdP_FT_ODE}, using $M=2$, $r=3$,  $\eta=\frac{2}{3}$ $d=12$, and $\Omega:=\{x\mid \norm{x}_{2}\leq R\}$. Greatest lower bounds on the rate performance, $k_{G_{k}}$, on the associated invariant regions $G_{k}$ as in Fig.~\ref{fig:finite_time_stability_vdP_ROA} are provided as well, along with simulated estimates of the rate performance on these regions, $k_{\tnf{sim}}$.}\label{tab:finite_time_stability_vdP}
	\vspace*{-0.6cm}
\end{table}

\section{Conclusion}

This paper provides a framework for characterization of rate and gain performance in nonlinear systems. Specifically, such rates are formally defined in terms of normalized, time-invariant $\beta$ functions. This framework then allows rate and gain performance to be quantified for a broad class of nonlinear systems --- encompassing previous notions of exponential, rational, and finite-time stability. Rate and gain performance using the $\beta$ function approach is further shown to admit an equivalent Lyapunov characterization using a combination of a converse comparison lemma and a novel converse Lyapunov construction. When applied to exponential, rational, and finite-time rate parameters ($k$), these equivalent Lyapunov characterizations are $\dot V\le -k V$, $\dot V \le -k V^2$ and $\dot V\le -k$, respectively. The results are formulated both locally and globally. Sum-of-squares implementations of these Lyapunov tests for exponential, rational, and finite-time rates are also given. In the case of rational stability, it is shown that the new conditions offer improved quantification of rate performance, with a conjectured scaling rule for conservatism given as $\frac{1}{M}$ where $M$ is the gain performance. The SOS conditions are verified using a battery of numerical tests for both local and global stability. The accuracy of the quantified greatest lower bounds on rate are verified by comparison to numerical simulation, the conjectured conservatism scaling rule in the rational case is confirmed, and regions of rate performance are constructed.

\bibliographystyle{IEEEtran}
\bibliography{bibfile}

\begin{thebibliography}{10}
\providecommand{\url}[1]{#1}
\csname url@samestyle\endcsname
\providecommand{\newblock}{\relax}
\providecommand{\bibinfo}[2]{#2}
\providecommand{\BIBentrySTDinterwordspacing}{\spaceskip=0pt\relax}
\providecommand{\BIBentryALTinterwordstretchfactor}{4}
\providecommand{\BIBentryALTinterwordspacing}{\spaceskip=\fontdimen2\font plus
\BIBentryALTinterwordstretchfactor\fontdimen3\font minus
  \fontdimen4\font\relax}
\providecommand{\BIBforeignlanguage}[2]{{%
\expandafter\ifx\csname l@#1\endcsname\relax
\typeout{** WARNING: IEEEtran.bst: No hyphenation pattern has been}%
\typeout{** loaded for the language `#1'. Using the pattern for}%
\typeout{** the default language instead.}%
\else
\language=\csname l@#1\endcsname
\fi
#2}}
\providecommand{\BIBdecl}{\relax}
\BIBdecl

\bibitem{clarke1998smoothaConverseAsymptotic_LF}
F.~H. Clarke, Y.~S. Ledyaev, and R.~J. Stern, ``Asymptotic stability and smooth
  {Lyapunov} functions,'' \emph{Journal of differential Equations}, vol. 149,
  no.~1, pp. 69--114, 1998.

\bibitem{teel2014converseLF}
A.~R. Teel, J.~P. Hespanha, and A.~Subbaraman, ``A converse {Lyapunov} theorem
  and robustness for asymptotic stability in probability,'' \emph{IEEE
  Transactions on Automatic Control}, vol.~59, no.~9, pp. 2426--2441, 2014.

\bibitem{wei2022converseLF}
Y.~Wei and Y.~Chen, ``Converse {Lyapunov} theorem for nabla asymptotic
  stability without conservativeness,'' \emph{IEEE Transactions on Systems,
  Man, and Cybernetics: Systems}, vol.~52, no.~4, pp. 2676--2687, 2022.

\bibitem{kellet2015converseLFs_survey}
C.~M. Kellett, ``Classical converse theorems in {Lyapunov's} second method,''
  \emph{Discrete and Continuous Dynamical Systems - B}, vol.~20, no.~8, pp.
  2333--2360.

\bibitem{teel2000smoothConverseKL_LF}
A.~R. Teel and L.~Praly, ``A smooth {Lyapunov} function from a
  class-{$\mathcal{KL}$} estimate involving two positive semidefinite
  functions,'' \emph{ESAIM: Control, Optimisation and Calculus of Variations},
  vol.~5, pp. 313--367, 2000.

\bibitem{kellett2023TwoMeasureISS}
C.~M. Kellett and F.~R. Wirth, ``Variants of two-measure input-to-state
  stability,'' \emph{IFAC-PapersOnLine}, vol.~56, no.~1, pp. 7--12, 2023.

\bibitem{dashkovskiy2011ISS}
S.~Dashkovskiy, D.~V. Efimov, and E.~D. Sontag, ``Input to state stability and
  allied system properties,'' \emph{Automation and Remote Control}, vol.~72,
  pp. 1579--1614, 2011.

\bibitem{efimov2024ISS}
D.~Efimov and A.~Polyakov, ``On converse {Lyapunov} theorem for fixed-time
  input-to-state stability,'' \emph{SIAM Journal on Control and Optimization},
  vol.~62, no.~1, pp. 118--134, 2024.

\bibitem{sontag1999IOSnotions}
E.~D. Sontag and Y.~Wang, ``Notions of input to output stability,''
  \emph{Systems \& Control Letters}, vol.~38, no. 4-5, pp. 235--248, 1999.

\bibitem{sontag2000IOS}
E.~Sontag and Y.~Wang, ``Lyapunov characterizations of input to output
  stability,'' \emph{SIAM Journal on Control and Optimization}, vol.~39, no.~1,
  pp. 226--249, 2000.

\bibitem{krichman2001IOSS}
M.~Krichman, E.~D. Sontag, and Y.~Wang, ``Input-output-to-state stability,''
  \emph{SIAM Journal on Control and Optimization}, vol.~39, no.~6, pp.
  1874--1928, 2001.

\bibitem{boyd1994linear}
S.~Boyd, L.~El~Ghaoui, E.~Feron, and V.~Balakrishnan, \emph{Linear matrix
  inequalities in system and control theory}.\hskip 1em plus 0.5em minus
  0.4em\relax SIAM, 1994.

\bibitem{scherer2000LMIs}
C.~Scherer and S.~Weiland, ``Linear matrix inequalities in control,''
  \emph{Lecture notes, Dutch Institute for Systems and Control, Delft, the
  Netherlands}, vol.~3, no.~2, pp. 62--74, 2000.

\bibitem{bacciotti2005LF_book}
A.~Bacciotti and L.~Rosier, \emph{{Liapunov} functions and stability in control
  theory}.\hskip 1em plus 0.5em minus 0.4em\relax Springer Science \& Business
  Media, 2005.

\bibitem{peet2009exponentialStability}
M.~M. Peet, ``Exponentially stable nonlinear systems have polynomial {Lyapunov}
  functions on bounded regions,'' \emph{IEEE Transactions on Automatic
  Control}, vol.~54, no.~5, pp. 979--987, 2009.

\bibitem{peet2012converseSOS_LF}
M.~M. Peet and A.~Papachristodoulou, ``A converse sum of squares {Lyapunov}
  result with a degree bound,'' \emph{IEEE Transactions on Automatic Control},
  vol.~57, no.~9, pp. 2281--2293, 2012.

\bibitem{leth2017rational_stability}
T.~Leth, R.~Wisniewski, and C.~Sloth, ``On the existence of polynomial
  {Lyapunov} functions for rationally stable vector fields,'' in \emph{2017
  56th IEEE Conference on Decision and Control (CDC)}.\hskip 1em plus 0.5em
  minus 0.4em\relax IEEE, 2017, pp. 4884--4889.

\bibitem{parrilo2000SOS}
P.~A. Parrilo, \emph{Structured semidefinite programs and semialgebraic
  geometry methods in robustness and optimization}.\hskip 1em plus 0.5em minus
  0.4em\relax California Institute of Technology, 2000.

\bibitem{ahmadi2011converseSOS}
A.~A. Ahmadi and P.~A. Parrilo, ``Converse results on existence of sum of
  squares {Lyapunov} functions,'' in \emph{2011 50th IEEE conference on
  decision and control and European control conference}.\hskip 1em plus 0.5em
  minus 0.4em\relax IEEE, 2011, pp. 6516--6521.

\bibitem{grune2002KLD_functions}
L.~Gr{\"u}ne, ``Input-to-state dynamical stability and its {Lyapunov} function
  characterization,'' \emph{IEEE Transactions on Automatic Control}, vol.~47,
  no.~9, pp. 1499--1504, 2002.

\bibitem{grune2004KLD}
------, \emph{Asymptotic behavior of dynamical and control systems under
  pertubation and discretization}.\hskip 1em plus 0.5em minus 0.4em\relax
  Springer, 2004.

\bibitem{kellett2014KL}
C.~M. Kellett, ``A compendium of comparison function results,''
  \emph{Mathematics of Control, Signals, and Systems}, vol.~26, pp. 339--374,
  2014.

\bibitem{arnold1992ODEs}
V.~I. Arnol'd, \emph{Ordinary differential equations}.\hskip 1em plus 0.5em
  minus 0.4em\relax Springer Science \& Business Media, 1992.

\bibitem{bhat2000FiniteTimeStability}
S.~P. Bhat and D.~S. Bernstein, ``Finite-time stability of continuous
  autonomous systems,'' \emph{SIAM Journal on Control and optimization},
  vol.~38, no.~3, pp. 751--766, 2000.

\bibitem{grune1999asymptotic_exponential}
L.~Gr{\"u}ne, E.~D. Sontag, and F.~R. Wirth, ``Asymptotic stability equals
  exponential stability, and {ISS} equals finite energy gain—if you twist
  your eyes,'' \emph{Systems \& Control Letters}, vol.~38, no.~2, pp. 127--134,
  1999.

\bibitem{jongeneel2024asymptotic_exponential}
W.~Jongeneel, ``Asymptotic stability equals exponential stability--while you
  twist your eyes,'' \emph{arXiv eprint:2411.03277}, 2024.

\bibitem{sontag1998IISS}
E.~D. Sontag, ``Comments on integral variants of {ISS},'' \emph{Systems \&
  Control Letters}, vol.~34, no. 1-2, pp. 93--100, 1998.

\bibitem{hahn1963LFs}
W.~Hahn, H.~H. Hosenthien, and H.~Lehnigk, \emph{Theory and application of
  {L}iapunov's direct method}.\hskip 1em plus 0.5em minus 0.4em\relax
  prentice-hall Englewood Cliffs, 1963, vol.~3, no.~4.

\bibitem{khalil2002nonlinear}
H.~K. Khalil, \emph{Nonlinear Systems}.\hskip 1em plus 0.5em minus 0.4em\relax
  Prentice Hall, 2002.

\bibitem{jammazi2013rational}
C.~Jammazi and M.~Zaghdoudi, ``On the rational stability of autonomous
  dynamical systems. {A}pplications to control chained systems,'' \emph{Applied
  Mathematics and Computation}, vol. 219, no.~20, pp. 10\,158--10\,171, 2013.

\bibitem{roxin1966FTS}
E.~Roxin, ``On finite stability in control systems,'' \emph{Rendiconti del
  Circolo Matematico di Palermo}, vol.~15, no.~3, pp. 273--282, 1966.

\bibitem{kisole2025FiniteTimeSOS}
S.~Kisole, K.~Garg, and M.~Peet, ``Bounding the settling time of finite-time
  stable systems using sum of squares,'' \emph{IFAC-PapersOnLine}, vol.~59,
  no.~19, pp. 67--72, 2025.

\bibitem{putinar1993psatz}
M.~Putinar, ``Positive polynomials on compact semi-algebraic sets,''
  \emph{Indiana University Mathematics Journal}, vol.~42, no.~3, pp. 969--984,
  1993.

\bibitem{papachristodoulou2021SOSTOOLS}
A.~Papachristodoulou, J.~Anderson, G.~Valmorbida, S.~Prajna, P.~Seiler,
  P.~Parrilo, M.~M. Peet, and D.~Jagt, ``{SOSTOOLS} version 4.00 sum of squares
  optimization toolbox for {MATLAB},'' 2021.

\bibitem{mosek}
\BIBentryALTinterwordspacing
M.~ApS, \emph{The MOSEK optimization toolbox for MATLAB manual. Version
  10.0.34}, 2023. [Online]. Available:
  \url{http://docs.mosek.com/latest/toolbox/index.html}
\BIBentrySTDinterwordspacing

\end{thebibliography}
%
%

\vspace*{-1.25cm}
\begin{IEEEbiography}[{\vspace*{-0.7cm}\includegraphics[width=1in,height=1in,clip,keepaspectratio]{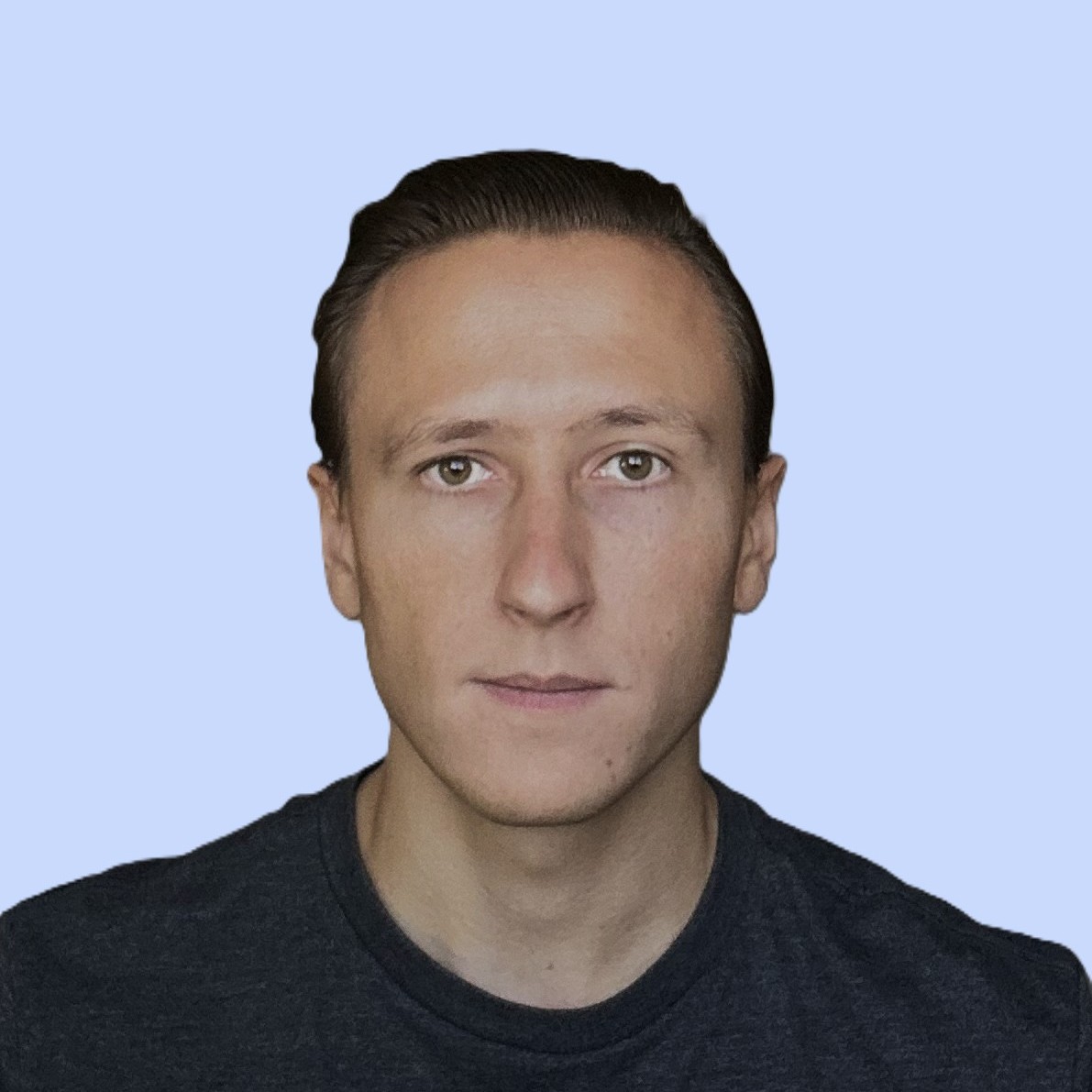}}]{Declan S. Jagt} received his B.S from Amsterdam University College (2017), and M.S in applied mathematics from Delft University of Technology (2020). Since 2020, he has been part of the Cybernetic Systems and Controls Lab at Arizona State University, working on convex optimization based methods for analysis and control of nonlinear and infinite-dimensional systems.
\end{IEEEbiography}

\vspace*{-2.2cm}
\begin{IEEEbiography}[{\vspace*{-0.5cm}\includegraphics[width=1in,height=1in,clip,keepaspectratio]{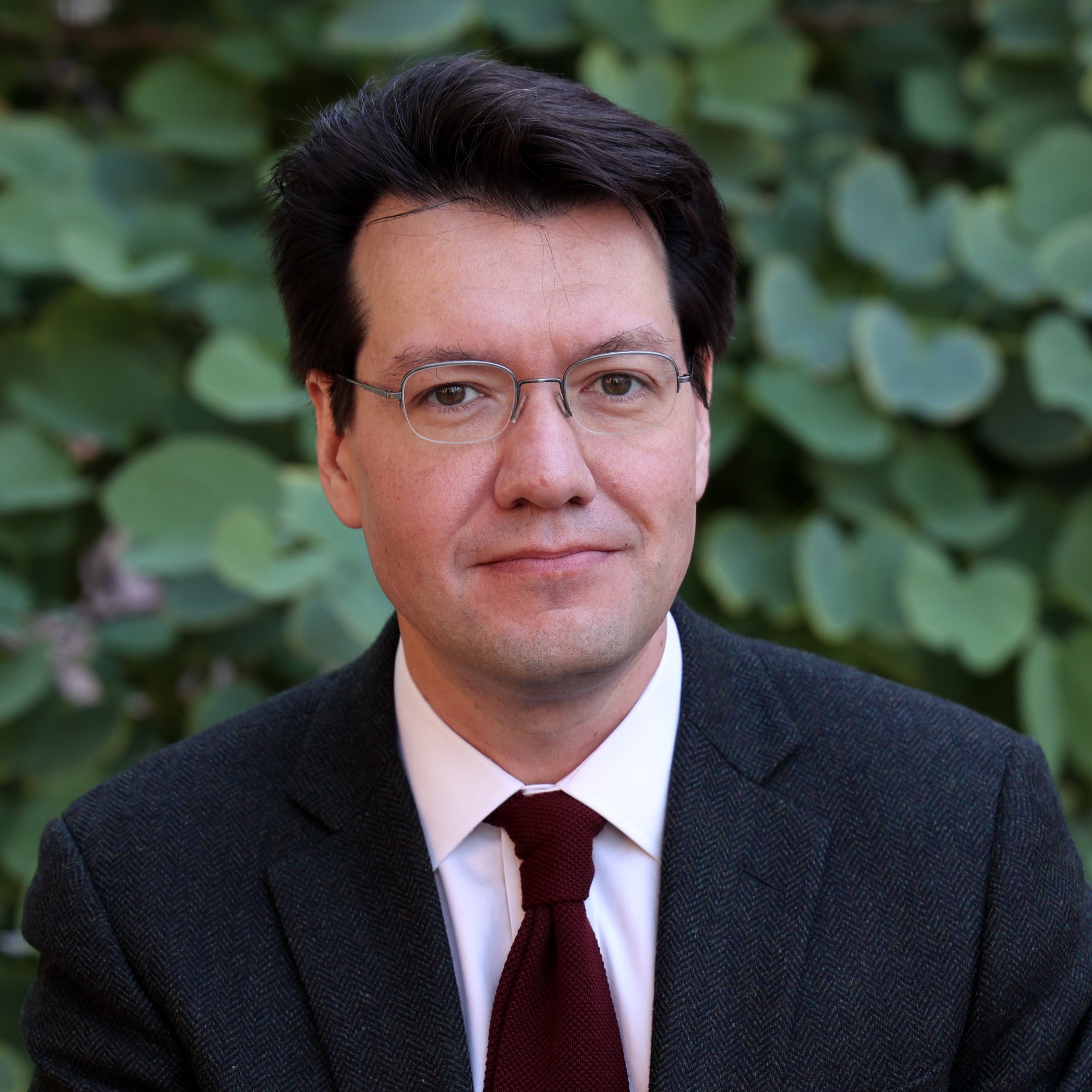}}]{Matthew M. Peet} received M.S. and Ph.D. degrees in aeronautics and astronautics from Stanford University (2000-2006). He was a postdoc at INRIA (2006-2008) and Asst. Professor at the Illinois Institute of Technology (2008-2012). Currently, he is Associate Professor of Aerospace Engineering at Arizona State University.\vspace*{-1.0cm}
\end{IEEEbiography}

\clearpage

\begin{appendices}
	
\section{Proof of Converse Lyapunov Theorems from Subsection~\ref{subsec:LF_characterization:necessity}}\label{appx:proofs}

In this appendix, we prove Thm.~\ref{thm:LF_necessity_diff_FT} and Thm.~\ref{thm:LF_necessity_diff}, showing that under suitable regularity conditions, $\beta$-stability of a vector field $f$ implies existence of a locally Lipschitz continuous Lyapunov function certifying $\beta$-stability with rate $(1-\epsilon)$, for all $\epsilon\in(0,1)$.

For these proofs, we first show in Lemma~\ref{lem:beta_negative_appx} that if $\beta$ is monotonically decreasing in $t$, then $\beta$ admits a natural extension to negative times, and which preserves the semigroup property, $\beta(\beta(y,t),-t)=y$. This extension will be used in Lemma~\ref{lem:V_sup_appx} to represent the proposed converse Lyapunov function as a supremum -- a representation which is then used in the proof of Thm.~\ref{thm:LF_necessity_diff_FT} and Thm.~\ref{thm:LF_necessity_diff}.

\begin{lem}\label{lem:beta_negative_appx}
	For $\rho:\R_{+}\to\R$ continuous on $(0,\infty)$, let $\beta:\R_{+}\times\R_{+}\to\R_{+}$ be the unique function satisfying $\partial_{t}\beta(y,t)=\rho(\beta(y,t))$ and $\beta(y,0)=y$. Suppose further that $\beta(y,t)$ is monotonically decreasing in $t$ whenever $\beta(y,t)>0$. 	
	Then, for any $z>0$, there exists a continuous function $\hat{\beta}_{z}:[0,z)\times\R\to[0,z]$ with the following properties
	\begin{enumerate}
		\item[(i)] $\hat{\beta}_{z}(\cdot,t)$ is monotonically nondecreasing for all $t\in\R$,
		
		\item[(ii)] $\hat{\beta}_{z}(\cdot,t)=\beta(\cdot,t)$ for all $t\geq 0$,
		
		\item[(iii)] $\hat{\beta}_{z}(\hat{\beta}_{z}(y,t),-s)=\hat{\beta}_{z}(y,t-s)$ for all $(y,t)  \in\{ (y,t)\in (0,z) \times \R_{+}\mid  0<\hat{\beta}_{z}(y,t)< z\}$ and $s\leq t$,
		
		\item[(iv)] $\hat{\beta}_{z_{1}}(y,t)=\hat{\beta}_{z_{2}}(y,t)$ for all $0<z_{1}\leq z_{2}$, and $(y,t)  \in\{ (y,t)\in (0,z_1) \times \R\mid  0<\hat{\beta}_{z_{1}}(y,t)< z_1\}$.
	\end{enumerate}
\end{lem}
\vspace*{2mm}
\begin{proof}	
	We first show that, for any $z>0$, there exists $T_{\max}(z)\in(0,\infty]$ such that $\beta(z,\cdot):[0,T_{\max}(z))\to(0,z]$ is surjective. To start, since $\rho$ is continuous, for any $z>0$ and $\epsilon\in(0,z)$, we can define $\kappa:=\min_{y\in[\epsilon,z]}|\rho(y)|>0$, so that $|\partial_{t}\beta(z,t)|=|\rho(\beta(z,t))|>\kappa$ for all $t$ for which $\beta(z,t)>\epsilon$. Since $\beta(z,t)$ is monotonically decreasing in $t$ whenever $\beta(z,y)>0$, it follows that $\partial_{t}\beta(z,t)<-\kappa$ whenever $\beta(z,t)>\epsilon$, and thus there exists a finite time $T_{\epsilon}\leq \frac{z-\epsilon}{\kappa}$ such that $\beta(z,t)<\epsilon$ for all $t\geq T_{\epsilon}$. Since this holds for arbitrary $\epsilon\in(0,z)$, this implies $\lim_{t\to\infty}\beta(z,t)=0$. As such, there exists some $T_{\max}(z):=\sup\{t\geq 0\mid \beta(z,t)>0\}$ (possibly infinite) such that $\beta(z,t)>0$ for all $t<T_{\max}(z)$ and $\lim_{t\to T_{\max}(z)}\beta(z,t)=0$. By continuity of $\beta$, it follows that $\beta(z,\cdot):[0,T_{\max}(z))\to(0,z]$ is surjective.

Having established that $\beta(z,\cdot)$ is monotonically decreasing and surjective for any $z> 0$, we use these properties to establish a backward-in-time extension of $\beta$. Specifically, for any $z > 0$, we have the existence of a monotonically decreasing inverse, $\mu(z,\cdot):=\beta(z,\cdot)^{-1}:(0,z]\to[0,T_{\max}(z))$, so that $\mu(z,y)$ indicates the time in which $\beta$ decreases from $z$ to $y$ -- i.e. $\beta(z,\mu(z,y))=y$ for all $y\in(0,z]$, and $\mu(z,\beta(z,t))=t$. Now, for $z\in (0,\infty)$, define $\hat{\beta}_{z}:[0,z)\times\R\to[0,z]$ as
	\begin{equation*}
		\hat{\beta}_{z}(y,t):=\begin{cases}
            0,                      & y=0,\; t\in \R,\\
			z,						& y>0,\, t\leq -\mu(z,y),		\\
			\beta(z,\mu(z,y)+t),	& y>0,\, t>-\mu(z,y).	
		\end{cases}
	\end{equation*}
Unlike $\beta$, for any $z \ge 0$, $\hat{\beta}_{z}$ is defined for all times, where $z$ bounds the value of $\hat \beta$ on negative times.
Since $\mu(z,\cdot)$ and $\beta(z,\cdot)$ are monotonically decreasing, it follows that for any $t\in\R$, $\beta(z,\mu(z,y)+t)$ (and hence $\hat{\beta}_{z}(y,t)$) is monotonically nondecreasing in $y$.
	In addition, for all $y\in[0,z)$ and $t\geq 0$,
	\begin{equation*}
		\hat{\beta}_{z}(y,t)
		=\beta(z,\mu(z,y)+t)
		=\beta(\beta(z,\mu(z,y)),t)
		=\beta(y,t).
	\end{equation*}
	Furthermore, for all $t,s\in\R_{+}$ for which $\hat{\beta}_{z}(y,t)\in(0,z)$ (i.e. $t\in(-\mu(z,y),T_{\max}(y))$) and $s\leq t$, we have
	\begin{align*}
		\hat{\beta}_{z}\bl(\hat{\beta}_{z}(y,t),-s\br)
		&=\beta\bbl(z,\mu\bl(z,\beta(z,\mu(z,y)+t)\br)-s\bbr)	\\
		&=\beta\bl(z,\mu(z,y)+t-s\br)	
		=\hat{\beta}_{z}\bl(y,t-s\br),
	\end{align*}
	where recall that $\mu(z,\beta(z,t))=t$ implies $\mu(z,\beta(z,\mu(z,y)+t))=\mu(z,y)+t$.
	Finally, for any $0<z_{1}\leq z_{2}$ and $y\in(0,z_{1})$, we remark that $\mu(z_{1},y)\leq\mu(z_{2},y)$ and
	\begin{align*}
		&\beta\bl(z_{2},\mu(z_{2},z_{1})+\mu(z_{1},y)\br)
		=\beta\bbl(\beta\bl(z_{2},\mu(z_{2},z_{1})\br),\mu(z_{1},y)\bbr)	\\
		&=\beta\bl(z_{1},\mu(z_{1},y)\br)
		=y
		=\beta\bl(z_{2},\mu(z_{2},y)\br),
	\end{align*}
	implying that $\mu(z_{2},y)=\mu(z_{2},z_{1})+\mu(z_{1},y)$. It follows that, for any $t\in\R$ such that $\hat{\beta}_{z_{1}}(y,t)\in(0,z_{1})$ (which implies $t\in(-\mu(z_{1},y),T_{\max}(y))$), we have
	\begin{align*}
		\hat{\beta}_{z_{2}}(y,t)
		&=\beta\bl(z_{2},\mu(z_{2},y)+t\br)	\\
		&=\beta\bl(z_{2},\mu(z_{2},z_{1})+\mu(z_{1},y)+t\br)	\\
		&=\beta\bbl(\beta\bl(z_{2},\mu(z_{2},z_{1})\br),\mu(z_{1},y)+t\bbr)	\\
		&=\beta\bl(z_{1},\mu(z_{1},y)+t\br)
		=\hat{\beta}_{z_{1}}(y,t).
	\end{align*}
	Thus, $\hat{\beta}_{z}(y,t)$ satisfies all the proposed properties.
\end{proof}

The $\hat{\beta}_{z}$ in Lemma~\ref{lem:beta_negative_appx} may be interpreted as allowing for extension of $\beta$ to negative times for which $\beta$ values are known to exist and are bounded by $z$. For negative times beyond this domain, the value of $\hat{\beta}_{z}$ is simply defined as $z$ and no longer coincides with $\beta$.
Using this restricted negative-time extension of $\beta$, the converse Lyapunov function in Thm.~\ref{thm:LF_necessity} can be equivalently expressed in the form in Eqn.~\ref{eq:V_sup}, as shown in the following lemma.

\begin{lem}\label{lem:V_sup_appx}
	For $\Omega \subseteq \R^n$ and $f:\Omega\to\R^{n}$ with $f(0)=0$, let $G\subseteq\Omega$ be forward invariant, and $\alpha_{1},\alpha_{2}:G\to\R_{+}$ be continuous and positive definite. For $\rho:\R_{+}\to\R$ continuous on $(0,\infty)$, let $\beta$ be the unique continuous function satisfying $\beta(y,0)=y$ and $\partial_{t}\beta(y,t)=\rho(\beta(y,t))$. Suppose further that $\beta(y,t)$ is monotonically decreasing in $t$ for all $y,t\in\R_{+}$ for which $\beta(y,t)>0$, and for any $z>0$, define the associated $\hat{\beta}_{z}\in[0,z)\times\R\to[0,z]$ as in Lemma~\ref{lem:beta_negative_appx}.
	
	If $f$ is $\beta$-stable on $G$ with respect to $\alpha_{1},\alpha_{2}$, then for any $\delta>0$, if $V:G\to\R_{+}$ is defined by $V(0)=0$ and
	\begin{equation*}
		V(x):=\!\sup_{t\in[0,T_{f}(x))}\!\hat{\beta}_{\alpha_{2}(x)+\delta}\bl(\alpha_{1}(\phi_{f}(x,t)),-t\br),\quad \forall x\in G\setminus\{0\},
	\end{equation*}
	where $T_{f}(x):=\sup\{t\geq 0\mid \alpha_{1}(\phi_{f}(x,t))>0\}$ (possibly infinite), we have that $V$ satisfies
	\begin{align*}
		\alpha_{1}(x)\leq V(x)&\leq \alpha_{2}(x),	&	&\forall x\in G,	\\
		V(\phi_{f}(x,t))&\leq \beta(V(x),t),		&	&\forall t\geq 0.
	\end{align*}	
\end{lem}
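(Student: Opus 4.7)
The plan is to verify each of the three conclusions directly from the definition of $V$, using only (i)~monotonicity of $\hat\beta$ in its first argument, (ii)~the identity $\hat\beta(y,0)=\beta(y,0)=y$ from normalization, (iii)~the semigroup of the solution map $\phi_f$, and (iv)~the semigroup identity for $\hat\beta$ from Lem.~\ref{lem:beta_negative_appx}. Throughout I would treat $x=0$ separately (where $V(0)=\alpha_1(0)=\alpha_2(0)=0$ by positive definiteness and $\phi_f(0,t)=0$ by $\beta$-stability), so the substantive work is for $x\in G\setminus\{0\}$, where positive definiteness of $\alpha_1$ together with continuity of $\phi_f$ guarantees $T_f(x)>0$ and the sup defining $V(x)$ is over a nondegenerate interval.

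For the lower bound $V(x)\ge \alpha_1(x)$ I would simply evaluate the supremum at $t=0$, giving $\hat\beta(\alpha_1(x),0)=\alpha_1(x)$. For the upper bound $V(x)\le \alpha_2(x)$ I would use $\beta$-stability to write $\alpha_1(\phi_f(x,t))\le \beta(\alpha_2(x),t)$, apply monotonicity of $\hat\beta(\cdot,-t)$ to get $\hat\beta(\alpha_1(\phi_f(x,t)),-t)\le \hat\beta(\beta(\alpha_2(x),t),-t)$, and then invoke the semigroup identity from Lem.~\ref{lem:beta_negative_appx} with $y=\alpha_2(x)$, $s=t$ to collapse $\hat\beta(\beta(\alpha_2(x),t),-t)=\hat\beta(\alpha_2(x),0)=\alpha_2(x)$. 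The positivity hypothesis of Lem.~\ref{lem:beta_negative_appx} is satisfied because $t\in[0,T_f(x))$ forces $\alpha_1(\phi_f(x,t))>0$ and hence $\beta(\alpha_2(x),t)>0$. Taking the supremum finishes this step.

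For the dynamics inequality $V(\phi_f(x,t))\le \beta(V(x),t)$, I would use the semigroup property of $\phi_f$ to rewrite
\[
V(\phi_f(x,t))=\sup_{s\in[0,T_f(x)-t)}\hat\beta\bl(\alpha_1(\phi_f(x,t+s)),-s\br),
\]
substitute $u=t+s$ so the sup runs over $u\in[t,T_f(x))$, and then split $\hat\beta(\alpha_1(\phi_f(x,u)),-(u-t))$ as $\hat\beta(\hat\beta(\alpha_1(\phi_f(x,u)),-u),t)$ via the $\hat\beta$-semigroup. Since the inner quantity $\hat\beta(\alpha_1(\phi_f(x,u)),-u)$ is bounded above by $V(x)$ by definition, and $\hat\beta(\cdot,t)=\beta(\cdot,t)$ is monotonically nondecreasing by Lem.~\ref{lem:radially_increasing}, the entire expression is $\le \beta(V(x),t)$. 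Taking the supremum over $u$ yields the claim.

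The main obstacle I anticipate is the ``backward then forward'' composition $\hat\beta(\hat\beta(y,-u),t)=\hat\beta(y,t-u)$ used in the dynamics step: Lem.~\ref{lem:beta_negative_appx} only states explicitly the ``forward then backward'' identity $\hat\beta(\hat\beta(y,t),-s)=\hat\beta(y,t-s)$ with $t\ge 0,\ s\le t$. To bridge this I would return to the explicit construction $\hat\beta_z(y,\tau)=\beta(z,\mu(z,y)+\tau)$ from the proof of Lem.~\ref{lem:beta_negative_appx}: on the range where $\hat\beta(y,-u)$ is finite --- which is guaranteed here since we have already shown $\hat\beta(\alpha_1(\phi_f(x,u)),-u)\le\alpha_2(\phi_f(x,0))<\infty$ --- the composition reduces to addition of time arguments inside $\beta(z,\cdot)$, so the general semigroup identity $\hat\beta(\hat\beta(y,t_1),t_2)=\hat\beta(y,t_1+t_2)$ follows immediately and in particular applies with $t_1=-u,\ t_2=t$.
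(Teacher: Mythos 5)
Your proposal is correct, but it takes a genuinely different route from the paper. The paper does not verify the three properties directly from the sup-formula; instead it shows that your $V$ coincides with the inf-based converse construction $\tilde V(x)=\inf_{y\in\mcl W(x)}y$ from Thm.~\ref{thm:LF_necessity} (one inequality via monotonicity of $\hat\beta(\cdot,-t)$ and the identity $\hat\beta(\beta(y,t),-t)=y$ applied to each $y\in\mcl W(x)$, the other by showing $V(x)\in\mcl W(x)$ via $\alpha_1(\phi_f(x,t))=\beta(\hat\beta(\alpha_1(\phi_f(x,t)),-t),t)\leq\beta(V(x),t)$), and then inherits the sandwich bound and the decay inequality from that theorem. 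Your direct verification — lower bound at $t=0$, upper bound via $\beta$-stability plus the forward-then-backward identity, and the dynamics bound via the flow semigroup, the change of variable $u=t+s$, the composition $\hat\beta(\hat\beta(y,-u),t)=\hat\beta(y,t-u)$, and monotonicity of $\beta(\cdot,t)$ — is sound and self-contained, and you correctly identify that the only delicate point is the backward-then-forward composition, which Lem.~\ref{lem:beta_negative_appx} does not state verbatim; your bridge through the explicit construction $\hat\beta_z(y,\tau)=\beta(z,\mu(z,y)+\tau)$ (valid because finiteness of $\hat\beta(\alpha_1(\phi_f(x,u)),-u)\le\alpha_2(x)$ lets you fix a sufficiently large $z$ and reduce everything to addition of time arguments) is legitimate — in fact the paper's own proof silently uses the same composition in the step $\alpha_1(\phi_f(x,t))=\beta(\hat\beta(\alpha_1(\phi_f(x,t)),-t),t)$, so your explicit treatment is if anything more careful. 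What the paper's route buys is the additional fact that the sup-form and inf-form converse Lyapunov functions agree (justifying Eqn.~\eqref{eq:V_sup} in the main text); what yours buys is independence from Thm.~\ref{thm:LF_necessity}. One small point to state explicitly in a write-up: for $t\geq T_f(x)$ (when $T_f(x)$ is finite) you have $\phi_f(x,t)=0$ by positive definiteness of $\alpha_1$, continuity, and $\beta(0,\cdot)\equiv 0$, so $V(\phi_f(x,t))=0\leq\beta(V(x),t)$ holds trivially and your substitution of the sup range $[0,T_f(x)-t)$ is only needed for $t<T_f(x)$.
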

\vspace*{2mm}
\begin{proof}
	To prove the result, we show that if the conditions of the lemma are satisfied, the converse Lyapunov function as defined in the proof of Eqn.~\eqref{eq:Vconverse} of Thm.~\ref{thm:LF_necessity} reduces to the $V(x)$ as defined in the lemma statement. This then implies as per Thm.~\ref{thm:LF_necessity} that the Lyapunov conditions of the lemma are satisfied.
Specifically, recall from Thm.~\ref{thm:LF_necessity} that if we define $\tilde{V}(x)=\inf_{y\in\mcl{W}(x)} y$ for $x\in G$, where
	\begin{equation*}
		\mcl{W}(x):=\bl\{y\in\R_{+}\mid \alpha_{1}(\phi_{f}(x,t))\leq\beta(y,t),~\forall t\in\R_{+}\br\},
	\end{equation*}
	then $\tilde{V}$ satisfies
	\begin{align*}
		\alpha_{1}(x)\leq \tilde{V}(x)&\leq \alpha_{2}(x),	&	&\forall x\in G,	\\
		\tilde{V}(\phi_{f}(x,t))&\leq \beta(\tilde{V}(x),t),		&	&\forall t\geq 0.
	\end{align*}
	To show that $\tilde{V}(x)=V(x)$ for all $x\in G$, note first that if $T_f(x)$ is finite, then $\phi_{f}(x,T_{f}(x)+t)=0\leq\beta(0,t)$ for all $t\geq 0$, and therefore $\alpha_{1}(\phi_{f}(x,t))\leq \beta(y,t)$ for all $y\in\R_{+}$ and $t\geq T_{f}(x)$ -- implying we may equivalently represent $\mcl{W}(x)$ as
	\begin{equation*}
		\mcl{W}(x)=\bl\{y\in\R_{+}\mid \alpha_{1}(\phi_{f}(x,t))\leq\beta(y,t),~\forall t\in[0,T_{f}(x))\br\}.
	\end{equation*}
	Alternatively, if $T_{f}(x)=\infty$, this identity holds by definition of $\mcl{W}(x)$.
	Now, define $\hat{\beta}_{z}:[0,z)\times\R\to[0,z]$ for $z>0$ as in Lemma~\ref{lem:beta_negative_appx}. By Lemma~\ref{lem:beta_negative_appx}~(ii) and~(iii), we then have $\hat{\beta}_{\alpha_{2}(x)+\delta}(\beta(y,t),-t)=y$ for all $y,t\geq 0$ for which $\beta(y,t)\in(0,\alpha_{2}(x)+\delta)$. Since $\alpha_{1}(\phi_{f}(x,t))\in(0,\alpha_{2}(x)] \subset (0,\alpha_{2}(x)+\delta)$ for all $x\in G\setminus\{0\}$ and $t\in [0,T_{f}(x))$, and $\hat{\beta}_{\alpha_{2}(x)+\delta}(y,t)$ is monotonically nondecreasing in $y$, it follows that for all $y\in \mcl{W}(x)\cap(0,\alpha_{2}(x)]$ and $t\in [0,T_{f}(x))$,
	\begin{equation*}
		\hat{\beta}_{\alpha_{2}(x)+\delta}\bl(\alpha_{1}(\phi_{f}(x,t)),-t\br)
		\leq \hat{\beta}_{\alpha_{2}(x)+\delta}\bl(\beta(y,t),-t\br)
		=y.
	\end{equation*}
	Taking the infimum over $y\in \mcl{W}(x)\cap(0,\alpha_{2}(x)]$, recalling that $\inf_{y\in\mcl{W}(x)}y=\tilde{V}(x)\leq\alpha_{2}(x)$, we find that for all $x\in G\setminus\{0\}$ and $t\in[0,T_{f}(x))$,
	\begin{equation*}
		\hat{\beta}_{\alpha_{2}(x)+\delta}\bl(\alpha_{1}(\phi_{f}(x,t)),-t\br)
		\leq \inf_{y\in\mcl{W}(x)\cap(0,\alpha_{2}(x)]}y
		=\tilde{V}(x),
	\end{equation*}
	and therefore
	\begin{equation*}
		V(x)=\sup_{t\in[0,T_{f}(x))}\hat{\beta}_{\alpha_{2}(x)+\delta}\bl(\alpha_{1}(\phi_{f}(x,t)),-t\br)
		\leq \tilde{V}(x).
	\end{equation*}
	On the other hand, for all $t\in[0,T_{f}(x))$, we also have
	\begin{equation*}
		\alpha_{1}(\phi_{f}(x,t))
		=\beta\bl(\hat{\beta}_{\alpha_{2}(x)+\delta}\bl(\alpha_{1}(\phi_{f}(x,t)),-t\br),t\br)
		\leq \beta\bl(V(x),t\br),
	\end{equation*}
	and thus $V(x)\in\mcl{W}(x)$. It follows that, for all $x\in G\setminus\{0\}$,
	\begin{equation*}
		V(x)
		\geq \inf_{y\in\mcl{W}(x)}y
		=\tilde{V}(x).
	\end{equation*}
	Since also $V(0)=0=\tilde{V}(0)$, we conclude that $V=\tilde{V}$, and hence as per Thm.~\ref{thm:LF_necessity}, the lemma holds.
\end{proof}

Lemma.~\ref{lem:V_sup_appx} shows that, if $\beta(y,t)$ is monotonically decreasing in $t$, $\beta$-stability of $f$ with respect to two measures can be certified with a Lyapunov function of the form in Eqn.~\eqref{eq:V_sup}.
In the following subsections, we will use this alternative converse Lyapunov construction to prove Thm.~\ref{thm:LF_necessity_diff_FT} and Thm.~\ref{thm:LF_necessity_diff}.

\subsection{Proof of Theorem.~\ref{thm:LF_necessity_diff_FT}}\label{appx:proofs:necessity_diff_FT}

Having established that, for $\beta$ monotonically decreasing, $\beta$-stability can be certified by a converse Lyapunov function of the form in Eqn.~\eqref{eq:V_sup}, the following theorem shows that if the settling time function is continuous, this converse Lyapunov function is continuous as well. By the converse comparison principle, this allows the Lyapunov condition $V(\phi_{f}(x,t))\leq \beta(V(x),t)$ in Thm.~\ref{thm:LF_necessity} to be expressed in terms of the upper Dini derivative of $V$, as $D_{t}^{+}V(\phi_{f}(x,t))|_{t=0}\leq \rho(V(x))$.

\textit{Theorem~\ref{thm:LF_necessity_diff_FT}:}
For $\,\Omega \subseteq \R^n$ and $f:\Omega\to\R^{n}$ with $f(0)=0$, let $G\subseteq\Omega$ be forward invariant, and $\alpha_{1},\alpha_{2}:G\to\R_{+}$ be continuous. For $\rho:\R_{+}\to\R$ continuous on $(0,\infty)$, let $\beta$ be the unique continuous function satisfying $\beta(y,0)=y$ and $\partial_{t}\beta(y,t)=\rho(\beta(y,t))$.
Suppose that $\alpha_{1}$ is positive definite, and $\beta(y,t)$ is monotonically decreasing in $t$ whenever $\beta(y,t)>0$. Finally, suppose that $T_{f}(x):=\sup\,\{t\geq 0\mid \alpha_{1}(\phi_{f}(x,t))>0\}$ (possibly infinite) is continuous for $x\in G\setminus\{0\}$.

If $f$ is $\beta$-stable on $G$ with respect to $\alpha_{1},\alpha_{2}$,
then there exists a continuous function $V:G\to\R_{+}$ which satisfies
\begin{align*}
	\alpha_{1}(x)\leq V(x)&\leq \alpha_{2}(x),	&	&	\\
	\dot{V}(x)&\leq \rho(V(x)),	&	&\forall x\in G, 
\end{align*}
where $\dot{V}(x):=D_{t}^{+}V(\phi_{f}(x,t))|_{t=0}$.

\begin{proof}
	Suppose that $f$ is $\beta$-stable on $G$. Then, for any $\delta>0$, define $V:G\to\R_{+}$ as $V(0):=0$ and
	\begin{equation*}
		V(x):=\sup_{t\in[0,T_{f}(x))}\hat{\beta}_{\alpha_{2}(x)+\delta}\bl(\alpha_{1}(\phi_{f}(x,t)),-t\br),
	\end{equation*}
	for $x\in G\setminus\{0\}$, where $\hat{\beta}_{z}$ is as in Lemma~\ref{lem:beta_negative_appx}. We have by Lemma~\ref{lem:V_sup_appx} that
	\begin{align*}
		\alpha_{1}(x)\leq V(x)&\leq\alpha_{2}(x),	&	&\forall x\in G,	\\
		V\bl(\phi_{f}(x,t)\br)&\leq \beta\bl(V(x),t\br),		&	&\forall t\geq 0.
	\end{align*}
	To show that $V$ is continuous, fix an arbitrary compact subset $K\subseteq G\setminus\{0\}$, and define $a:=\max_{x\in K}\alpha_{2}(x)+\delta$. Then, we have by Lemma~\ref{lem:beta_negative_appx}~(iv) that
	\begin{equation*}
		\hat \beta_{\alpha_{2}(x)+\delta}\bl(\alpha_{1}(\phi_{f}(x,t)),-t\br)
		=\hat \beta_{a}\bl(\alpha_{1}(\phi_{f}(x,t)),-t\br),
	\end{equation*}
	for all $x\in K$  and $t\in[0,T_{f}(x))$, and therefore
	\begin{equation*}
		V(x)=\sup_{t\in[0,T_{f}(x))}\!\!\hat{\beta}_{a}\bl(\alpha_{1}(\phi_{f}(x,t)),-t\br),\qquad \forall x\in K.
	\end{equation*}	
	Here, by joint continuity of $\phi_{f}$ and $\beta$, and continuity of $\alpha_{1}$, for any $x\in K$ and $\epsilon>0$ there exists $\delta_{1}$ such that $\max\{\norm{x-y}_{2},|s-t|\}<\delta_{1}$ implies
	\begin{equation*}
		\bbl|\hat{\beta}_{a}\bl(\alpha_{1}(\phi_{f}(x,s)),-s\br)
		-\hat{\beta}_{a}\bl(\alpha_{1}(\phi_{f}(y,t)),-t\br)\bbr|	
		<\epsilon,
	\end{equation*}
	for $y\in K$, $s<T_{f}(x)$ and $t<T_{f}(y)$. Now suppose without loss of generality that $T_f(x)\leq T_f(y)$. If $\norm{x-y}_{2}<\delta_{1}$, then
	\begin{align*}
		&\hat{\beta}_{a}\bl(\alpha_{1}(\phi_{f}(y,t)),-t\br)
		-V(x)	\\
		&\quad\leq \hat{\beta}_{a}\bl(\alpha_{1}(\phi_{f}(y,t)),-t\br)
		-\hat{\beta}_{a}\bl(\alpha_{1}(\phi_{f}(x,t)),-t\br)	
		<\epsilon,
	\end{align*}
	for all $t< T_{f}(x)$. If $T_f(x)$ is infinite, this implies $V(y)-V(x)<\epsilon$.
	If $T_{f}(x)$ is finite, then we also require the inequality to hold for $t \in [T_{f}(x),T_{f}(y))$. In this case, by continuity of $T_f$, there exists $\delta_{2}>0$ such that $\norm{x-y}_{2}<\delta_{2}$ implies $|T_{f}(y)-T_{f}(x)|<\delta_{1}$ for all $y\in K$. Hence $\norm{x-y}_{2}<\min\{\delta_{1},\delta_{2}\}$ implies
	\begin{align*}
		&\hat{\beta}_{a}\bl(\alpha_{1}(\phi_{f}(y,t)),-t\br)-V(x)	\\
		&\leq \hat{\beta}_{a}\bl(\alpha_{1}(\phi_{f}(y,t)),-t\br) 	\\
		&\enspace -\hat{\beta}_{a}\bl(\alpha_{1}(\phi_{f}(x,t-\min\{\delta_{1},t\})),-[t-\min\{\delta_{1},t\}]\br) <\epsilon,
	\end{align*}
	for all $t\in [T_{f}(x),T_{f}(y))$ --- likewise implying $V(y)-V(x)<\epsilon$.  By similar reasoning, we can show that $\norm{x-y}_{2}<\min\{\delta_{1},\delta_{2}\}$ also implies $V(x)-V(y)<\epsilon$.
	We conclude that $V$ is continuous on $K$. Since this holds for any compact subset $K\subseteq G\setminus\{0\}$, it follows that $V$ is continuous on $G\setminus\{0\}$. Continuity of $V$ at the origin follows from the fact that $V(x)\leq\alpha_{2}(x)$ and continuity of $\alpha_{2}(x)$, noting that $V(0):=0=\alpha_{2}(0)$. Finally, since $V\bl(\phi_{f}(x,t)\br)\leq \beta\bl(V(x),t\br)$, it follows by the converse comparison principle (Lemma~\ref{lem:comparison_N}) that $\dot{V}(x)\leq \rho(V(x))$ for all $x\in G$.
\end{proof}

\subsection{Proof of Theorem.~\ref{thm:LF_necessity_diff}}\label{appx:proofs:necessity_diff}

Although Thm.~\ref{thm:LF_necessity_diff_FT} shows that, for $\beta$ monotonically decreasing, $\beta$-stability implies existence of a continuous Lyapunov function satisfying $\dot{V}(x):=D_{t}^{+}V(\phi_{f}(x,t))|_{t=0}\leq\rho(V(x))$, testing this condition still requires knowledge of the solution map, $\phi_{f}(x,t)$. Therefore, the following lemma provides a necessary Lyapunov condition which can be tested without knowledge of the solution map, showing that if $f$ and $\beta$ are also locally Lipschitz continuous, then for any $\epsilon\in(0,1)$, there exists a locally Lipschitz Lyapunov function that satisfies the weakened condition $\nabla V(x)^T f(x)\leq (1-\epsilon)\rho(V(x))$.

\textit{Theorem~\ref{thm:LF_necessity_diff}:}
For $\,\Omega \subseteq \R^n$ and $f:\Omega\to\R^{n}$ with $f(0)=0$, let $G\subseteq\Omega$ be forward invariant, and $\alpha_{1},\alpha_{2}:G\to\R_{+}$ be continuous. For $\rho:\R_{+}\to\R$ continuous on $(0,\infty)$, let $\beta$ be the unique continuous function satisfying $\beta(y,0)=y$ and $\partial_{t}\beta(y,t)=\rho(\beta(y,t))$.
Suppose further that $\alpha_{1}$ is positive definite and coercive, $\beta(y,t)$ is monotonically decreasing in $t$ for $y>0$, and that $f$, $\alpha_{1}$, and $\beta$ are locally Lipschitz continuous.

If $f$ is $\beta$-stable on $G$ with respect to $\alpha_{1},\alpha_{2}$,
then for every $\epsilon\in(0,1)$ there exists a continuous function $V_{\epsilon}:G\to\R_{+}$ which is locally Lipschitz continuous and differentiable almost everywhere on $G\setminus\{0\}$ and satisfies
\begin{align*}
	\alpha_{1}(x)\leq V_{\epsilon}(x)&\leq \alpha_{2}(x),	&	&\forall x\in G,	\\
	\nabla V_{\epsilon}(x)^T f(x)&\leq (1-\epsilon)\rho(V_{\epsilon}(x)),	&	&\text{for a.e. } x\in G.
\end{align*}%
\begin{proof}	
	Suppose $f$ is $\beta$-stable on $G$,
	and fix arbitrary $\epsilon\in(0,1)$. Since $\beta(y,t)$ is decreasing in $t$, it follows that $f$ is also $\tilde{\beta}$-stable on $G$ with $\tilde{\beta}(y,t)=\beta(y,[1-\epsilon]t)$. Defining $V_{\epsilon}:G\to\R_{+}$ as $V_{\epsilon}(0):=0$ and (for any $\delta>0$)
	\begin{equation*}
		V_{\epsilon}(x):=\sup_{t\in[0,T_{f}(x))}\hat{\beta}_{\alpha_{2}(x)+\delta}\bl(\alpha_{1}(\phi_{f}(x,t)),-[1-\epsilon]t\br),
	\end{equation*}
	for $x\in G\setminus\{0\}$,
	where $\hat{\beta}_{z}$ is as in Lemma~\ref{lem:beta_negative_appx} and $T_{f}(x):=\sup\{t\geq 0\mid \alpha_{1}(\phi_{f}(x,t))>0\}$, 
	we then have by Thm.~\ref{thm:LF_necessity_diff_FT} that for all $x\in G$,
	\begin{align*}
		\alpha_{1}(x)\leq V_{\epsilon}(x)&\leq\alpha_{2}(x),	&	&\text{and}	&
		\dot{V}_{\epsilon}(x)&\leq (1-\epsilon)\rho\bl(V_{\epsilon}(x)\br),		
	\end{align*}
	where $\dot{V}_{\epsilon}(x):=D_{t}^{+}V(\phi_{f}(x,t))|_{t=0}$. If $V_{\epsilon}$ is locally Lipschitz continuous on $G\setminus\{0\}$, it follows by Rademacher's theorem that $V_{\epsilon}(x)$ is differentiable for almost every $x\in G$, and hence satisfies $\nabla V_{\epsilon}(x)^T f(x)=\dot{V}_{\epsilon}(x)\leq (1-\epsilon)\rho(V_{\epsilon}(x))$ for all those $x$. It remains only to prove, therefore, that $V_{\epsilon}$ is locally Lipschitz continuous on $G\setminus\{0\}$.

	First, since $f$ is locally Lipschitz continuous at $0$, and $\alpha_{1}$ is positive definite, $T_{f}(x)=\infty$ for $x \neq 0$ (vector fields which are Lipschitz at $0$ cannot be finite-time stable), whence 
	\begin{equation*}
		V_{\epsilon}(x):=\sup_{t\in[0,\infty)} \hat{\beta}_{\alpha_{2}(x)+\delta}\bl(\alpha_{1}(\phi_{f}(x,t)),-[1-\epsilon]t\br).
	\end{equation*}
	Now, to show that $V_{\epsilon}$ is locally Lipschitz continuous on $G\setminus\{0\}$, we first prove that the supremum defining $V_{\epsilon}$ is achieved in finite time. For this, note that by $\beta$-stability of $f$, we have $\alpha_{1}(\phi_{f}(x,t))\leq\beta(\alpha_{2}(x),t)$, and therefore (using  Lemma~\ref{lem:beta_negative_appx})
	\begin{align*}
		&\hat{\beta}_{\alpha_{2}(x)+\delta}\bl(\alpha_{1}(\phi_{f}(x,t)),-[1-\epsilon]t\br)\\
		&\qquad\leq \hat{\beta}_{\alpha_{2}(x)+\delta}\bl(\beta(\alpha_{2}(x),t),-[1-\epsilon]t\br)	
		=\beta\bl(\alpha_{2}(x),\epsilon t\br),
	\end{align*}
	for all $x\in G\setminus\{0\}$ and $t\geq 0$. As in the proof of Lemma~\ref{lem:beta_negative_appx}, since $\beta(y,t)$ is monotonically decreasing in $t$, for any $z>0$, $\beta(z,\cdot)$ has a monotonically decreasing inverse, $\mu(z,\cdot)$, whence for any $x\in G\setminus\{0\}$ there exists a time $\tau(x)=\frac{1}{\epsilon}\mu(\alpha_{2}(x),\alpha_{1}(x))>0$ such that $\beta\bl(\alpha_{2}(x),\epsilon \tau(x)\br)=\alpha_{1}(x)$. Continuity of $\tau(x)$ follows from continuity of $\beta$, $\alpha_{1}$, and $\alpha_{2}$. Furthermore, for $t\geq\tau(x)$, we then have
	\begin{align*}
		&\hat{\beta}_{\alpha_{2}(x)+\delta}\bl(\alpha_{1}(\phi_{f}(x,t)),-[1-\epsilon]t\br)
		\leq\beta\bl(\alpha_{2}(x),\epsilon t\br)	\\
		&\hspace*{0.75cm}\leq \alpha_{1}(x)
		\leq \sup_{t\in[0,\tau(x)]}\hat{\beta}_{\alpha_{2}(x)+\delta}\bl(\alpha_{1}(\phi_{f}(x,t)),-[1-\epsilon]t\br),
	\end{align*}
which implies that
	\begin{equation*}
		V_{\epsilon}(x)
		=\sup_{t\in[0,\tau(x)]}\hat{\beta}_{\alpha_{2}(x)+\delta}\bl(\alpha_{1}(\phi_{f}(x,t)),-[1-\epsilon]t\br).
	\end{equation*}
 We now further simplify $V_\epsilon$ by restriction to a compact set. Specifically, for any compact set $K\subseteq G\setminus\{0\}$, define $\tau_{K}:=\max_{x\in K}\tau(x)<\infty$. Then, for any $x\in K$, we have
	\begin{equation*}
V_{\epsilon}(x)=\sup_{t\in[0,\tau_{K}]}\hat{\beta}_{\alpha_{2}(x)+\delta}\bl(\alpha_{1}(\phi_{f}(x,t)),-[1-\epsilon]t\br).
	\end{equation*}
	Furthermore, defining $a:=\max_{x\in K}\alpha_{2}(x)+\delta$, we have by Lemma~\ref{lem:beta_negative_appx}~(iv) that $\beta_{\alpha_{2}(x)+\delta}(y,t)=\beta_{a}(y,t)$ for all $y\in(0,\alpha_{2}(x)]$ and $x\in K$.
	Since $\alpha_{1}(\phi_{f}(x,t))\in(0,\alpha_{2}(x)]$ for all $t\in[0,\tau_{K}]$, we can further simplify $V_{\epsilon}$ as
	\begin{equation*}
		V_{\epsilon}(x)=\sup_{t\in[0,\tau_{K}]}\hat{\beta}_{a}\bl(\alpha_{1}(\phi_{f}(x,t)),-[1-\epsilon]t\br).
	\end{equation*}
Having obtained this simplified version of $V_{\epsilon}$ on a compact set, $K$, we now establish local Lipschitz continuity of $V_{\epsilon}$ using Lipschitz continuity of $\phi_f, \alpha$ and $\beta$.

First, define $c:=\min_{x\in K,t\in[0,\tau_{K}]}\alpha_{1}(\phi_{f}(x,t))$, where the minimum exists by compactness of $K$ and continuity of $\phi_{f}$ and $\alpha_1$. Since $0\notin K$ and $\alpha_{1}$ is positive definite, we have $\alpha_{1}(\phi_{f}(x,t))>0$ for all $x\in K$ and $t\in[0,\tau_K]$, whence $c>0$. Since $\alpha_{1}$ is continuous and $\alpha_{1}(0)=0$, there then exists $\delta_{1}>0$ such that $\norm{x}_{2}\leq\delta_{1}$ implies $\alpha_{1}(x)< c$.
Similarly, since $\alpha_{1}$ is coercive and continuous, there exists $\delta_{2}>0$ such that $\norm{x}_{2}\geq \delta_{2}$ implies $\alpha(x)> a$. Since $0<\alpha_{1}(\phi_{f}(x,t))<a$ for all $x\in K$ and $t\in[0,\tau_{K}]$, it follows that $\delta_{1}<\norm{\phi_{f}(x,t)}_{2}<\delta_{2}$ for all $x\in K$ and $t\in[0,\tau_{K}]$.

Now, since $f$ is locally Lipschitz continuous, $f$ is uniformly Lipschitz continuous on $U:=\{x\in G\mid \delta_{1}<\norm{x}_{2}<\delta_{2}\}$, and it follows (by e.g. Thm.~3.4 in~\cite{khalil2002nonlinear}) that there exists $L_{1}\geq 0$ such that
	\begin{equation*}
		\norm{\phi_{f}(x_{1},t)-\phi_{f}(x_{2},t)}_{2}\leq L_{1}\norm{x_{1}-x_{2}}_{2}, 	
	\end{equation*}
	for all $x_{1},x_{2}\in K$ and $t\in[0,\tau_{K}]$.

Having established a Lipschitz bound on $\phi_{f}$, Lipschitz continuity of $\alpha_{1}$ implies a similar bound $L_{2}\geq 0$ where
	\begin{equation}\label{eq:proof:Lipschitz2}
		\bl|\alpha_{1}\bl(\phi_{f}(x_{1},t)\br)-\alpha_{1}\bl(\phi_{f}(x_{2},t)\br)\br|\leq L_{2}\norm{x_{1}-x_{2}}_{2},		\tag{$\star$}
	\end{equation}
	for all $x_{1},x_{2}\in K$ and $t\in[0,\tau_{K}]$. As before, $0 \not \in K$ with $K$ compact imply existence of $0<\gamma_1,\gamma_2<a$ such that $\gamma_1 \le \alpha_{1}(\phi_{f}(x,t))\le \gamma_2$ for all $x\in K$ and $t\in[0,\tau_{K}]$


Finally, since $\beta$ is locally Lipschitz continuous, $\beta(y,t)$ is uniformly Lipschitz continuous in $y\in[\gamma_1,\gamma_2]$ for any $t\in[0,\tau_{K}]$, and therefore $\hat{\beta}_{a}(y,-[1-\epsilon]t)$ is uniformly Lipschitz continuous in $y\in[\gamma_1,\gamma_2]$ for $t\in[0,\tau_{K}]$. Combining with~\eqref{eq:proof:Lipschitz2}, we conclude the existence of $L\geq 0$ such that
	\begin{align*}
		&\bl|\hat{\beta}_{a}\bl(\alpha_{1}\bl(\phi_{f}(x_{1},t)\br),-[1-\epsilon]t\br)-\hat{\beta}_{a}\bl(\alpha_{2}\bl(\phi_{f}(x_{2},t)\br),-[1-\epsilon]t\br)\br|\\
		&\qquad\leq L\norm{x_{1}-x_{2}}_{2},
	\end{align*}
	for all $x_{1},x_{2}\in K$ and $t\in[0,\tau_{K}]$.
	It follows that
	\begin{align*}
		&\hat{\beta}_{a}\bl(\alpha_{1}(\phi_{f}(x_{2},t)),-[1-\epsilon]t\br) \\
		&\qquad \leq \hat{\beta}_{a}\bl(\alpha_{1}(\phi_{f}(x_{1},t)),-[1-\epsilon]t\br) +L\norm{x_{1}-x_{2}}_{2}	\\
		&\qquad \leq V_{\epsilon}(x_{1})+L\norm{x_{1}-x_{2}}_{2},
	\end{align*}
for all $x_{1},x_{2}\in K$ and $t\in[0,\tau_{K}]$, and therefore
	\begin{align*}
		V_{\epsilon}(x_{2})&=\sup_{t\in[0,\tau_{K}]}\hat{\beta}_{a}\bl(\alpha_{1}(\phi_{f}(x_{2},t)),-[1-\epsilon]t\br)	\\
		&\leq  V_{\epsilon}(x_{1})+L\norm{x_{1}-x_{2}}_{2}.
	\end{align*}
Since $x_1,x_2$ are arbitrary, we likewise have $V_{\epsilon}(x_{1})\leq  V_{\epsilon}(x_{2})+L\norm{x_{1}-x_{2}}_{2}$. Hence
\[
|V_{\epsilon}(x_{1})-V_{\epsilon}(x_{2})|\le L\norm{x_{1}-x_{2}}_{2},\qquad \forall x_{1},x_{2}\in K.
\]
Since $K\subseteq G\setminus\{0\}$ is an arbitrary compact set, we conclude that $V_{\epsilon}$ is locally Lipschitz continuous on $G\setminus\{0\}$.
\end{proof}

\end{appendices}

\end{document}